\newcounter{citedtheorems}
\newtheorem{defn}{Definition}[section]
\newtheorem{theorem}[defn]{Theorem}
\newtheorem*{theorem-m}{Theorem \ref{main-theorem}}
\newtheorem*{thm-p2a}{Theorem \ref{t:p2a}}
\newtheorem*{thm-seq}{Theorem \ref{t:seq}}
\newtheorem*{thm-e}{Theorem}
\newtheorem*{thm-m}{Main Theorem}
\newtheorem*{theorem-abs1}{Theorem \ref{ind-theorem}}
\newtheorem*{theorem-abs2}{Theorem \ref{a23}}
\newtheorem*{theorem-abs3}{Theorem \ref{ind-new}}
\newtheorem*{theorem-abs4}{Theorem \ref{m1}}
\newtheorem*{thm-x}{Theorem}
\newtheorem{thm-lit}[citedtheorems]{Theorem}
\newtheorem{defn-lit}[citedtheorems]{Definition}
\newtheorem{fact-lit}[citedtheorems]{Fact}
\newtheorem{fact}[defn]{Fact}
\newtheorem{defn-claim}[defn]{Definition/Claim}
\newtheorem*{defn-in}{Definition \arabic{section}.\arabic{equation}}
\newtheorem*{claim-in}{Claim \arabic{section}.\arabic{equation}}
\newtheorem{concl}[defn]{Conclusion}
\newtheorem{conv}[defn]{Convention}
\newtheorem{claim}[defn]{Claim}
\newtheorem{lemma}[defn]{Lemma}
\newtheorem{obs}[defn]{Observation}
\newtheorem{rmk}[defn]{Remark}
\newtheorem{ntn}[defn]{Notation}
\newtheorem{disc}[defn]{Discussion}
\newtheorem{qst}[defn]{Question}
\newcommand{\lost}{\L os' }
\newcommand{\los}{\L os }
\newcommand{\br}{\vspace{2mm}}
\newcommand{\bbr}{\vspace{3mm}}
\newcommand{\sbr}{\vspace{1mm}}
\newcommand{\mfa}{\mathfrak{a}}
\newcommand{\mfb}{\mathfrak{b}}
\newcommand{\bM}{\mathbf{M}}
\newcommand{\vt}{\vartheta}
\newcommand{\kim}{(\kappa, \mci, \bar{m})}
\newcommand{\tfeq}{T_{\operatorname{feq}}}
\newcommand{\mk}{\mathbf{k}}
\newcommand{\mcq}{\cQ}
\newcommand{\mcp}{\cP}
\newcommand{\lex}{\operatorname{lex}}
\newcommand{\bp}{\leq_{\operatorname{proj}}}
\newcommand{\proj}{\operatorname{proj}}
\newcommand{\gee}{\mathcal{G}}
\newcommand{\ml}{\mathcal{L}}
\newcommand{\tlf}{\trianglelefteq}
\newcommand{\rn}{\operatorname{range}}
\newcommand{\dom}{\operatorname{dom}}
\newcommand{\uu}{\mathcal{U}}
\newcommand{{\xw}}{\mathbf{w}}
\newcommand{\vv}{\mathcal{V}}
\newcommand{\cP}{\mathcal{P}}
\newcommand{\cQ}{\mathcal{Q}}
\newcommand{\del}{\partial}
\newcolumntype{L}{>{\centering\arraybackslash}m{4cm}}
\newcommand{\AP}{\operatorname{AP}}
\newcommand{\xn}{\mathfrak{n}}
\newcommand{\bk}{\mathbf{k}}
\newcommand{\ii}{\mathbf{i}}
\newcommand{\cl}{\operatorname{cl}}
\newcommand{\mct}{\mathcal{T}}
\newcommand{\lgn}{\operatorname{lg}}
\newcommand{\bn}{\mathbf{n}}
\newcommand{\mcm}{\mathcal{M}}
\newcommand{\mcn}{\mathcal{N}}
\newcommand{\mcy}{\mathcal{Y}}
\newcommand{\de}{\mathcal{D}}
\newcommand{\trv}{\mathbf{t}}
\newcommand{\jj}{\mathbf{j}}
\newcommand{\ba}{\mathfrak{B}}
\newcommand{\ee}{\mathcal{E}}
\newcommand{\mch}{\mathcal{H}}
\newcommand{\rstr}{\upharpoonright}
\newcommand{\mci}{\mathcal{I}}
\newcommand{\mcs}{\mathcal{S}}
\newcommand{\vp}{\varphi}
\newcommand{\ma}{\mathbf{a}}
\newcommand{\mb}{\mathbf{b}}
\newcommand{\mc}{\mathbf{c}}
\newcommand{\mx}{\mathbf{x}}
\newcommand{\my}{\mathbf{y}}
\newcommand{\mz}{\mathbf{z}}
\newcommand{\fin}{\operatorname{FI}}
\newcommand{\trg}{T_{\mathbf{rg}}}
\newcommand{\xm}{\mathfrak{m}}
\title[Some simple theories from a Boolean algebra point of view.]{Some simple theories from \\ a boolean algebra point of view}
\author{M. Malliaris and S. Shelah}
\thanks{ \emph{Thanks}. 
Research partially supported by NSF CAREER 1553653, NSF 2051825, BSF 3013005232, and ISF grant 1838/19. 
This is paper 1218 in Shelah's list. 
}
\address{Department of Mathematics, University of Chicago, 5734 S. University, Chicago, IL 60637, USA} 
\email{mem@math.uchicago.edu}
\address{Einstein Institute of Mathematics, Edmond J. Safra Campus, Givat Ram, The Hebrew
University of Jerusalem, Jerusalem, 91904, Israel, and Department of Mathematics,
Hill Center - Busch Campus, Rutgers, The State University of New Jersey, 110
Frelinghuysen Road, Piscataway, NJ 08854-8019 USA}
\email{shelah@math.huji.ac.il}
\urladdr{http://shelah.logic.at}
\begin{document}

\begin{abstract}  
We find a strong separation between two natural families of simple rank one theories in Keisler's order: the theories $T_\xm$ reflecting 
graph sequences, which witness that Keisler's order has the maximum number of classes, and the theories 
$T_{n,k}$, which are the higher-order analogues of the triangle-free random graph.  
The proof involves building Boolean algebras and ultrafilters ``by hand'' to satisfy 
certain model theoretically meaningful chain conditions. 
This may be seen as advancing a line of work going back through Kunen's construction of good ultrafilters in ZFC using families of 
independent functions.  We conclude with a theorem on flexible ultrafilters, and open questions.  
\end{abstract}

\maketitle

\vspace{5mm}

This paper is dedicated to Ken Kunen.   His contributions to set theory and general topology are many, momentous and deep. 

Our general topic is the structure of Keisler's order on certain families of simple rank one theories.  Keisler's order 
(Definition \ref{d:keisler} below) is a large-scale  
classification program in model theory which builds a framework for comparing the complexity of complete, countable 
theories via saturation of regular ultrapowers.  The natural relation of this work to Kunen's is discussed in \S \ref{s:kunen}.

To motivate our main results here: recently in \cite{MiSh:1167}, we finally proved that Keisler's order has the maximum number of classes, continuum many, already among the 
simple unstable theories.  That paper represented a significant shift in our understanding. It involved not only finding new theories, called $T_\xm$ (the $\xm$ is a parameter, see \S \ref{s:theories} below), but also finding a new method, of building ultrafilters and Boolean algebras together.  This raised two very natural questions.

\begin{qst} \label{q0.1}
Are the theories $T_\xm$ below all earlier nonminimal unstable theories? 
\end{qst}

Recall that there is a minimum unstable class in Keisler's order, which is the class of the random graph (see \cite{mm4}, 5.3). 

\begin{qst} \label{q0.2}
Is the new method of building ultrafilters and Boolean algebras together good just for the specific theories $T_\xm$, or is it a general method? 
\end{qst}

The present paper addresses both questions, though perhaps in a way that opens rather than closes the matter.  We prove that the theories $T_\xm$ are incomparable with the theories $T_{n,k}$, the higher-order versions of the triangle-free random graph, studied by Hrushovski in \cite{h:letter}.  These had essentially been the theories known to be ``near'' the random graph, following \cite{MiSh:1050}. So the new results, Theorems \ref{t:first} and \ref{t:dir2}, show the picture is more multifaceted.  
We do this by means of the new method, which is very encouraging for the second question.  
Finally, since these proofs suggest that indeed, non-free Boolean algebras will be central going forwards, we show that a certain very useful theorem which we had proved earlier for free Boolean algebras works in general, in \S \ref{s:flex-cc}; it is especially nice to include here because of the connection to Kunen. 

The mathematical content of the paper is as follows. 
In \S \ref{s:theories}, we present the two kinds of theories studied in this paper: the theories $T_\xm$ which can ``encode finite combinatorics'' and the theories $T_{n,k}$.  
(These are both families of theories; in this description, we sometimes refer to them in the singular when stating results which apply to all.) In \S \ref{s:prelim} we collect the definitions on ultrafilters, Boolean algebras and separation of variables needed for the proofs.  
In \S \ref{s:first-s} we fix $\bn > \bk \geq 2$ and prove existence of a regular ultrafilter which is good for any $T_\xm$ and
 satisfies a model theoretically interesting chain condition depending on $\bn, \bk$. In \S \ref{s:n-s} we prove that this chain condition is sufficient 
 to block saturation for $T_{\bn, \bk}$.   Together these yield Theorem \ref{t:first}, which says $T_{n,k}$ is not below $T_\xm$ in Keisler's order. 
 In \S \ref{s:second} we prove that it is possible to saturate $T_{n,k}$ while not saturating $T_\xm$. 
This gives Theorem \ref{t:dir2} which says that the $T_{\xm}$ are not below $T_{n,k}$ in Keisler's order. (Thus, combining the two stated 
theorems we have incomparability.)  
In \S \ref{s:flex-cc} we show that no regular ultrafilter built by separation of variables where the Boolean algebra $\ba$
has the $\mu$-c.c. for some regular uncountable $\mu < \lambda$ can be flexible; this gives the definitive statement that 
using a small c.c. can block flexibility, a result proved in various forms (e.g. for free Boolean algebras) in earlier theorems.  
The paper concludes with some open problems.  

We thank the anonymous referee for many helpful comments. 

\section{Some relations to Kunen's work} \label{s:kunen} 

In the sixties it had been famously asked whether there was an 
``outside'' or ``algebraic'' characterization of elementarily equivalence. Keisler answered this question, assuming a case of GCH, by proving that 
$M \equiv N$ if and only if $M, N$ have isomorphic ultrapowers. (This is usually called the Keisler-Shelah isomorphism theorem; 
GCH was later removed by Shelah, but we follow the narrative of 
Keisler's proof.)  Keisler first proved that on any infinite $\lambda$ it was possible to build a so-called \emph{good} regular 
ultrafilter on $\lambda$, assuming $2^\lambda = \lambda^+$. 
He then proved that if $\de$ is a good regular ultrafilter on $\lambda$, and $\lambda \geq |\ml|$, 
the ultrapower $M^\lambda/\de$ is $\lambda^+$-saturated. The equivalence then goes as follows.   Given $M \equiv N$, without loss of generality infinite, choose $\lambda \geq |M| + |N| + |\ml|$ and let $\de$ be a good regular ultrafilter on $\lambda$. Since $\de$ is regular, we know that the 
ultrapowers have the full size of the Cartesian power, so $|M^\lambda/\de| = |N^\lambda/\de| = 2^\lambda$. Since $\de$ is good, 
both ultrapowers are $\lambda^+$-saturated. Again using the assumption of GCH, $2^\lambda = \lambda^+$ so both ultrapowers are elementarily equivalent saturated models of the same size and are therefore isomorphic. 

Kunen then gave a new proof, in ZFC, of the existence of good regular ultrafilters on any infinite cardinal $\lambda$.  
(Keisler had used one instance of GCH in constructing a good ultrafilter on $\lambda$, assuming $2^\lambda = \lambda^+$.)  
We may briefly motivate it as follows.  Suppose we decide to build a regular ultrafilter on $\lambda$ by 
induction. We might start by adding a regularizing family to the initial filter $F_{0}$, as regularity is an existential condition. We might then enumerate $\mcp(\lambda)$ as $\langle X_\beta : \beta < 2^\lambda \rangle$ and try to  build by induction on $\alpha$ an increasing continuous chain of filters so that 
if $\alpha = \beta +1$, $F_\alpha$ contains either $X_\beta$ or its complement, eventually arriving to an ultrafilter $F = F_{2^\lambda}$. 
But we need to make the ultrafilter good, that is, for every decreasing sequence $\bar{A} = \langle A_u : u \in [\lambda]^{<\aleph_0} \rangle$ of members of 
$F$ we have to add a multiplicative refinement (a notion introduced by Keisler for capturing when ultraproducts with be $\lambda^+$-saturated, see \S \ref{s:goodness} below). 
So we can list all the potential possible such $\bar{A}$'s as $\langle \bar{A}^\alpha : \alpha < 2^\lambda \rangle$ and we would like to handle $A^\alpha$ at 
stage $\alpha$. As long as $\alpha < \lambda^+$, Keisler's proof works. But e.g. for $\alpha \geq \lambda^+$ maybe $F_\alpha$ generates an ultrafilter so 
we have no freedom left -- an extreme case, however, a possible one.  Kunen's idea was to restrict the construction so that at all stages $\alpha < 2^\lambda$, many decisions have not been made and we have enough freedom. The right notion was expressed in the language of Engelking-Karlowicz \cite{ek} as follows 
(see also earlier work of Hewitt, Marczewski, Pondiczery \cite{hewitt}, \cite{marczewski}, \cite{pondiczery}, 
Fichtenholz, Kantorovich, Hausdorff \cite{f-k}, \cite{hausdorff} as well as work of Cater-Erd\H{o}s-Galvin \cite{CEG} and Comfort-Negrepontis \cite{cn1}, \cite{cn2}). 
Identifying each set with its characteristic function it becomes a 
function from $\lambda$ to $\{ 0, 1 \}$.  Call a family of functions $\gee = \{ g_\beta : \beta < \kappa \} 
\subseteq {^\lambda \lambda}$ \emph{independent} if for any 
$n < \omega$ and any $\beta_0, \dots, \beta_{n-1} < \kappa $ and any $t_0, \dots, t_{n-1}$ from $\rn(\gee)$ we have that 
$\{ i \in \lambda : \beta_0(i) = t_0 \land \cdots \land \beta_{n-1}(i) = t_{n-1} \} \neq \emptyset$.  
Then, for a filter $D$ on $\lambda$, call a family of functions 
``independent mod D'' if ``$\neq \emptyset$'' can be replaced by ``$\neq \emptyset \mod D$''.  Now we are equipped to build an increasing 
continuous chain of filters $\de_\alpha$ and a decreasing continuous chain of independent families $G_\alpha$ (independent mod $\de_\alpha$) 
at each step making decisions about some finitely many functions from our independent family while maintaining 
the hypothesis that the remainder of the functions stay independent mod the new filter.

In later work, additional points become important,  
such as $\de_\alpha$ being maximal modulo which the remaining family is independent, or the significance of the cofinality of the construction; 
for a full treatment, incorporating subsequent advances over the following decade, see \cite{Sh:c} chapter VI. 

Returning to the isomorphism theorem, one can also see there a strong motivation for Keisler's order. 
Suppose we simply consider the problem of producing regular 
ultrapowers $M^\lambda/\de$ which are $\lambda^+$-saturated, ignoring the final invocation of GCH. 
For some theories, such as algebraically closed fields (or any other uncountably 
categorical theory), the assumption that $\de$ is good is not really needed to produce saturation (by which we shall mean $\lambda^+$-saturation); 
any regular ultrafilter will do. For other theories 
(such as number theory), as observed by Keisler in \cite{keisler}, ``good'' is necessary because for these theories 
any failure of goodness can be coded as a failure of saturation. This leads to the natural question of comparing theories according to whether 
any regular ultrafilter $\de$ which produces saturated ultrapowers of models of $T_2$ also produces saturated ultrapowers of models of 
$T_1$: if so, write $T_1 \tlf T_2$. (One additional ingredient, also due to Keisler \cite{keisler}, 
is that the $\lambda^+$-saturation of the ultrapower is an invariant of the theory, so independent of the choice of model $M$, 
when e.g. $T$ is countable and $\de$ is regular on $\lambda$.) 

\br
The theorems in the present paper grew out of a series of works which have been gradually shifting our understanding 
of the mechanisms of the order (for more details, see \S \ref{s:prelim}). 
One early move, called ``separation of variables'' in \cite{MiSh:999}, was the idea that one can build a regular ultrafilter 
to handle saturation in two stages. 
Essentially, one first chooses any Boolean algebra $\ba$ of size $\leq 2^\lambda$ with maximal antichains of size $\leq \lambda$. 
Then one can build a regular good filter $\de_0$ so that $\mcp(\lambda)/\de_0 \cong \ba$. The core of the problem then shifts to 
a problem of building an 
ultrafilter $\de$ on $\ba$, handling the various images of saturation problems as they appear in $\ba$. Finally, $\de$ 
then combines with $\de_0$ in the natural way to give the final regular ultrafilter on $\lambda$.  
The content of this theorem is that first, we can arrange for the quotient to be isomorphic to a Boolean algebra essentially of our choosing, liberating us 
from simply working on the set of subsets of $\lambda$; and second, that we can do the core work on this quotient 
without losing the needed level of resolution.   

Very recently, this has allowed us to start to see a closer relationship between the 
``set theoretic'' regular ultrafilters and the ``model theoretic'' ultrafilters (i.e. the types we try to realize or omit). In the present work this connection is 
explored and developed through the intermediary of chain conditions having both model theoretic and set theoretic content. The results 
suggest there is much more to say.

\setcounter{tocdepth}{1}
\tableofcontents

\section{Two families of theories} \label{s:theories}

We shall consider two families of theories: the theories $T_\xm$ from our paper \cite{MiSh:1167}, and the theories $T_{n,k}$ studied by 
Hrushovski \cite{h:letter} (see the Appendix there) which later played a key role in \cite{MiSh:1050} for $n=k+1$.  We briefly review both here, starting with the second.

\subsection*{Random hypergraphs with a forbidden configuration.} 
Recall that the (theory of the) model-theoretic random graph is the model completion of the theory of graphs with a symmetric irrreflexive edge relation. 
That is, the language has a single binary relation $R$,\footnote{All our languages 
are assumed to have equality.} and there are axioms saying $R$ is symmetric and irreflexive, that there are infinitely many elements, and  
for each finite $m$ there is an axiom saying that given any two disjoint sets each with $m$ elements, 
there is an $x$ connected to every element in the first set and to no element in the second set. 

So as to minimize subtraction, it will be convenient to write $T_{n,k}$ for the model completion of 
the theory of $(k+1)$-uniform hypergraphs 
in which there do not exist 
$(n+1$) distinct elements of which every $(k+1)$ form an $R$-hyperedge. 
That is:

\begin{defn} For each $n>k\geq 2$, let $T_{n,k}$ be the theory in the language with a $(k+1)$-place relation $R$ 
and axioms saying 
$R$ is symmetric and irreflexive, there are infinitely many elements, and 
 for each finite $m$ there is an axiom saying that given any two disjoint sets each with $m$-many $k$-tuples, 
 there is an $x$ connected to every element in the first set and to no element in the second set 
\emph{if and only if} such an $x$ would not cause the forbidden configuration.\footnote{Call a set $A$ of $n$ elements 
\emph{pre-forbidden} if $R$ holds on every $(k+1)$-tuple of distinct elements of $A$. The axiom for $m = \binom{n}{k}$ amounts to saying that 
$x$ exists \emph{unless} there are $n$ elements forming a pre-forbidden set such that every $k$ of these elements 
occur, under some permutation, as one of the $k$-tuples in the first set. The axioms for $m>\binom{n}{k}$ can defer to the earlier one.}
\end{defn}
Model theorists call $T_{n,k}$ a ``generic'' or ``random'' [we point this common usage out because it may be strange to combinatorialists] ``$(n+1)$-free $(k+1)$-hypergraph,'' e.g. 
$T_{3,2}$ is a random tetrahedron-free three-hypergraph.  
 Remarkably, for $n > k \geq 2$, $T_{n,k}$ is simple rank one, with no forking other than equality; this was proved by Hrushovski \cite{h:letter}. 
 A recent exposition is \cite[Fact 5.14]{MiSh:1149}.

\subsection*{Random graphs over sparse sequences.}  The theories $T_\xm$ are based on random graphs, not hypergraphs.\footnote{An  
extension of the theories to hypergraph sequences was very recently carried out in \cite{MiSh:1206}; however, an analysis of these theories in Keisler's order is not clear.} 
Following \cite[\S 3]{MiSh:1167}, let us 
motivate them as follows by defining some approximations $T^0_{\xm, n}$, $n = 0, 1$. 

\br
\begin{itemize}
\item $T_{\xm, 0}$: Suppose the language has unary predicates $\{ \mcq, \mcp \}$ and a binary relation $R$. Consider universal 
axioms saying that $\mcq, \mcp$ partition the domain and that $R \subseteq \mcq \times \mcp$. 
\\ In (a model $M$ of) the model completion, 
$\mcq^M$ and $\mcp^M$ are infinite, and $R$ is a [model-theoretic] bipartite random graph.

\br
\item $T_{\xm, 1}$: Suppose the language has unary predicates 
\[ \{ \mcq, \mcp, Q_{\langle \rangle}, Q_{0}, Q_{1}, P_{\langle \rangle}, P_{0}, P_{1} \}. \]  
Consider universal axioms saying that $\mcq, \mcp$ partition the domain, that $Q_{\langle \rangle} = \mcq$, $P_{\langle \rangle} = \mcp$; 
that $Q_0, Q_1$ partition $\mcq$, and $P_0, P_1$ partition $\mcp$; that $R \subseteq \mcq \times \mcp$;  and that $R$ is forbidden between 
$Q_0$ and $P_1$. 
\\ In (a model $M$ of) the model completion, the unary predicates are all infinite, and $R$ is a bipartite random graph between 
$Q^M_a$ and $P^M_b$ for $(a,b) \in \{ (0,0), (1,0), (1,1) \}$. Between $Q^M_0$ and $P^M_1$ there are no $R$-edges.  Notice something stronger:  
$R$ is also a bipartite random graph between $Q^M_1$ and $P^M_0 \cup P^M_1$. 
\end{itemize}

\br
Our theories $T_\xm$ can be informally thought of as descendants of these examples in the case where the unary predicates $Q_\eta, P_\nu$ 
are indexed by finitely branching trees of countable height, rather than trees of height $0$ or $1$, so the 
bipartite random graphs arise between type-definable ``leaves,''  
and we use background (``template'') sequences of sparse graphs to decide the pattern of 
where $R$ is forbidden.  Formally:

\begin{defn} Suppose we are given the following data. 
\begin{enumerate}
\item Let $\overline{m} = \langle m_n : n < \omega \rangle$ be a fast-growing sequence of natural numbers in the sense of \cite[Definition 6.1]{MiSh:1167}. 
\item Notation: let $\mct_1 = \mct_2 = (\{ \eta \in {^{\omega >} \omega} : \eta(n) < m_n \}, \tlf)$ be trees of countable height and finite branching; 
the branching at level $n$ is of size $m_n$.  
\item Let $\overline{E} = \langle E_n  : n < \omega \rangle$ be a sequence of sparse random\footnote{In this paper, finite random graphs are random in 
the usual sense of finite combinatorics; infinite random graphs are random in the sense of model theory.  It is also worth noting that the parameters for these theories involve graphs, although in the theories themselves, the edge relation is a bipartite graph. In translating from the graph to the theory, we simply double the vertices. See the next footnote.}
graphs satisfying \cite[Lemma 6.7]{MiSh:1167}:   
for each $n$, $E_n$ has $m_n$ vertices and for a reasonable notion of ``small'' and ``large,'' quantified there, 
every small set of vertices is contained in the neighborhood of a single vertex, and 
no large set of vertices is. 
\item Let $\xi: \omega \rightarrow \{ 0, 1 \}$ be a function which is $1$ infinitely often. 
\end{enumerate}
From this $\xm = \xm[\bar{m}, \bar{E}, \xi]$, called a parameter,  
we define the following universal theory $T^0_\xm$.  
The language has a binary relation $R$ and unary predicates 
\[ \{ \mcq, \mcp \} \cup 
{ \{ Q_\eta : \eta \in \mct_1 \} } \cup \{ P_\nu : \nu \in \mct_2 \}.  \] 
The axioms entail that: 
\begin{itemize}
\item $\mcq$, $\mcp$ partition the domain, $Q_{\langle \rangle} = \mcq$, 
 $P_{\langle \rangle} = \mcp$, and $R \subseteq \mcq \times \mcp$. 
 \item for $\eta \in \mct_1$ of height $n$, $\{ Q_{\eta^\smallfrown \langle \ell \rangle} : \ell < m_n \}$ partitions 
$Q_\eta$; \\ likewise, 
for $\nu \in \mct_2$ of height $n$, $\{ P_{\eta^\smallfrown \langle \ell \rangle} : \ell < m_n \}$ partitions  
$P_\eta$.  
\item Given $\eta \in \mct_1$ and $\nu \in \mct_2$, both of height $n$, there is an axiom forbidding 
$R$-edges between $Q_{\eta^\smallfrown \langle i \rangle}$ and $P_{\nu^\smallfrown \langle j \rangle}$ if the following two conditions both hold:
first, $(i,j)$ is not an edge\footnote{The graphs $E_n$ allow self-loops.} in $E_n$, and second, $\xi(n) = 1$. 
\end{itemize}
$T_\xm$ is the model completion of $T^0_\xm$. 
\end{defn}
The theories $T_\xm$ are well defined, eliminate quantifiers, and are 
simple rank one, with the only dividing coming from equality. (Essentially, everything not forbidden occurs.) This is proved in \cite{MiSh:1167} 2.21-2.22, though all of \S 2 of that paper, in particular 2.17-2.20, is devoted to 
developing the $T_\xm$.\footnote{We summarize here a complementary perspective on these theories, which the interested reader can find 
spelled out in \cite{MiSh:1167} \S\S 2-3. 
That is, the models of the theories $T_\xm$ can be informally seen as ``unions of random graphs,'' in the following sense. Define (for expository purposes; this is not 
definable or interpretable in $T_\xm$) a bipartite ``reduced graph'' 
whose vertices are $\lim(\mct_1)$, the leaves of $\mct_1$, on the left and $\lim(\mct_2)$, the leaves of $\mct_2$, on the right, 
with an edge between $\eta_*$ and $\rho_*$ if $R$-edges are not forbidden between $Q_{\eta_* \rstr n}$ and $P_{\rho_* \rstr n}$ for any $n<\omega$. 
Then in, say, any $\aleph_1$-saturated model $M$ of $T_\xm$, informally write ``$Q^M_{\eta_*}$'' for the type-definable set 
$\bigcap_{n<\omega} Q^M_{\eta_* \rstr_n}$, and likewise ``$P^M_{\nu_*}$'' for the type-definable set 
$\bigcap_{n<\omega} P^M_{\nu_* \rstr_n}$.  Then whenever $X \subseteq \lim(\mct_1)$, $Y \subseteq \lim(\mct_2)$ are sets of leaves, 
we have that in $M$, $R$ is a bipartite random graph between $\bigcup \{ Q^M_{\eta_*} : \eta_* \in X \}$ and $\bigcup \{ P^M_{\nu_*} : \nu_* \in Y \}$
when the bipartite reduced graph restricted to $X \times Y$ is complete, and is an empty graph when the bipartite reduced graph restricted 
to $X \times Y$ is empty. Putting these two facts together is enough to give the whole picture.} 

\br
\section{Preliminaries on ultrafilters} \label{s:prelim}

In this section we remind the reader of most definitions and earlier theorems we will need for our constructions below, though 
we will necessarily be brief and will focus on communicating the big picture.  

\subsubsection{Keisler's order} This is a pre-order on complete, countable theories defined by Keisler in 1967 \cite{keisler}, which can be 
thought of as ``comparing complexity''. It becomes a partial order modulo the equivalence relation of being in the same class. 

\begin{defn}[Keisler's order] \label{d:keisler}
Let $T_1, T_2$ be complete countable theories. 
Say $T_1 \tlf T_2$ if for any infinite cardinal $\lambda$, regular ultrafilter $\de$ on $\lambda$, and models 
$M_i$ of $T_i$, if the ultrapower $(M_2)^\lambda/\de$ is $\lambda^+$-saturated, then also the ultrapower 
$(M_1)^\lambda/\de$ is $\lambda^+$-saturated. 
\end{defn}

It is important to remember that because the ultrafilter is regular, the choice of model isn't important 
(within the elementary equivalence class), by a lemma of Keisler \cite{keisler}. That is, if $\de$ is a regular ultrafilter on $\lambda$, and $M, N$ are models of the same complete countable theory, 
then $M^\lambda/\de$ will be $\lambda^+$-saturated if and only if $N^\lambda/\de$ is $\lambda^+$-saturated. This is helpful because it says Keisler's order is 
really about theories and not about models, and also, it means 
we can work with whichever model of the theory may help our construction. 

It is also useful to know that by \cite{mm1}, Theorem 12, Keisler's order is local, meaning that if $M$ is a model of a complete countable theory, $\de$ is a regular ultrafilter on $\lambda$, and $M^\lambda/\de$ is not $\lambda^+$-saturated, then there is some formula $\vp$ so that $M^\lambda/\de$ is not $\lambda^+$-saturated for $\vp$-types.\footnote{i.e., types which involve just positive and negative instances of a single formula over some parameter set, rather than of all formulas in the language. Which formula this is may vary depending on the ultrapower.}

Some lengthier discussions of Keisler's order are in the original paper \cite{keisler}, in the second author's book \cite{Sh:c}, 
in the first author's thesis \cite{mm-thesis} and in the recent \cite{mm-icm}.  

\subsubsection{Boolean algebras} \label{sss:ba}
Next we will need some notation for Boolean algebras. 
Let ``$\ba^1_{\alpha, \mu, \theta}$'' mean the completion of a free Boolean algebra generated by 
$\alpha$ independent partitions, each of size $\mu$, where intersections of fewer than $\theta$ elements from distinct 
antichains are nonzero. 
More precisely, suppose we write the intended generators of this Boolean algebra as $\{ \mx_{\beta, \gamma} : \beta < \alpha, \gamma < \mu \}$. 
Let the axioms say that given any $u = \{ \mx_{\beta_i, \gamma_i} : i < i_* < \theta \}$, we have that $\bigcap u > 0$ if and only if 
$\beta_i = \beta_j \implies \gamma_i = \gamma_j$ for $i, j < i_*$. 
It will also be useful to have notation for such nonzero intersections of generators. Let 
``$f \in \fin_{\mu,\theta}(\alpha)$'' mean that $f$ is a function from $\alpha$ to $\mu$ whose domain has size $<\theta$. Let $\mx_f$ denote the 
(nonzero) intersection $\bigcap \{ \mx_{\beta, f(\beta)} : \beta \in \dom(f) \}$. Observe that elements of the form $\{ \mx_f : f \in \fin_{\mu, \theta}(\alpha) \}$ are 
dense in $\ba = \ba^1_{\alpha, \mu, \theta}$. 

It will be useful in what follows to identify such functions $f$ with their graphs, so that we can say things like ``$f \cup \{ (\gamma, i) \}$'' to indicate that we extend the function 
to take the value $i$ on input $\gamma$. 
In keeping with this notation, an expression like $\mx_{\{(\epsilon, 0)\}}$ will denote $\mx_f$ where $f$ is the function whose graph is $\{ (\epsilon, 0) \}$, which is none other 
than the generator called $\mx_{\epsilon, 0}$ in the previous paragraph. (Claim \ref{g20} below is a good example of why such notation is useful.)
For later reference, we summarize: 

\begin{concl} \label{ntn:ba}
The above discussion defines $\ba^1_{\alpha, \mu, \theta}$, $\fin$, and $\mx_f$. 
\end{concl}

\subsection{Goodness} \label{s:goodness}

Now we explain a combinatorial property of ultrafilters which is key for saturation. 
Step back for a moment to the basic setup of an ultrapower $M^I/\de$, where $|I| = \lambda$ (for clarity, we give $\lambda$ and the index set different names). 
Let $p(x) = \{ \vp_\alpha(x,a_\alpha) : \alpha < \lambda \}$ be a type in the ultrapower,\footnote{Since, as noted, Keisler's order is local, we could also assume all the $\vp_\alpha$'s are the same formula $\vp$, and just the $a_\alpha$'s differ.} 
and here the $a_\alpha$'s may be finite tuples.  Since $\de$ is a regular ultrafilter, we may fix some regularizing family $\{ X_\alpha : \alpha < \lambda \} \subseteq \de$, 
which recall means that the intersection of any infinitely many $X_\alpha$'s is empty. 

What it means for $p$ to be a type is, of course, by the fundamental theorem of ultrapowers, that 
any finitely many of its formulas have a common solution on some large (i.e., in $\de$) set of index models.  In particular, we can assign 
any finite $u \subseteq \lambda$ to a set in $\de$ as follows:  
\[ u \mapsto \{ t \in I : M \models \exists x \bigwedge_{\alpha \in u} \vp_\alpha(x,a_\alpha[t]) \} \cap \bigcap_{\alpha \in u} X_\alpha.  \] 
This map $f: [\lambda]^{<\aleph_0} \rightarrow \de$ is monotonic, meaning $u \subseteq v$ implies $f(v) \subseteq f(u)$, and its image is a regularizing family, meaning that 
for each $t \in \lambda$, the set $\{ \alpha < \lambda : t \in f(\{\alpha\}) \}$ of formulas assigned to index $t$ is finite.  It also has the property that 
$f(u) \cap f(v) \supseteq f(u \cup v)$. 
However, it is \emph{not necessarily} the case that equality holds. The key point, first pointed out by Keisler, is that the type $p$ is realized if and only if this $f$ has a multiplicative refinement, that is, if there is $g: [\lambda]^{<\aleph_0} \rightarrow \de$, so that for all finite $u, v \subseteq \lambda$ we have $g(u) \subseteq f(u)$, and $g(u) \cap g(v) = g(u \cup v)$.\footnote{For a detailed explanation of this point see e.g. \cite{mm-thesis} Chapter 1, Observation 2.} 
This motivates: 

\begin{defn}[Keisler] The filter or ultrafilter $\de$ on $\lambda$ is called \emph{$\lambda^+$-good}, or simply \emph{good}, if every $f: [\lambda]^{<\aleph_0} \rightarrow \de$ has a multiplicative refinement. 
\end{defn}

As discussed above, by work of Keisler, and Kunen in ZFC, good regular ultrafilters exist on every infinite cardinal. It will be useful to also say:

\begin{conv}
If $\de$ is a regular ultrafilter on $\lambda$ and $T$ a complete countable theory, say that $\de$ is ``good for $T$'' if for some, equivalently every, model $M$ of $T$ we have that $M^\lambda/\de$ is $\lambda^+$-saturated. 
\end{conv}

In this language, the above says that a good regular ultrafilter is good for any complete countable $T$. 

The point for Keisler's order is that, as Keisler saw, this all entails the order has a maximum class, because  
there 
are some theories which are `complicated enough' that they are saturated only by such ultrafilters.\footnote{Keisler proved this by finding complexity in the sense of being able to represent any failure of goodness as an omitted type, as in Peano arithmetic. It was subsequently surprising that linear order is maximal (Shelah \cite{Sh:c}, VI, \S 3) or indeed that $SOP_2$ is maximal (Malliaris-Shelah \cite{MiSh:998}).} So the maximum class consists of the nonempty class of theories $T$ with the property that the only ultrafilters which are good for $T$ are good.

Next we explain two innovations in ultrafilter construction which will be useful for the present paper. The first is due to \cite{MiSh:999} and the second 
to \cite{MiSh:1167}. 

\subsubsection{Separation of variables} 

The first idea we shall need is ``separation of variables,'' from \cite{MiSh:999}. As sketched in \S \ref{s:kunen} above, 
the state of the art following \cite{Sh:c} had been building regular ultrafilters on $\lambda$ or $I$, $|I| = \lambda$ 
by transfinite induction, making decisions about elements of $\mcp(\lambda)$, 
or about related elements of some independent family of functions.  There was a priori no place in this story for Boolean algebras other than $\mcp(\lambda)$, or 
for calling on the help of irregular ultrafilters. 

A different way one could try to proceed is the following; 
separation of variables essentially says that it will work. 
Start with $I$, $|I| = \lambda$, on which we want to build a regular ultrafilter. Choose a Boolean algebra $\ba$ which has size $2^\lambda$ and has maximal antichains of size $\leq \lambda$ (these are the only senses in which it needs to `look like' the power set of $\lambda$). Build a regular \emph{filter} $\de_0$ on $\lambda$ which is regular, which is 
``sufficiently expressive'' (more soon) and which has the property that the quotient $\mcp(I)/\de_0$ is our $\ba$.  Now the problem of building a regular ultrafilter on $I$ reduces to the problem of building an ultrafilter $\de_*$ on $\ba$, in the sense that given any $\de_*$ on $\ba$, we can ``combine'' $\de_0$ and $\de_*$ in the natural way to get an ultrafilter $\de$ on $\lambda$, which is regular because it extends $\de_0$.  Of course $\de_*$ itself need not be regular. 

The reason separation of variables is a theorem is twofold. First, since this is all in the service of Keisler's order, one has to be sure that moving the construction problem to the quotient won't irrevocably lose information: that is, we can ensure a given `goodness' problem from the original ultrapower is solved by transferring it to a related problem, called a `morality' problem (see next subsection) in $\ba$, $\de_*$ and solving this there. 
The theorem says that this will work if $\de_0$, the enveloping regular filter, meets a certain condition called excellence (for most including present purposes, good is enough), and goes on to say that for any $\ba$ as described, we can indeed arrange for the quotient to be isomorphic to $\ba$ while having 
$\de_0$ be excellent. 

Summarizing, separation of variables transfers the problem of building a regular ultrafilter on $|I|$, $|I| = \lambda$ onto 
any reasonable Boolean algebra (complete, of size $\leq 2^\lambda$, 
with the $\lambda^+$-c.c.).\footnote{For an even more detailed exposition, see also \cite{MiSh:1167} \S 7. }  
We will use the following notation. 

\begin{defn}[Regular ultrafilters built from tuples, from \cite{MiSh:999} Theorem 6.13] \label{d:built}
Suppose $\de$ is a regular ultrafilter on $I$, $|I| = \lambda$. We say that $\de$ is built from 
$(\de_0, \ba, \de_*, \jj)$ when: 

\begin{enumerate}
\item {$\de_0$ is a regular, $|I|^+$-good filter on $I$} 
\item {$\ba$ is a Boolean algebra}
\item {$\de_*$ is an ultrafilter on $\ba$}
\item {$\jj : \mcp(I) \rightarrow \ba$ is a surjective homomorphism such that:}
\begin{enumerate}
\item $\de_0 = \jj^{-1}(\{ 1_\ba \})$ 
\item $\de = \{ A \subseteq I : \jj(A) \in \de_* \}$.
\end{enumerate}
\end{enumerate}
\end{defn}

Finally, it will be useful to have some notation for the image under $\jj$ of sets which come from $\lost$ theorem. We have used $B \subseteq I$ and correspondingly 
$\mb \in \ba$ for sets on which certain formulas or sets of formulas have solutions, and $A \subseteq I$ and correspondingly $\ma \in \ba$ for sets on which things are true\footnote{this of course allows for some overlap, but in the present paper there shouldn't be confusion.}: 

\begin{ntn} \label{ntn:a}
Continuing in the notation of \ref{d:built}, suppose $\vp[\bar{a}]$ is a formula in which all free variables have been instantiated by parameters.  
Let $\ma[ ~\vp[\bar{a}]~ ] \in \ba$ denote $\jj ( ~ \{ t \in I :  ~M \models \vp[\bar{a}[t]] \} ~)$.   Below, we will use 
$\ma[ a = b ]$. 
\end{ntn}

\subsubsection{Morality} Before continuing we pause to explain ``morality.''   We have just seen that the problem of realizing a type amounts to solving a problem of goodness: whether a particular representation of the type $f: [\lambda]^{<\aleph_0} \rightarrow \de$ has a multiplicative refinement. We have also motivated transferring the problem of building ultrafilters on $\mcp(\lambda)$ onto some quotient $\ba$. When doing so the instances of goodness are necessarily also projected, and we need now to say what it means for them to be ``solved'' (that is, what kind of configuration in $\ba$ will ensure that back in $\de$, the instance of goodness is actually also solved).

\begin{defn} \label{d:enveloping}
Given 
$\ba$ and $\de_*$, imagine some ultrapower $M^I/\de$, $|I| = \lambda$, which arises by choosing  
$\de$ and $\jj$ which fulfill the requirements of \ref{d:built} for our given $\ba, \de_*$, and choosing some $M \models T$. 
Call such an ultrapower an ``enveloping ultrapower'' and of course, for a given $\ba$ there could be many such ``enveloping ultrapowers''. 
\end{defn} 

In $M^I/\de$, for any formula $\vp$ we will have $\vp$-types $\{ \vp(x,\bar{a}_\alpha) : \alpha < \lambda \rangle$ 
and can consider the ``\los map'' which for any finite subset $u$ of $\lambda$ sends 
$u \mapsto B_u := \{ t \in I : M \models \exists x \bigwedge_{\alpha \in u} \vp(x,\bar{a}_\alpha[t]) \}$, possibly intersected with 
$\bigcap_{\alpha \in u} X_\alpha$ where $\{ X_\alpha : \alpha < \lambda \}$ is some regularizing family. We explained above that this map has a multiplicative 
refinement (in the sense of the definition of good ultrafilter) if and only if the type is realized.

In \cite{MiSh:999} Definition 6.1 we gave a definition of ``possibility pattern'' that covered all sequences of the form $\bar{\mb} = \langle \mb_u : u \in [\lambda]^{<\aleph_0} \rangle$ 
in $\ba$ arising as images of some $\{ B_u : u \in [\lambda]^{<\aleph_0} \}$ which came specifically from a type in our given theory, in any enveloping ultrapower. 
That definition was slightly more general, but for the rest of the present paper, 
the reader can indeed safely take ``possibilty pattern'' to mean ``any $\bar{\mb}$ arising from some enveloping ultrapower in this way'' (and usually we will specify the $\vp$).   
The next definition simply states the analogue of good for the quotient: informally, that every possibility pattern has a multiplicative refinement. 

\begin{conv} \label{possibility}
Suppose we are given an ultrafilter $\de_*$ on a Boolean algebra $\ba$, a theory $T$, a formula $\vp$, and $\lambda$. 
\begin{enumerate}

\item[(a)] In this paper, say that $\bar{\mb} = \langle \mb_u : u \in [\lambda]^{<\aleph_0} \rangle$ a $(\lambda, T)$-possibility pattern to mean that it is the image of the sequence $\langle B_u : u \in [\lambda]^{<\aleph_0} \rangle$ representing some $\vp$-type $p = \{ \vp(x,a_\alpha) : \alpha < \lambda \}$ in some enveloping ultrapower $($see two paragraphs earlier$)$ of some model of $T$, for some formula $\vp$ of $T$.  When we want to fix $\vp$, say a $(\lambda, T, \vp)$-possibility pattern. 

\item[(b)] Say that $\de_*$  is ``moral'' for a theory $T$, or $(\lambda^+, T)$-moral, if 
whenever $\bar{\mb} = \langle \mb_u : u \in [\lambda]^{<\aleph_0} \rangle$  is a $(\lambda, T)$-possibility pattern, i.e. for any $\vp$, then \footnote{Sometimes dealing with certain $\vp$ will ensure all are handled: see the next subsection.}
$\bar{\mb}$ has a multiplicative refinement, meaning that there is $\bar{\mb}^\prime = \langle \mb^\prime_u : u \in [\lambda]^{<\aleph_0} \rangle$ such that 
$(i)$ for each $u$, $\mb^\prime_u \leq \mb_u$; $(ii)$ for each $u, v$, ~ $\mb^\prime_u \cap \mb^\prime_v = \mb^\prime_{u \cup v}$; and $(iii)$ each $\mb^\prime_u \in \de_*$. 
\end{enumerate}
 
\end{conv}

In this language, the background theorem of separation of variables, \cite{MiSh:999} Theorem 6.13, proves that if the ultrafilter $\de_*$ on $\ba$ is 
moral for a theory $T$ (recalling \ref{d:built}(1) that $\de_0$ is $|I|^+$-good)  then the corresponding regular ultrafilter $\de$ on $I$ 
is indeed good for $T$; and also the inverse, if $\de$ is good for $T$ then also $\jj(\de)$ is moral for $T$. 

\br

\subsubsection{Note on the locality of Keisler's order} \label{s:locality} As mentioned, Keisler's order is local (\cite{mm1} Theorem 12, meaning that 
in order to obtain saturation it suffices to realize $\vp$-types that is, maximal consistent sets of positive and negative instances of $\vp$ with 
parameters from some set of size $\leq \lambda$, for all formulas $\vp$). For the purposes of the present paper, it will be useful to remember that in some theories, 
we can restrict to a couple of distinguished formulas $\vp$ and know that realizing $\vp$-types for one of these ``distinguished'' formulas $\vp$ will suffice. 
For example, in linear order, it suffices to deal with $\vp(x,y) = x<y$, or $\vp(x;\bar{y}) = y_0 < x < y_1$, and in the random graph,  
it suffices to deal with $R(x,y)$, or with $\vp(x;\bar{y}) = R(x,y_0) \land \neg R(x,y_1)$, (where $R$ is the edge relation); this is immediate from the model theoretic point of view.  
In the case of $T_\xm$,  it suffices to deal with $Q_\nu(x) \land R(x,y)$ for each $\nu \in \mct_1$, 
provided that we ensure separately that the ultrafilter is good for the random graph; this is proved in 
\cite{MiSh:1167} Conclusion 5.6. 
We will use these facts in the proofs of Claim \ref{c:42} and Lemma \ref{cc-lemma}.

\br

\subsubsection{Building ultrafilters and Boolean algebras together by induction} The final idea we will need in the present paper was one of the 
main advances behind the proof in \cite{MiSh:1167}  
that Keisler's order has continuum many classes. 
At a high level, it addresses the following very interesting problem. Keisler's order talks about two kinds of ultrafilters on Boolean algebras -- the regular $\de$'s, and the types we are trying to realize. Can we bring these two closer together? 

The new idea was that at least for the theories $T_\xm$ (cf. Question \ref{q0.2} above),  we can start to build ultrafilters ``tailor-made'' for theories, working within the regime of separation of variables, by 
building the Boolean algebra $\ba$ and the ultrafilter $\de_*$ on it together, by induction.\footnote{Thus $\ba$ and $\de$ in the previous paragraph may be the final Boolean algebra and ultrafilter we have finished constructing, or some pair 
arising at some stage in an induction.} We do this by at each stage simply 
adding formal ``solutions'' [multiplicative refinements] to ``problems'' [possibility patterns] towards saturation for some theory, 
and the key point is that we add the solutions 
as freely as possible (modulo the constraints imposed by being a solution).   
Set theorists may observe a resonance with ideas from iterated forcing.  

The next notation simply says this mathematically; if the reader will pause to think what is needed in such a definition before reading it, they may 
find it natural, despite the abundance of notation.  Possibility pattern was defined in \ref{possibility} above.

\begin{defn}[\cite{MiSh:1167} Definition 10.12] \label{d:extn} 
Say that $\mfb = (\ba_\mfb, \de_\mfb)$ is a 
\[ \mbox{ $(\lambda, T, \vp)$-extension of $\mfa$ } \]
when there exists a $(\lambda, T, \vp)$-possibility pattern 
 $\bar{\mb} = \{ \mb_u : u \in [\lambda]^{<\aleph_0} \}$ such that $\mfb$ is a $(\lambda, T, \bar{\mb}, \vp)$-extension of $\mfa$, which means: 
\begin{enumerate}
\item $\ba_\mfb$ is the completion of the Boolean algebra $\ba$ generated by the set $\mcy_{\mfa, \mfb}$ which is 
$\ba_\mfa$ along with the set of new elements 
$\{ \mb^1_{\{\alpha\}} : \alpha < \lambda \}$, freely except for the set of equations $\Gamma_{\mfa, \mfb}$ which are:\footnote{
i.e. freely 
except for the rules already governing $\ba_\mfa$ and the new rules stating that $\bar{\mb}^1$ is 
a formal solution to $\bar{\mb}$. (The word ``equation'' here does not exclude inequalities.)}  
\begin{enumerate}
\item the equations already in $\ba_\mfa$.
\item for every nonempty finite $u \subseteq \lambda$, 
\[  \bigcap_{\alpha \in u} \mb^1_{ \{\alpha\} } \leq \mb_u. \]
\end{enumerate}
\item Notation: for $|u| > 1$, let $\mb^1_u := \bigcap_{\alpha \in u} \mb^1_{\{\alpha\}}$.  Convention: $\mb^1_\emptyset = 1_\ba$. 
\item 
When  $u = \{ \alpha \}$ and it is unlikely to cause confusion, we may drop parentheses and write $\mb^1_\alpha$ for $\mb^1_{\{ \alpha \}}$, 
so the new elements are $\{ \mb^1_\alpha : \alpha < \lambda \}$. 
\item $\de_\mfb$ is an ultrafilter on $\ba_\mfb$ which agrees with $\de_\mfa$ on $\ba_\mfa$, and such that $\mb^1_\alpha \in \de_\mfb$ 
for all $\alpha < \lambda$, \emph{if} such an ultrafilter exists; otherwise not defined. 
\end{enumerate}
\end{defn}

\br
\noindent The next fact, also natural upon reflection, simply uses that the generators are dense in the completion (and the $\Delta$-system lemma) to 
give a useful normal form for elements in such extensions.  

\begin{fact}[\cite{MiSh:1167} Observation 10.19] \label{smooth}
Suppose 
$\mfb$ is a $(\theta, T, \bar{\mb})$-extension of $\mfa$ for some $\theta \leq \lambda$. Let $\langle \ma^2_\alpha : \alpha < \kappa \rangle$ be a sequence of elements of $\ba^+_\mfb$, for some uncountable regular $\kappa$. Then: 

\begin{enumerate}
\item[$(1)$]  for each $\alpha < \kappa$, there is $\ii_\alpha = (\mx_\alpha, u_\alpha, \langle u_{\alpha, \ell} : \ell < n_\alpha \rangle)$  such that  $\mx_\alpha \in \ba^+_\mfa$; $n_\alpha \in \mathbb{N}$; $u_\alpha$, $u_{\alpha,0}, \dots, u_{\alpha,n_\alpha-1} 
\in [\theta]^{<\aleph_0}$; 
$u_\alpha \not\supseteq u_{\alpha,\ell}$ for $\ell < n_\alpha$; $\mx_\alpha \leq \mb_{u_\alpha}$; 
and 
\[ \ba_\mfb \models  0 < \mx_\alpha \cap \mb^1_{u_\alpha} \cap \bigcap_{\ell < n_\alpha} (- \mb^1_{u_{\alpha,\ell}})  \leq \ma^2_\alpha. \] 
\emph{\small{[i.e., since the generators are dense in the completion, we can find a positive element below $\ma^2_\alpha$ which is the intersection of 
an element from $\ba_\mfa$, some number of new elements, and some number of negations of (intersections of) new elements.]}}

\vspace{1mm}
\item[$(2)$] 
Given $\ii_\alpha$ for $\alpha < \kappa$ from $(1)$, define $w_\alpha = u_\alpha \cup \bigcup\{ u_{\alpha, \ell} : \ell < n_\alpha \}$.  
Then there are $\uu \in [\kappa]^\kappa$, 
$w_*$, $u_*$, $n_*$, ${\langle u^*_\ell : \ell < n_* \rangle}$ such that for every $\alpha \in \uu$, 
$w_* \subseteq w_\alpha$ and $\langle w_\alpha \setminus w_* : \alpha \in \uu \rangle$ are pairwise disjoint,  
$n_\alpha = n_*$, $u_\alpha \cap w_* = u_*$, $u_{\alpha, \ell} \cap w_* = u^*_\ell$.

\noindent \emph{\small{[i.e., by applying the $\Delta$-system lemma we can smooth this out on a large set.]}}
\br

\item[$(3)$] For every $\alpha \in \uu$ and $\mx_\alpha$ from $\ii_\alpha$, we have that $\mx_\alpha \bp \ma^2_\alpha$, \cite{MiSh:1167} $8.10$.

\br
\item[$(4)$] Suppose $\uu$ is from $(2)$ and $X \subseteq \uu$ is finite and $\ma_* \in \ba^+_\mfa$. Suppose
\[ \ba_\mfb \models \ma_* \cap  \bigcap_{\alpha \in X} \left(\mx_\alpha \cap \mb^1_{u_\alpha}\right)  > 0. \]
Then also 
\[ \ba_\mfb \models  \ma_* \cap \bigcap_{\alpha \in X}  \left(\mx_\alpha \cap \mb^1_{u_\alpha} \cap \bigcap_{\ell < n_\alpha} (- \mb^1_{u_{\alpha,\ell}}) \right)  > 0 \]
\emph{\small{[i.e., when checking for positive intersections we may safely ignore complements.]}}
\end{enumerate}
\end{fact}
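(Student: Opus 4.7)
The overall approach is to exploit two structural features of the extension $\mfb$: first, that its Boolean algebra is the completion of a subalgebra generated freely over $\ba_\mfa$ modulo the minimal relations $\Gamma_{\mfa,\mb}$ of Definition \ref{d:extn}; and second, that a $\Delta$-system lemma can then align the finite index sets appearing in a canonical normal form on a large subset of $\kappa$.

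For part (1), since the generated subalgebra is dense in the completion $\ba_\mfb$, for every positive $\ma^2_\alpha$ there is a positive generator below it. A typical generator of the un-completed algebra is a Boolean combination of finitely many $\ba_\mfa$-elements and finitely many new elements $\mb^1_\beta$; passing to disjunctive normal form and selecting one positive disjunct yields an expression of shape $\mx \cap \bigcap_{\beta \in u_\alpha} \mb^1_\beta \cap \bigcap_{\ell < n_\alpha}(-\mb^1_{u_{\alpha,\ell}})$ with $\mx \in \ba^+_\mfa$. The bound $\mx_\alpha \leq \mb_{u_\alpha}$ is arranged by replacing $\mx$ with $\mx \cap \mb_{u_\alpha}$, which leaves the full expression unchanged because $\mb^1_{u_\alpha} \leq \mb_{u_\alpha}$ is forced by $\Gamma_{\mfa,\mb}$. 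The cleanliness condition $u_\alpha \not\subseteq u_{\alpha,\ell}$ is arranged by replacing each $u_{\alpha,\ell}$ by $u_{\alpha,\ell} \setminus u_\alpha$ (valid since $\mb^1_{u_\alpha} \cap -\mb^1_{u_{\alpha,\ell}} = \mb^1_{u_\alpha} \cap -\mb^1_{u_{\alpha,\ell} \setminus u_\alpha}$) and discarding any $\ell$ for which the result is empty; positivity of the original generator rules out the degenerate case.

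Part (2) is the $\Delta$-system lemma applied to the finite sets $w_\alpha = u_\alpha \cup \bigcup_{\ell<n_\alpha} u_{\alpha,\ell}$: after first thinning to constant $n_\alpha = n_*$, extracting a $\Delta$-system root $w_*$, and then further thinning so that the finitely many possible values of $u_\alpha \cap w_*$ and each $u_{\alpha,\ell} \cap w_*$ are constant, we obtain the required $\uu$, $u_*$, and $u^*_\ell$. Part (3) is an immediate appeal to Observation 8.10 of \cite{MiSh:1167}: the construction of $\mx_\alpha$ as the $\ba_\mfa$-component of a positive generator below $\ma^2_\alpha$ exhibits it as a projection, so $\mx_\alpha \bp \ma^2_\alpha$.

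The main work, and the expected principal obstacle, is part (4): on the $\Delta$-system $\uu$, the negations $-\mb^1_{u_{\alpha,\ell}}$ should be ``free for the asking'' in positivity checks, i.e., if $\ma_* \cap \bigcap_{\alpha \in X}(\mx_\alpha \cap \mb^1_{u_\alpha}) > 0$ then adjoining every $-\mb^1_{u_{\alpha,\ell}}$ preserves positivity. I would argue this directly from the freeness of the presentation of $\ba_\mfb$ over $\ba_\mfa$: the only relations beyond those inherited from $\ba_\mfa$ are $\mb^1_u \leq \mb_u$, and the $\Delta$-system structure makes the tails $w_\alpha \setminus w_*$ pairwise disjoint across $\alpha \in X$. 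A witness $\md \in \ba^+_\mfa$ for positivity of the ``positive conjunction'' can therefore be completed to a witness for the negated conjunction by locating, at each tail coordinate, an index in $u_{\alpha,\ell} \setminus u_\alpha$ (which lies outside $w_*$ and hence outside every other tail) whose corresponding $\mb^1_\beta$ is unrestricted relative to the previously chosen conjuncts, precisely because no relation in $\Gamma_{\mfa,\mb}$ ties these tail generators together. I would organize this as an induction on $|X|$, with base case $|X|=1$ immediate from (1) and the inductive step exploiting the disjointness of tails; the delicate point is verifying that the coordinate-by-coordinate freeness genuinely passes through the completion, which I would handle by pulling back to positivity statements in the un-completed free algebra as in \cite{MiSh:1167}.
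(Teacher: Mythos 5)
First, a caveat: the paper does not prove Fact \ref{smooth} — it is quoted from \cite{MiSh:1167} (Observation 10.19) — so I am measuring your proposal against the standard argument, whose ingredients are visible in how this paper uses the Fact elsewhere (the retraction homomorphisms in the proofs of Claim \ref{c:42}, Lemma \ref{cc-lemma} and Lemma \ref{l:add-tnk-solution}). Your treatment of (1)--(3) is fine: density of the generated subalgebra in the completion, disjunctive normal form, absorbing $\mb_{u_\alpha}$ into $\mx_\alpha$ via the relation $\mb^1_{u_\alpha}\leq\mb_{u_\alpha}$, the $\Delta$-system lemma plus pigeonhole on the traces on the root, and the citation for $\bp$.

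The gap is in (4), precisely in the parenthetical that carries the weight of the argument. You assert that each $u_{\alpha,\ell}\setminus u_\alpha$ contains an index lying \emph{outside $w_*$}, hence outside every other tail. That is false in general: nothing prevents $u_{\alpha,\ell}\subseteq w_*$ (for instance $u_{\alpha,\ell}=u^*_\ell$, constant across the $\Delta$-system), in which case $u_{\alpha,\ell}\setminus u_\alpha$ sits entirely inside the root and meets every $w_\beta$. The claim you actually need is that each $u_{\alpha,\ell}$ contains an index outside $W:=\bigcup_{\beta\in X}u_\beta$, and this requires a small case analysis you have not done: for $\gamma\in u_{\alpha,\ell}\subseteq w_\alpha$, if $\gamma\in w_*$ then $\gamma\in W$ iff $\gamma\in u_*\subseteq u_\alpha$ (since $w_*\cap u_\beta=u_*$ for all $\beta$), while if $\gamma\in w_\alpha\setminus w_*$ then $\gamma$ lies in no $w_\beta$ for $\beta\neq\alpha$, so $\gamma\in W$ iff $\gamma\in u_\alpha$; either way $u_{\alpha,\ell}\cap W=u_{\alpha,\ell}\cap u_\alpha$. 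Hence $u_{\alpha,\ell}\subseteq W$ would force $u_{\alpha,\ell}\subseteq u_\alpha$ and make the displayed term in (1) equal to $0_{\ba}$, a contradiction. (Note that what positivity in (1) actually gives you, and what this step needs, is $u_{\alpha,\ell}\not\subseteq u_\alpha$; your normalization $u_{\alpha,\ell}\cap u_\alpha=\emptyset$ with $u_{\alpha,\ell}\neq\emptyset$ supplies it.) Once you know $u_{\alpha,\ell}\not\subseteq W$ for all $\alpha\in X$ and $\ell<n_\alpha$, the clean finish — in place of your induction on $|X|$ and the rather vague appeal to ``unrestricted'' coordinates — is the retraction this paper uses repeatedly: the assignment fixing $\ba_\mfa$ and each $\mb^1_\gamma$ for $\gamma\in W$, and sending $\mb^1_\gamma\mapsto 0$ for $\gamma\notin W$, respects the relations $\mb^1_u\leq\mb_u$ and so defines a homomorphism on the generated subalgebra; it sends each $-\mb^1_{u_{\alpha,\ell}}$ to $1$ and the full intersection to $\ma_*\cap\bigcap_{\alpha\in X}(\mx_\alpha\cap\mb^1_{u_\alpha})>0$, and a homomorphism cannot send $0$ to a nonzero element. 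Your strategy is the right one, but as written the key step rests on a false intermediate claim.
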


This concludes our summary of ultrafilter construction. 

\subsubsection{Note on the pseudo-nfcp} \label{s:pnfcp} 
It is not necessary for the present proofs, but we take the opportunity to mention that there is an interesting and as yet fairly unexplored combinatorial property called the pseudo-nfcp which we defined in \cite{MiSh:1206}, Definition 4.1, and 
which holds, say, of all pairs $(T_\xm, \vp)$ for $\xm$ a parameter and $\vp(x,y) = R(x,y)$, as is proved in \cite{MiSh:1206}, Claim 3.4.  We mention it in light of 
Question \ref{q0.1}, since recall that the theories which are minimal in Keisler's order are precisely those with the nfcp, i.e., not the finite cover property. 
The reader may notice that some proofs involving saturation for the $T_\xm$'s below could instead be carried out for $(T, \vp)$ with the pseudo-nfcp. 

\br

We now turn to the proofs.

\vspace{5mm}

\section{First direction: saturation} \label{s:first-s}

In this section we fix $\bn > \bk \geq 2$ and we build by induction a Boolean algebra $\ba$ and an ultrafilter $\de_*$ on it, so that a corresponding regular ultrafilter $\de$ formed 
from them by separation of variables will be good for any $T_\xm$. \footnote{The reader is reminded that an overview of the subsequent proofs and how they fit together is given at the end of the first section.}

\begin{conv}
Fix for the section integers $\bn > \bk \geq 2$. 
\end{conv}

\begin{conv} \label{conv-a}
Fix for the section infinite cardinals $\del, \mu$ satisfying $\del > \mu$.  We use $\theta = \aleph_0$. 
\end{conv}

\begin{ntn} 
For us $\chi$ will always denote a regular uncountable cardinal. The definitions will be more interesting when $\chi > \del$.  
Let $(\mch(\chi); \in)$ denote the sets hereditarily of size $<\chi$ $($or: whose transitive closure has size $< \chi$$)$, so, a model of sufficient set theory 
$($ZFC minus power set$)$. 
\end{ntn}

The next definition describes a property of a family of small submodels, and notice it has two parts: individually they each satisfy 
certain closure conditions, and all together, they overlap only as expected. 
Note that \ref{d:n-k-pos}(2) implies $N_\emptyset \subseteq N_u$ for all $u \in [\bn]^{\leq \bk}$. 

\begin{defn} \label{d:n-k-pos} 
Let $\bM = (\mch(\chi); \in)$ or one of its expansions, assuming $\tau(\bM)$ 
is countable\footnote{We need at least this expressivity, and then we can have other things in the language too.}.
Say that the family of elementary submodels $\langle N_u : u \in [\bn]^{\leq \bk} \rangle$ is \emph{in $(\bn,\bk, \mu)$-position} in $\bM$, 
$($really $(\bn, \bk, \mu, \theta)$-position in $\bM$ and $\mu^{<\theta} = \mu$, but we can omit $\theta$ when, as here, it is $\aleph_0$$)$ when:  
\begin{enumerate}

\item for each $u \in [\bn]^{\leq \mk}$, we have:  
\\ $N_u \preceq \bM$, 
$||N_u|| = \mu$, $[N_u]^{<\theta} \subseteq N_u$, and $\mu+1 \subseteq N_u$, 
\item and for all  $u_1, u_2 \in [\bn]^{\leq \bk}$,
\[ N_{u_1} \cap N_{u_2} \supseteq N_{u_1 \cap u_2}.  \]
\end{enumerate}
\end{defn}

\noindent 

Note: the next definition assumes the $\mu^+$-c.c.; we will prove in \ref{c:42} that this is true of every $\ba_\alpha$ in our construction sequence. 

\begin{defn} \label{g2a} \emph{ } 
Let $\ba$ be a complete Boolean algebra with the $\mu^+$-c.c., and let $\bar{\mx} = \langle \mx_\epsilon : \epsilon < \del \rangle$ 
be a sequence of elements of $\ba^+$ which are independent in $\ba$, meaning that every finite boolean combination is nonzero. 

We say that $(\ba, \bar{\mx})$ satisfies the $(\bn, \bk, \mu)$-c.c. 
when (A) implies (B):
\begin{enumerate}

\item[(A)] \begin{enumerate}
\item $\ba, \del, \bar{\mx} \in \bM = (\mch(\chi); \in)$,  
\item $x \in \bM$ is an element coding\footnote{By this we simply mean that any model which contains this element necessarily contains all the 
members of the tuple. } 
$(\ba, \bar{\mx}, \mu, \theta)$, 
\item 
$\langle N_u : u \in [\bn]^{\leq \bk} \rangle$ is in $(\bn,\bk, \mu)$-position in $\bM$, with $x \in N_\emptyset$. 
\end{enumerate}

\sbr
\item[(B)]  
Choose any $\ma_u \in \ba \cap N_u$ for $u \in [\bn]^{\leq \bk}$. 
\emph{ If } $\ma := \bigcap_{u} \ma_u > 0_\ba$, then for every $n<\omega$, $\trv: n \rightarrow \{ 0, 1\}$ and distinct 
$\epsilon_0, \dots, \epsilon_{n-1} \in \del \setminus \bigcup \{ N_{u} :  u \in [n]^{\leq k} \}$  
we have that $\ma \cap \bigcap_{i<n} (\mx_{\epsilon_i})^{\trv(i)} > 0_\ba$.
\end{enumerate}
\end{defn}

\begin{disc} \label{d:3.2a}
\emph{We may think of $\ref{g2a}$ as describing a property of certain systems of small, sufficiently closed subalgebras of our $\ba$. 
Informally, $\ba$ contains some designated free sequence of size $\del$, and
whenever we take a suitable system of subalgebras of $\ba$ and an element from each, \underline{if} the intersection of these 
elements is nonempty, call it $\ma$, \underline{then} $\ma$ freely crosscuts elements of the designated sequence outside those 
subalgebras.   ``Suitability'' has 
two simple parts. First, our subalgebras have enough information, and second, they are small, of size $\mu$, and sufficiently closed.
We formalize this by saying that in some ambient model of sufficient set theory, $x$ codes $($say$)$ the finite tuple $\langle \ba, \bar{\mx}, \del, \mu, \theta \rangle$; 
we then choose elementary submodels $N_u$ for $u \subseteq k$ as described, all containing $x$ 
$($because $N_\emptyset \subseteq N_u$, so they will each have 
their own small versions of $\ba$, etc$)$; and we consider the subalgebras
arising as the intersection of $\ba$ with these elementary submodels.} 
\end{disc}

Next we prove that our chain condition \ref{g2a} holds naturally for free Boolean algebras. 
On the notation $ \mx_{\{ (\epsilon, 0) \}}$, recall \S \ref{sss:ba} above. 

\begin{claim} \label{g20}
Assume $\alpha \geq \del$, $\ba = \ba^1_{\alpha, \mu, \aleph_0}$ and let $\bar{\mx} = \langle \mx_\epsilon : \epsilon < \del \rangle$ where  
$\mx_\epsilon = \mx_{\{ (\epsilon, 0) \}}$ for $\epsilon < \del$.  Then $(\ba, \bar{\mx})$ satisfies the 
$(\bn, \bk, \mu)$-c.c.  
\end{claim}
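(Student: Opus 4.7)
The plan is to use the notion of \emph{support} in the completion of a free Boolean algebra. For each $\mb \in \ba^+$ there is a minimal subset $\supp(\mb) \subseteq \alpha$, of cardinality $\leq \mu$ by the $\mu^+$-c.c., such that $\mb$ lies in the complete subalgebra of $\ba$ generated by the partitions indexed by $\supp(\mb)$. The essential property of this free BA, which is precisely the content of ``$\aleph_0$'' in the subscript of $\ba^1_{\alpha,\mu,\aleph_0}$, is \emph{independence}: if $\mb > 0$ and the distinct ordinals $\epsilon_0, \dots, \epsilon_{n-1}$ lie outside $\supp(\mb)$, then for any $\trv: n \to \{0,1\}$,
\[ \mb \cap \bigcap_{i<n} (\mx_{\epsilon_i})^{\trv(i)} > 0_\ba. \]
This is by induction on $n$: each $\mx_\epsilon = \mx_{\{(\epsilon,0)\}}$ is one element of the $\mu$-sized antichain at coordinate $\epsilon$, and both it and its complement (the sup of the remaining generators of that antichain) intersect any positive element of the subalgebra generated by coordinates $\neq \epsilon$ in a positive element.

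The real content of the proof is a bookkeeping lemma: if $\mb \in N_u \cap \ba$, then $\supp(\mb) \subseteq N_u$. Indeed, $\supp$ is a definable operation on $\ba^+$, and $\ba \in N_u$, so by elementarity $\supp(\mb) \in N_u$. From the point of view of $\bM$ this set has cardinality $\leq \mu$; since $\mu \in N_u$, by elementarity there is inside $N_u$ a surjection $f: \mu \to \supp(\mb)$. Then $f \in N_u$ and (from \ref{d:n-k-pos}) $\mu \subseteq N_u$, so $\supp(\mb) = \rn(f) \subseteq N_u$, as required.

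Granting these two points, the claim is immediate. Given (A) and elements $\ma_u \in \ba \cap N_u$ for $u \in [\bn]^{\leq \bk}$ with $\ma := \bigcap_u \ma_u > 0$, the bookkeeping lemma gives $\supp(\ma_u) \subseteq N_u$ for each $u$, and hence
\[ \supp(\ma) \;\subseteq\; \bigcup_{u \in [\bn]^{\leq \bk}} \supp(\ma_u) \;\subseteq\; \bigcup_{u \in [\bn]^{\leq \bk}} N_u. \]
Any distinct $\epsilon_0, \dots, \epsilon_{n-1} \in \del \setminus \bigcup_u N_u$ therefore lie outside $\supp(\ma)$, and the independence property of the first paragraph delivers condition (B). The only genuine obstacle -- and the only place where the $(\bn,\bk)$-position hypothesis is used non-trivially -- is the passage $\supp(\mb) \in N_u \Rightarrow \supp(\mb) \subseteq N_u$, which truly requires the ordinal-containment $\mu+1 \subseteq N_u$ and would fail with mere $<\theta$-closure; everything else unwinds from the definition of the free Boolean algebra.
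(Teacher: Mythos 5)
Your proof is correct and is essentially the paper's argument in different packaging: the paper likewise exploits that each $\ma_u$ is decided by a maximal antichain of generators of size $\leq\mu$, which can therefore be taken inside $N_u$ (using $\mu+1\subseteq N_u$ and elementarity --- exactly your bookkeeping lemma), and then descends to a single generator $\mx_f\leq\ma$ with $\dom(f)\subseteq\bigcup_u N_u$ and extends the finite function $f$ by the $\epsilon_i$'s, rather than invoking a support/independence lemma for the completion. The only point deserving a line of justification is the existence of the \emph{minimal} support (i.e.\ that supports of an element of the completion of a free algebra are closed under intersection), though minimality is only a convenience here, since any definably chosen support of size $\leq\mu$ --- e.g.\ the union of the domains of a canonically chosen maximal antichain of generators deciding $\mb$ --- would serve in its place.
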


\begin{proof}
Let $x$ be an element of $\bM$ which codes $(\ba, \bar{\mx}, \mu, \del, \theta)$ and let $\langle N_u : u \in [\bn]^{\leq \bk} \rangle$ 
be given, so this is a sequence of elementary submodels of $\bM$ which is in $(\bn, \bk, \mu)$-position and $x \in N_\emptyset$, thus 
$x \in N_u$ for each $u \in [\bn]^{\leq \bk}$. 
Suppose we are given $\langle \ma_u : u \in [\bn]^{\leq \bk} \rangle$ where each $\ma_u \in \ba^+ \cap N_u$ and $\ma^* = \bigcap_u \ma_u > 0_\ba$. 

Enumerate $[\bn]^{\leq \bk}$ as $\langle u_\ell : \ell < \bn^\bk \rangle$.  We shall choose $\ma^*_\ell$, $f_{u_\ell}$ by induction on 
$\ell \leq n$ such that $\ma^*_\ell \in \ba^+$, $\ma^*_{\ell+1} \leq \ma^*_\ell$, and $f_{u_\ell} \in N_{u_\ell}$ so that $\mx_{f_{u_\ell}} \cap \mx^*_\ell > 0$. 
Set $\ma^*_0 = \ma^*$.  Suppose $\ell < \bn^\bk$ and suppose $\ma^*_\ell$ has been defined. 
Working in $\bM$, we may choose in $\ba$, which remember means $\ba^{\bM}$, for each 
$u_\ell$ a maximal antichain $I_{u_\ell} = \langle \mx_f : f \in \fin_{\mu,\theta}(\alpha) \}$ supporting $\ma_{u_\ell}$, i.e., each member of the antichain is either 
$\leq \ma_{u_\ell}$ or disjoint to $\ma_{u_\ell}$.  Since $\ba$ has the $\mu^+$-c.c., this antichain will have cardinality $\leq \mu$, so in $\bM$ it may be without loss of generality  
enumerated by $\mu$. We are assuming $\mu + 1 \subseteq N_{u_\ell}$ and $\ma_{u_\ell} \in N_{u_\ell} \preceq \bM$ so we may assume $I_{u_\ell} \subseteq N_{u_\ell}$ 
where it remains a maximal antichain of $\ba^{N_{u_\ell}}$ supporting $\ma_{u_\ell}$.  
Since $I_{u_\ell}$ is a maximal antichain in $\ba$, working in $\bM$ we see that there is $f_{u_\ell}$ such that $\mx_{f_{u_\ell}} \in I_{u_\ell}$ and $\mx_{f_{u_\ell}} \cap \ma^*_\ell > 0$, 
hence $\mx_{f_{u_\ell}} \leq \ma_{u_\ell}$.
Since $I_{u_\ell} \subseteq N_{u_\ell}$, we also know $\mx_{f_{u_\ell}} \in N_{u_\ell}$.
Define $\ma^*_{\ell+1} := \ma^*_\ell \cap \mx_{f_{u_\ell}}$ in $\bM$. We can do this for each $\ell < \bn^\bk$ in turn. When we have finished, $f := \bigcup_\ell f_{u_\ell}$ is a function 
(since $\ba \models \bigcap_\ell \mx_{f_{u_\ell}} > 0$) and $\mx_{f} \leq \ma_u$ for each $\mu \in [\bn]^{\leq \bk}$, 
and moreover $\dom(f) \subseteq \bigcup \{ N_u : u \in [\bn]^{\leq \bk} \}$.  
So for any distinct 
$\epsilon_0, \dots, \epsilon_{n-1} \in \del \setminus \bigcup_u N_{u}$ and $\trv: n \rightarrow 2$,  
\[ f \cup \{ (\epsilon_i, \trv(i)) : i < n \} \in \fin_{\mu, \aleph_0} (\alpha) \] 
i.e. is a function, hence 
\[ 0 < \mx_f \cap \bigcap_{i < n} (\mx_{\epsilon_i})^{\trv(i)} \]
which is what we wanted to show. 
\end{proof}

\begin{conv}
For the rest of the section, we have in mind that 
\[ \lambda = \del \geq \mu^+ > \theta = \aleph_0. \] 
\end{conv}

We now turn to the construction of our Boolean algebras and ultrafilters on them by induction. 
Definition \ref{d:constr} explains the plan for our construction sequence.

\begin{defn} \label{d:constr}  \emph{ }

\begin{enumerate} 
\item[(A)] Let $\mathbf{p}$ contain:
\begin{enumerate}
\item[(1)] In general, a set of pairs $(T, \vp(\bar{x}, \bar{y}))$, where $T$ is a countable complete theory and $\vp$ is a formula of $T$. 
\\ In this section, the pairs we shall use are $(T_{rg}, \vp(x;\bar{y}) = R(x,y_0) \land \neg R(x,y_1))$, where $\trg$ is the theory of the random graph, or of the form 
$(T_\xm, Q_\nu(x) \land R(x,y))$ for one or more $\xm$'s. 
\item[(2)] a cardinal $\lambda$. 

\end{enumerate}
\item[(B)]
Say that $\bar{\mfa} = \langle \mfa_\alpha : \alpha < \alpha_* \rangle$ is a $\mathbf{p}$-\emph{construction sequence} when each 
$\mfa_\alpha$ is a pair $(\ba_\alpha, \de_\alpha)$ and these satisfy:
\begin{enumerate}
\item[(1)] Each $\ba_\alpha$ is a complete Boolean algebra and $\de_\alpha$ is an ultrafilter on $\ba_\alpha$. 
\item[(2)] $\ba_0 = \ba^1_{\del, \aleph_0, \aleph_0}$ and $\de_0$ is some fixed nonprincipal ultrafilter on $\ba_0$. 
\item[(3)] The sequence of Boolean algebras $\langle \ba_\alpha : \alpha < \alpha_* \rangle$ is increasing, and continuous
meaning that at limits we take the completion of the union.  Moreover, each 
$\ba_{\alpha} \lessdot \ba_{\alpha+1}$. 
\item[(4)] The sequence $\langle \de_\alpha : \alpha < \alpha_* \rangle$ of ultrafilters is increasing, and continuous \\ meaning that 
for limit $\gamma$, $\de_\gamma$ includes $\bigcup_{\beta < \gamma} \de_\beta$, if such an ultrafilter exists. 
\item[(5)] For each successor stage $\alpha = \beta+1$, for $\lambda$ and some pair $(T, \vp)$ from $\mathbf{p}$, 
$(\ba_\alpha, \de_\alpha)$ is a $(\lambda, T, \vp)$-extension of $(\ba_\beta, \de_\beta)$,
recalling $\ref{d:extn}$ above.
\end{enumerate}
\end{enumerate}
\end{defn}

\begin{conv}
In this section, again, $\mathbf{p}$ should contain the pair $(\trg, \vp(x,\bar{y}))$ where $\vp(x,\bar{y}) = R(x,y_0) \land \neg R(x,y_1))$, 
$\trg$ is the theory of the random graph and $R$ is the edge relation; and 
any or all pairs of the form $(T_\xm, Q_\nu(x) \land R(x,y))$ ranging over any set of parameters $\xm$. 
\end{conv}

Since the inductions will involve taking basic 
extensions in the sense of Defintion \ref{d:extn} above, we start by recalling that it has been verified that these extensions generally behave 
well, even before calling in properties of the theory. Fact \ref{f:1017} recalls that the successor step works, and 
Fact \ref{f:rmk} recalls that the limit step works. (However, we will check Fact \ref{f:rmk}(b) by hand below, since it  
will follow from stronger conditions we need to prove.)   

\begin{fact}[\cite{MiSh:1167} Claim 10.17] \label{f:1017} 
Suppose $(\ba_{\alpha+1}, \de_{\alpha+1})$ is a $(\lambda, T, \bar{\mb})$-extension of $(\ba_\alpha, \de_\alpha)$, for some theory $T$. Then $\ba_\alpha \subseteq \ba_{\alpha+1}$, 
indeed it is a complete subalgebra, in symbols  
 $\ba_\alpha \lessdot \ba_{\alpha+1}$.  Also, there exists an ultrafilter $\de$ on $\ba_{\alpha+1}$ 
 agreeing with  
$\de_\alpha$ on $\ba_\alpha$ and containing $\ba^1_\alpha$, hence $\de_{\alpha+1}$ is such an ultrafilter. 
\end{fact}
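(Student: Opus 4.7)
The plan is to verify the two assertions separately, both essentially by invoking Fact \ref{smooth}. Inclusion $\ba_\alpha \subseteq \ba_{\alpha+1}$ is immediate from Definition \ref{d:extn}: the generating set $\mcy_{\mfa,\mfb}$ contains all of $\ba_\alpha$, and the defining equations $\Gamma_{\mfa,\mfb}$ consist of the equations of $\ba_\alpha$ together with new inequalities $\bigcap_{\alpha \in u} \mb^1_{\{\alpha\}} \leq \mb_u$ that involve at least one new generator, so no fresh identifications among old elements are imposed. What takes work is upgrading this to $\ba_\alpha \lessdot \ba_{\alpha+1}$.

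For completeness, I would show every maximal antichain $A$ of $\ba_\alpha$ stays maximal in $\ba_{\alpha+1}$. Let $\ma^2 \in \ba_{\alpha+1}^+$. Apply Fact \ref{smooth}(1) to the singleton sequence $\langle \ma^2 \rangle$ to obtain $\ii = (\mx, u, \langle u_\ell : \ell < n \rangle)$ with $\mx \in \ba_\alpha^+$, $\mx \leq \mb_u$, $u \not\subseteq u_\ell$ for $\ell < n$, and
\[ 0 < \mx \cap \mb^1_u \cap \bigcap_{\ell<n}(-\mb^1_{u_\ell}) \leq \ma^2. \]
Since $A$ is a maximal antichain in $\ba_\alpha$, choose $a \in A$ with $a \cap \mx > 0$ in $\ba_\alpha$. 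Set $\ma_* := a \cap \mx \in \ba_\alpha^+$, so $\ma_* \leq \mb_u$. Then Fact \ref{smooth}(4), applied with this $\ma_*$ and singleton index set $X=\{\cdot\}$ using $\ii$, reduces the desired $\ma_* \cap \mb^1_u \cap \bigcap_\ell(-\mb^1_{u_\ell}) > 0$ to the purely ``positive'' inequality $\ma_* \cap \mb^1_u > 0$; and this holds because the only defining relation that could force $\ma_* \cap \mb^1_u = 0$ would have to be derivable from $\bigcap_{\alpha \in u}\mb^1_{\{\alpha\}} \leq \mb_u$, which only gives $\ma_* \cap \mb^1_u \leq \ma_* \cap \mb_u = \ma_* > 0$. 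Hence $a$ is compatible with $\ma^2$, proving maximality.

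For existence of the ultrafilter, let $F$ be the filter on $\ba_{\alpha+1}$ generated by $\de_\alpha \cup \{\mb^1_{\{\alpha\}} : \alpha < \lambda\}$. A basic finite intersection has the form $\ma \cap \mb^1_u$ with $\ma \in \de_\alpha$ and $u \in [\lambda]^{<\aleph_0}$. Since $\bar{\mb}$ is a possibility pattern for $(\ba_\alpha, \de_\alpha)$, $\mb_u \in \de_\alpha$, so $\ma \cap \mb_u \in \ba_\alpha^+$ and $\ma \cap \mb_u \leq \mb_u$; by exactly the freeness argument from the previous paragraph, $(\ma \cap \mb_u) \cap \mb^1_u > 0$ in $\ba_{\alpha+1}$, whence $\ma \cap \mb^1_u > 0$. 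Thus $F$ is proper, and Zorn's lemma produces an ultrafilter $\de$ on $\ba_{\alpha+1}$ extending $F$. Agreement is automatic: $\de_\alpha \subseteq F \subseteq \de$, and for any $\mx \in \ba_\alpha \setminus \de_\alpha$ the ultrafilter property of $\de_\alpha$ gives $-\mx \in \de_\alpha \subseteq \de$, so $\mx \notin \de$; hence $\de \cap \ba_\alpha = \de_\alpha$, and $\de_{\alpha+1}$ is such an ultrafilter by definition.

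The main obstacle is the recurring claim that $\mx \in \ba_\alpha^+$ with $\mx \leq \mb_u$ implies $\mx \cap \mb^1_u > 0$ in $\ba_{\alpha+1}$; this is the precise place where one uses the freeness of the extension modulo $\Gamma_{\mfa,\mb}$, and it is essentially the content packaged into Fact \ref{smooth}. Once that fact is in hand, as it is here (quoted from \cite{MiSh:1167}), the argument reduces to the bookkeeping above.
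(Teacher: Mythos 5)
The paper itself offers no proof of this statement: it is imported verbatim from \cite{MiSh:1167} (Claim 10.17), so the comparison has to be against the technique the paper uses in analogous places. Your overall architecture is the right one --- show maximal antichains of $\ba_\alpha$ stay maximal in $\ba_{\alpha+1}$, and show the filter generated by $\de_\alpha\cup\{\mb^1_{\{\alpha\}}:\alpha<\lambda\}$ has the finite intersection property --- but there is a genuine gap at exactly the step you flag as the crux: the claim that $\mx\in\ba_\alpha^+$ with $\mx\le\mb_u$ forces $\mx\cap\mb^1_u>0$ in $\ba_{\alpha+1}$. Your justification (``the only defining relation that could force $\ma_*\cap\mb^1_u=0$ would have to be derivable from $\bigcap_{\alpha\in u}\mb^1_{\{\alpha\}}\le\mb_u$, which only gives \dots'') is not a proof: to certify that an element of a presented algebra is nonzero one cannot reason informally about which inequalities are ``derivable''; one must exhibit a homomorphism under which the element has nonzero image. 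The same defect undermines your opening claim that $\ba_\alpha\subseteq\ba_{\alpha+1}$ is ``immediate'' because each new relation mentions a new generator --- that inference is false in general (adjoining $g$ with relations $a\le g$ and $g\le b$ mentions $g$ in every relation yet collapses $a\le b$). The missing ingredient, which the paper deploys repeatedly in exactly this situation (end of the proof of Claim \ref{c:42}, Case 3 of Lemma \ref{cc-lemma}, the map $\hat{h}_w$ in Lemma \ref{l:add-tnk-solution}), is a retraction homomorphism: the map fixing $\ba_\alpha$ and sending every $\mb^1_{\{\beta\}}$ to $0$ respects $\Gamma_{\mfa,\mb}$, hence factors through $\ba_{\alpha+1}$ and retracts it onto $\ba_\alpha$, giving the embedding; and the map fixing $\ba_\alpha$ and sending $\mb^1_{\{\beta\}}\mapsto\mb_u$ for $\beta\in u$ and $\mb^1_{\{\beta\}}\mapsto 0$ otherwise respects $\Gamma_{\mfa,\mb}$ by monotonicity of $\bar{\mb}$ and sends $\mx\cap\mb^1_u$ to $\mx\cap\mb_u=\mx>0$, which is precisely the positivity you need in both halves of your argument.

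A secondary caution: Fact \ref{smooth} is logically downstream of the present statement --- its formulation (elements $\mx_\alpha\in\ba^+_\mfa$ sitting below elements of $\ba^+_\mfb$, the relation $\bp$) already presupposes $\ba_\mfa\lessdot\ba_\mfb$, and in \cite{MiSh:1167} it appears after Claim 10.17. Using \ref{smooth}(1) as a density-of-generators statement is harmless once the inclusion is in hand, but it cannot carry the weight of establishing the inclusion or the freeness itself, so describing the key positivity claim as ``the content packaged into Fact \ref{smooth}'' is not accurate. With the homomorphism argument supplied, the remaining bookkeeping --- choosing $a\in A$ compatible with $\mx$, invoking \ref{smooth}(4) to discard the negative terms, and noting $\mb_u\in\de_\alpha$ so the generated filter is proper and restricts to $\de_\alpha$ --- is correct.
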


\begin{fact}[see \cite{Jech} Lemma 30.25] \label{proj-fact}
If $A$ is a complete subalgebra of a complete Boolean algebra $B$, thus a regular subalgebra of $B$, 
then for every $\mb \in B^+$ there exists $\ma \in A^+$ such that for every $\mx \in A^+$, if 
$0 < \mx \leq \ma$ in $A$ then $\mx \cap \mb > 0$ in $B$.
\end{fact}

\begin{fact} \label{f:rmk}  Not yet using any special properties of the theories, 
it follows from the basic properties of a construction sequence and Fact \ref{f:1017} that:
\begin{enumerate}
\item[(a)] $\ba_\alpha \subseteq \ba_\beta$ and indeed $\ba_\alpha \lessdot \ba_\beta$  for any $\alpha \leq \beta < \alpha_*$. 

\item[(b)] if $\alpha_*$ is a limit and each $\ba_\alpha$ satisfies the $\kappa$-c.c. for $\alpha < \alpha_*$, then 
also $\ba_{\alpha_*}$ satisfies the $\kappa$-c.c., see for instance \cite{Jech} Corollary 16.10.\footnote{This quotation is to give context; in our case, it will 
follow directly in the proof of \ref{c:42} below.}

\item[(c)]  As long as the enumeration of problems of some ~$~T$ ensures that each relevant problem is handled at some successor stage, 
the ultrafilter $\de_*$ on $\ba_* = \ba_{\alpha_*}$ will be moral for $T$, 
meaning that it will solve all its possibility patterns $\langle \mb_u : u \in [\lambda]^{<\aleph_0} \rangle$ $($because of the 
last clause of $\ref{f:1017}$$)$.  {A detailed account of the bookkeeping relevant here appears in the proof of \ref{c:constr} below.} 
\end{enumerate}
\end{fact}

What we have not yet established, of course, is whether we can carry out the construction of the sequence while keeping the 
Boolean algebras fairly constrained, as measured by their corresponding ability to \emph{not} solve problems for certain theories outside $\mct$. 
Here we will need to use specific properties of the theories in $\mct$.  
In the next few claims we establish that for our chosen $\mct$,  
elements of a construction sequence are as desired:  
the $\ba$'s have the $\mu^+$-c.c. and indeed are 
$(\mu^+, m)$-Knaster for any finite $m$ (\ref{c:42}), 
and the $\ba$'s satisfy the $(\bn, \bk, \mu)$-c.c.  (\ref{cc-lemma}).  (That these constraints suffice for omitting 
other types will be proved in \S \ref{s:n-s}.) Recall that: 

\begin{defn} \label{d:knaster}
Say that the Boolean algebra $\ba$ satisfies the $(\sigma, n)$-Knaster condition when: given $\ma_\epsilon \in \ba^+$ for $\epsilon < \sigma$, 
there is $\uu \in [\sigma]^\sigma$ such that if $u \subseteq \uu$, $|u| < 1 + n $ then $\bigcap \{ \ma_\epsilon : \epsilon \in u \} > 0$. 
\end{defn}

\begin{claim} \label{c:42}
Let $\bar{\mfa} = \langle \mfa_\alpha : \alpha < \alpha_* \rangle$ be a construction sequence. For every $\alpha < \alpha_*$, 
\begin{enumerate}
\item $\ba_\alpha$ satisfies the $\mu^+$-c.c. 
\item $\ba_\alpha$ satisfies the $(\mu^+, m)$-Knaster condition for every $m<\omega$, see $\ref{d:knaster}$.  
\end{enumerate}
\end{claim}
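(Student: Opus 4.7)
The plan is to prove (1) and (2) simultaneously by induction on $\alpha < \alpha_*$, noting that (2) with $m=2$ subsumes (1); so the real work is establishing the Knaster condition inductively. At the base, $\ba_0 = \ba^1_{\del, \aleph_0, \aleph_0}$ is (the completion of) a free Boolean algebra on countable partitions with all finite meets nonzero, hence is ccc, and a standard $\Delta$-system argument on finite supports promotes this to the $(\mu^+, m)$-Knaster condition for every $m < \omega$. At a limit $\alpha_*$, any sequence of $\mu^+$ nonzero elements of $\ba_{\alpha_*}$ can first be refined into the dense subalgebra $\bigcup_{\alpha < \alpha_*} \ba_\alpha$; by cases on $\cf(\alpha_*)$ (pigeonholing the stages when $\cf(\alpha_*) \leq \mu$, and using regularity of $\mu^+$ when $\cf(\alpha_*) \geq \mu^+$) one places $\mu^+$ of these refinements into some common $\ba_{\gamma^*}$ with $\gamma^* < \alpha_*$, and since $\ba_{\gamma^*} \lessdot \ba_{\alpha_*}$ meets are computed correctly, so the inductive hypothesis in $\ba_{\gamma^*}$ suffices.

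The successor step is the main work. Suppose $(\ba_{\alpha+1}, \de_{\alpha+1})$ is a $(\lambda, T)$-extension of $(\ba_\alpha, \de_\alpha)$ with $T \in \mct$, and let $\langle \ma^2_\beta : \beta < \mu^+ \rangle$ be a sequence of positive elements of $\ba_{\alpha+1}$. Fact \ref{smooth}(1)--(3) gives each $\ma^2_\beta$ a normal form representative $\mx_\beta \cap \mb^1_{u_\beta} \cap \bigcap_{\ell < n_\beta}(-\mb^1_{u_{\beta,\ell}})$ with $\mx_\beta \in \ba^+_\alpha$, $\mx_\beta \leq \mb_{u_\beta}$, and $\mx_\beta \bp \ma^2_\beta$. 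Then Fact \ref{smooth}(2) smooths these onto a $\Delta$-system on some $\uu \in [\mu^+]^{\mu^+}$ with common skeleton $w_*, u_*, n_*, \langle u^*_\ell : \ell < n_* \rangle$. By the inductive hypothesis, $\ba_\alpha$ is $(\mu^+, m)$-Knaster, so there is $\uu' \in [\uu]^{\mu^+}$ with $\bigcap_{\beta \in F} \mx_\beta > 0$ in $\ba_\alpha$ for every $F \in [\uu']^m$. For such an $F$, Fact \ref{smooth}(4) reduces the desired inequality $\bigcap_{\beta \in F} \ma^2_\beta > 0$ to checking $\mx_* \cap \mb^1_v > 0$ in $\ba_{\alpha+1}$, where $\mx_* := \bigcap_{\beta \in F} \mx_\beta$ and $v := \bigcup_{\beta \in F} u_\beta$. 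Since $\ba_{\alpha+1}$ imposes only the relations $\mb^1_w \leq \mb_w$, an elementary calculation in the extension (extend any ultrafilter on $\ba_\alpha$ containing $\mx_* \cap \mb_v$ to $\ba_{\alpha+1}$ by assigning $1$ to $\mb^1_{\{\gamma\}}$ for $\gamma \in v$) shows this is equivalent to $\mx_* \cap \mb_v > 0$ in $\ba_\alpha$.

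The main obstacle is therefore the final step: passing from positivity of $\mx_*$ and the individual bounds $\mx_\beta \leq \mb_{u_\beta}$ to positivity of $\mx_* \cap \mb_v$, where $v$ is the union of the $u_\beta$'s. This is where the structural assumption $T \in \mct$ is used. Because each $T \in \mct$ is $1167$-like (Definition \ref{d:1167}), consistency of a partial type $\{ \vp(\bar x, \bar a_\alpha) : \alpha \in v \}$ can be certified by bounded initial segments $f_{\bar a_\alpha} \rstr n$ of each parameter's type over $\emptyset$. One thins $\uu'$ once more so that for $\beta \in \uu'$ the parameters indexed by the disjoint ``new parts'' $u_\beta \setminus u_*$ all have a uniformly chosen such initial segment; combined with the $\Delta$-system disjointness of those new parts and the compatibilities packaged in $\mx_\beta \leq \mb_{u_\beta}$, this forces $\mx_* \cap \mb_v > 0$. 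Everything else is orchestration of Fact \ref{smooth} and the inductive hypothesis.
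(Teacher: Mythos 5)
Your overall architecture matches the paper's: base case by a $\Delta$-system argument on finite supports in the free algebra; limit case by refining into the dense union $\bigcup_\alpha \ba_\alpha$; successor case by the normal form of Fact \ref{smooth}, a $\Delta$-system on the $u_\beta$'s, the inductive Knaster hypothesis applied to the $\mx_\beta$'s, and a homomorphism argument reducing everything to showing $\bigcap_{\beta\in F}\mx_\beta\leq\mb_v$. Your treatment of that last point via $1167$-likeness is essentially the paper's Case 1 and is fine for the $T_\xm$ steps (with the implicit proviso, which you should make explicit, that each $\mx_\beta$ is first shrunk so as to \emph{decide} the relevant finite fragment of each parameter's type). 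The genuine gap is that Definition \ref{d:constr}(5) also allows the successor to be a $(\lambda,\trg)$-extension, where the possibility pattern comes from $\vp(x;y_1,y_0)=R(x,y_1)\wedge\neg R(x,y_0)$, and this pair is \emph{not} $1167$-like: consistency of a set of such instances is governed by whether a positively occurring parameter coincides with a negatively occurring one \emph{across different instances}, which is invisible to the types of the individual parameter tuples over $\emptyset$ --- the only data Definition \ref{d:1167} records. (Concretely, the pairs $(a,b),(c,d)$ with all four distinct give a consistent pair of instances, while $(a,b),(b,e)$ have the same tuple-types over $\emptyset$ but are jointly inconsistent.) So your uniformization of initial segments of parameter types cannot force $\bigcap_\beta\mx_\beta\leq\mb_v$ in the $\trg$ case. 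The paper handles this with a separate argument: each $\mx_\epsilon$ is shrunk to decide to which earlier parameter each of its parameters collapses under equality, the sets of positive and negative collapse-values are made into two $\Delta$-systems with disjoint hearts (disjointness being extracted from $\mx_\epsilon\leq\mb_{u_\epsilon}$), and pairwise consistency, which suffices for the random graph, then yields the bound. Some such argument is needed to close your successor step.

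A secondary issue is the limit case at cofinality exactly $\mu^+$: your claim that one can place $\mu^+$ of the refinements into a single $\ba_{\gamma^*}$ with $\gamma^*<\alpha_*$ fails there, since the refinements may sit cofinally along the chain, and regularity of $\mu^+$ alone does not rescue this. The paper instead presses down: each refinement lies in some $\ba_{f(i_\beta)}$ with $f(i_\beta)<i_\beta$, and Fodor's lemma gives a stationary set of size $\mu^+$ on which $f$ is bounded by a single $\gamma<\mu^+$, after which the inductive hypothesis applies in $\ba_\gamma$. This is the standard argument for preservation of chain conditions along increasing chains of complete subalgebras, but it is not the argument you wrote, and the case $\cf(\alpha_*)=\mu^+$ does need it.
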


\begin{proof}
For Claim \ref{c:42}, it will suffice to prove that each $\ba_\alpha$ satisfies the Knaster condition since this a fortiori implies the $\mu^+$-c.c. 
The proof is by induction on $\alpha$. We will split the proof into two parts for easier reading: limit stages (including zero) and successor stages.

We shall use freely that for any ordinal $\alpha$, in particular, 
\[ (\star) \hspace{10mm} \mbox{for any limit $\alpha$, } \bigcup \{ \ba_\beta : \beta < \alpha \} \mbox{ is dense in } \ba_\alpha. \] 
So if we are given $\ma \in \ba_\alpha$, in most cases without loss of generality we can assume $\ma \in \bigcup \{ \ba_\beta : \beta < \alpha \}$. 

\begin{proof}[Proof of Claim \ref{c:42} for $\alpha = 0$ or $\alpha$ a nonzero limit.]  
For $\alpha = 0$, recall that in defining a construction sequence, $\ba_0 = \ba^1_{\del, \aleph_0, \aleph_0}$ 
is the completion of a free Boolean algebra with enough antichains.  Suppose then that we are given $\langle \ma_\epsilon : \epsilon < \mu^+ \rangle$
and $m<\omega$. 
For each $\ma_\epsilon$ we may choose 
$f_\epsilon \in \fin_{\aleph_0, \aleph_0}(\alpha_0)$ such that $\mx_{f_\epsilon} \leq \ma_\epsilon$. Let $u_\epsilon = \dom(f_\epsilon) 
\in [\alpha_0]^{<\aleph_0}$. By the $\Delta$-system lemma, for some finite $u_*$ and some $X \in [\mu^+]^{\mu^+}$, 
we have $\epsilon \neq \delta \in X$ implies $u_\epsilon \cap u_\delta = u_*$.  As the range of each $f_\epsilon$ is a subset of 
$\aleph_0$, we can further restrict to $\uu \in [X]^{\mu^+}$ so that $\epsilon, \delta \in \uu$ implies 
$f_\epsilon \rstr u_* = f_\delta \rstr u_*$.  Then for any finite $u \subseteq \uu$, we have that 
$\bigcup \{ f_\epsilon : \epsilon \in u \}$ is a function, thus $\bigcap \{ \mx_{f_\epsilon} : \epsilon \in u \}$ 
is nonzero, thus $\bigcap \{ \ma_{\alpha_\epsilon} : \epsilon \in u \}$ is nonzero, which is stronger than $(\mu^+, m)$-Knaster 
as we did not require $|u| < 1+m$. 

\br 
\noindent \emph{For $\alpha$ limit of cofinality $\neq \mu^+$}: Suppose we are given 
$\langle \ma_\epsilon : \epsilon < \mu^+ \rangle$.   By $(\star)$, 
without loss of generality for each $\epsilon < \mu^+$, $\ma_{\epsilon} \in \bigcup \{ \ba_\beta : \beta < \alpha \}$, hence 
there is $\gamma < \alpha$ 
such that $\{ \epsilon < \mu^+ : \ma_\epsilon \in \ba_\gamma \}$ has size $\mu^+$, so we may 
apply the inductive hypothesis. 

\br
\noindent \emph{For $\alpha$ limit of cofinality $\mu^+$:}  
Suppose that we are given a sequence $\bar{\mc} = \langle \mc_\gamma : \gamma < \mu^+ \rangle$ of elements of $\ba^+_\alpha$. 
Fix a strictly increasing and continuous sequence of ordinals $\bar{\iota} = \langle i_\gamma : \gamma < \mu^+ \rangle$ whose limit is $\alpha$.  
By $(\star)$, without loss of generality each $\mc_\gamma \in \bigcup_{\gamma < \mu^+} \ba^+_{i_\gamma}$. 
So for each $\gamma < \mu^+$, there is $\zeta(\gamma) \in (\gamma, \mu^+)$ such that $\mc_\gamma \in \ba_{i_{\zeta(\gamma)}}$. As $\gamma < \zeta(\gamma)$, 
we know (\ref{f:1017}) that $\ba_{i_\gamma} \lessdot \ba_{i_{\zeta(\gamma)}}$ and so (\ref{proj-fact}) there is 
$\mb_\gamma \in \ba^+_{i_\gamma}$ such that for any $\mx \in \ba_{i_\gamma}$, if $\ba_{i_\gamma} \models 0 < \mx \leq \mb_\gamma$, then 
$\ba_{i_{\zeta(\gamma)}} \models \mx \cap \mc_\gamma > 0$. 

 [We can say $\mb_\gamma$ is a projection of $\mc_\gamma$ to $\ba_{i_\gamma}$.]

Let $\langle \mb_\gamma : \gamma < \mu^+ \rangle$ be the sequence of elements defined in this way. 
Since we chose $\bar{\iota}$ to be increasing and continuous, for every limit $\gamma < \mu^+$, $\bigcup_{\epsilon < \gamma} \ba_{i_\epsilon}$ is dense in $\ba_{i_\gamma}$. 
So for every limit $\gamma < \mu^+$, as $\mb_\gamma \in \ba_{i_\gamma}$ we can choose $\ma_\gamma \in \bigcup_{\epsilon < \gamma} \ba^+_{i_\gamma}$ such that 
$\ba_{i_\gamma} \models 0 < \ma_\gamma \leq \mb_\gamma$, and $\ma_\gamma \in \ba^+_{i_{f(\gamma)}}$ for some $f(\gamma) < \gamma$. 

The function $\gamma \mapsto f(\gamma)$ is defined and regressive on the limit ordinals $\gamma < \mu^+$, so by Fodor there is $\gamma_* < \mu^+$ so that 
$\vv = \{ \gamma < \mu^+ : \gamma $ is a limit and $f(\gamma) < \gamma_* \}$ is stationary.  Let $E$ be a closed unbounded subset of $\mu^+$ with the property that 
$\gamma < \gamma^\prime \in E$ implies $\zeta(\gamma) < \gamma^\prime$.  Finally, let $\uu = \vv \cap E$, also stationary of size $\mu^+$.  

We claim it suffices to apply the inductive hypothesis to $\langle \ma_\gamma : \gamma \in \uu \rangle$ in $\ba_{i_{\gamma_*}}$. Why? Let us verify by induction on $k$ 
that for any finite $k$, 
and for any $\gamma_0 <  \cdots < \gamma_{k-1}$ from $\uu$, and any $\mx \in \ba^+_{i_{\gamma_*}}$, 
if $\bigcap \{ \ma_{\gamma_0}, \dots, \ma_{\gamma_{k-1}}\} \geq \mx > 0$ in $\ba_{i_{\gamma_*}}$ then 
 $\bigcap \{ \mc_{\gamma_0}, \dots, \mc_{\gamma_{k-1}} \} \cap \mx > 0$ in $\ba_{\alpha}$. 
When $k = 1$, suppose we are given $\gamma \in \uu$ and $0 < \mx \leq \ma_\gamma$ in $\ba_{i_{\gamma_*}}$. 
By construction $\mx \leq \ma_\gamma \leq \mb_\gamma$ in $\ba_{i_\gamma}$, so by definition of 
$\mb_\gamma$ as a projection, we have that $\mx \cap \mc_\gamma > 0$ in $\ba_{\alpha}$, as desired.   Suppose then that $k>1$ and 
we are given $\gamma_0 < \cdots < \gamma_k$ from $\uu$ and $\mx \in \ba^+_{i_{\gamma_*}}$ so that 
$\ma_{\gamma_0} \cap  \cdots \cap \ma_{\gamma_{k-1}} \geq \mx > 0$ in $\ba_{i_{\gamma_*}}$. By inductive hypothesis, 
 $\mx^\prime := \mx \cap \mc_{\gamma_0} \cap \cdots \cap \mc_{\gamma_{k-1}}$ is $>0$ in $\ba_{\alpha}$. 
 Letting $j = \max \{ \gamma_*, \zeta(\gamma_{k}) \}$, all of $\mx,  \mc_{\gamma_0}, \dots, \mc_{\gamma_{k}}$ belong to $\ba_{i_{j}}$, 
 because of the definition of $E$.  So in $\ba_{i_j}$, $\mx^\prime$ is defined and positive.  Also as $j \geq \gamma_*$, 
 $\ba_{i_j} \models 0 < \mx^\prime \leq \ma_{k} \leq \mb_k$. So by definition of $\mb_k$ as a projection, $\mx^\prime \cap \mc_k > 0$ in $\ba_\alpha$. 
We conclude by definition of $\mx^\prime$ that in $\ba_\alpha$, $\mx^\prime \cap \mc_{\gamma_0} \cap \cdots \cap \mc_{\gamma_{k}} > 0$, and as 
$\mx^\prime \leq \mx$ this suffices.
\end{proof}

Before continuing, we note for the interested reader that the case of $\alpha$ limit of cofinality $\mu^+$ still looks 
quite like \cite{MiSh:1167} 8.18.  Now things start to diverge a bit.  

\begin{proof}[Proof of Claim \ref{c:42} for $\alpha = \beta + 1$.]
There will be two cases depending on whether $T$ is $\trg$ (the theory of the random graph) or $T_\xm$, but they have a common beginning.  

Suppose we are given $m<\omega$ and $\langle \ma_\epsilon : \epsilon < \mu^+ \rangle$ a sequence of positive elements of $\ba_\beta$. 
By the normal form lemma \ref{smooth}, for each $\epsilon$ there are $u_\epsilon \in [\mu^+]^{<\aleph_0}$ and 
$\mx_\epsilon \leq \mb_{u_\epsilon}$ in $\ba^+_\beta$, and 
\[ \ba_\alpha \models  0 < \mx_\epsilon \cap \mb^1_{u_\epsilon} \cap \bigcap_{\ell < n_\epsilon} (- \mb^1_{u_{\epsilon,\ell}})  \leq \ma_\epsilon. \] 
Let $u^+_\epsilon = \bigcup \{ u_{\epsilon, \ell} : \ell < n_\epsilon \} \cup u_\epsilon$. 
Moreover, as in \ref{smooth}, we may move to a subset $\uu_0 \subseteq \mu^+$ of size $\mu^+$ where the 
$u^+_\epsilon$'s all have the same integer size and form a $\Delta$-system with root (or heart) $u_*$, 
and also $\langle u_\epsilon \cap u_* : \epsilon \in \uu_0 \rangle$ is constantly $u_{**}$. Now as in the proof of \ref{smooth}(2), (4), 
we may without loss of generality ignore the negative terms. 
So it will suffice to show that some subset of 
$  \{ \mx_\epsilon \cap \mb^1_{u_\epsilon}  : \epsilon \in \uu_0 \} $ 
witnesses the $(\mu^+, m)$-Knaster condition. 
Also, since the elements $\mx_\epsilon$ all belong to $\ba_\alpha$, by 
inductive hypothesis Knaster applies to $\ba_\alpha$ so we may move to $\uu_1 \subseteq \uu_0$, $|\uu_1| = \mu^+$ 
such that any $m$ elements of 
\[ \{ \mx_\epsilon : \epsilon \in \uu_1 \} \]
have nonempty intersection. So it suffices to show that 
\[  \{ \mx_\epsilon \cap \mb^1_{u_\epsilon}  : \epsilon \in \uu_1 \} \]
witnesses the $(\mu^+, m)$-Knaster condition.  For \emph{this} it suffices to show that for some 
$\uu_2 \in [\uu_1]^{\mu^+}$, for any $U \subseteq \uu_2$ with $|U| \leq m$, 
\[  \bigcap \{ \mx_\epsilon  : \epsilon \in U \} \leq \mb_W
\mbox{ where $W = \bigcup_{\epsilon \in U } u_\epsilon$ }.  \] 
Recall that characteristic sequences are monotonic in the sense that $v \subseteq u$ implies $\mb_u \leq \mb_v$. 
(So the reason this is sufficient is that there is then a homomorphism 
from $\ba_{\beta}$ to $\ba_\alpha$ which is the identity on $\ba_\alpha$ 
and sends each $\mb^1_{u_\epsilon}$ ($\epsilon \in U$) to $\mb_W$. Under this homomorphism, 
the image of the intersection $\{ \mx_\epsilon \cap \mb^1_{u_\epsilon} : \epsilon \in U \}$ would reduce to the 
intersection of the corresponding $\mx_\epsilon$'s so in particular would be nonzero.)

At this point, there are two cases depending on the kind of theory. Recall from \S \ref{s:locality} above that dealing with the formulas mentioned 
in each case will suffice.\footnote{We will not use this here, but note that the case where $(T, \vp)$ has the pseudo-nfcp in the sense of \S \ref{s:pnfcp} above would be similar 
to Case 1.}

\br
\noindent \underline{Case 1.} Assume $T = T_\xm$ and $\vp(x,y) = Q_\nu(x) \land R(x,y)$. 
Let $s$ be the common size of $|u_\epsilon|$ for $\epsilon \in \uu_1$ and recall $m$ was given at the beginning of the proof (in the third line of $\alpha = \beta + 1$). 
It follows from the 
axioms for $T_\xm$ that there is some finite $M = M(s,m)$ such that:
\begin{quotation}
\noindent $(\star)$ Suppose we have $m$ subsets of $\lambda$, $u_0, \dots, u_{m-1}$ each of size $s$ 
and for each $i<m$, $u_i = \{ \gamma_{i,\ell} : \ell < s \}$ and 
$\{ R(x,a_{\gamma_{i,\ell}}) : \ell < s \}$ is a partial type in the 
monster model for $T_\xm$.  For each $i < m$, let $\bar{p}_i = \langle p_{i,\ell} : \ell < s \rangle$ 
where $p_{i,\ell}$ is the quantifier-free type of $a_{\gamma_{i,\ell}}$ in the language 
$\{ P_\eta : \eta \in \mct_2, |\eta| \leq M \}$.  Suppose that the sequence $\langle \bar{p}_i : i < m \rangle$ 
is constant.  Then 
\[ \{ Q_\nu(x) \land R(x,a_{\gamma_{i,\ell}}) : i < m, \ell < s \} \] 
is also a partial type. 
\end{quotation} 
Let $M = M(s,m)$. 
Since $\bar{\mb}$ was a possibility pattern for $(T_\xm, \vp)$, 
$\mb_{u}$ represents a set of formulas of the form $\{ Q_\nu(x) \land R(x,a_\gamma) : \gamma \in u \}$. 
So for each $\epsilon \in \uu_1$, without loss of generality (since we could have replaced each $\mx_\epsilon$ by something smaller and still positive 
before moving from $\uu_0$ to $\uu_1$)  
$\mx_\epsilon$ decides $\ma[\vt(a_\gamma)]$ for $\gamma \in u_\epsilon$ and $\vt$ ranging over the predicates from the first $M$ levels of $\mct_2$.
Fix an enumeration of each $u_\epsilon$ in which all elements of $u_* \cap u_\epsilon$ come before all elements of 
$u_\epsilon \setminus u_*$. Let $\uu_2 \subseteq \uu_1$, $|\uu_2| = \mu^+$ be such that the sequence  
$\langle p_{\epsilon, \ell} : \ell < s \rangle$ is constant for $\epsilon \in \uu_2$.
Recall that the normal form \ref{smooth}(1) ensures (already in $\ba_\alpha$) that each 
$\mx_\epsilon \leq \mb_{u_\epsilon}$. Thus on $\mx_\epsilon$, 
each $\{ Q_\nu(x) \land R(x,a_\gamma) : \gamma \in u_\epsilon \}$ is a partial type. Now $(\star)$ and the choice of $M$ 
tell us that for any $U \subseteq \uu_2$, 
$|U| \leq m$, the set 
\[ \{ \{ Q_\nu(x) \land R(x,a_{\gamma}) : \gamma \in u_\epsilon \} : \epsilon \in U \} \]
is also a partial type, thus (by definition of possibility pattern)
\[  \bigcap \{ \mx_\epsilon  : \epsilon \in U \} \leq \mb_w
\mbox{ where $W = \bigcup_{\epsilon \in U } u_\epsilon$ }  \] 
which is, as noted above, sufficient. 

\br
\noindent \underline{Case 2.} 
Assume $T_\beta = \trg$, the theory of the random graph, and $\vp_\beta(x;y,z) = R(x,y_1) \land \neg R(x,y_0)$ where $R$ is the edge relation. 
So our possibility pattern reflects a background type of the form 
\[ \{ R(x,a_{(\gamma,1)}) \land \neg R(x,a_{(\gamma,0)}) : \gamma < \lambda \}. \] 
It will be useful to have a well-ordering of all parameters mentioned in the type, so use the lexicographic ordering on 
$\{ (\gamma, \ii) : \gamma < \lambda, \ii < 2 \}$. (This is just first coordinate, then second coordinate, so interpolates the positive and 
negative -- as in \cite{MiSh:1009}.) 
Now for every $\gamma \in u_\epsilon$ there is some $\mx$ with $0 < \mx \leq \mx_\epsilon$
and $\mx$ ``decides equality'' for $(\gamma, \ii)$, which means that there is $(\gamma^\prime, \ii^\prime) \leq_{\lex} (\gamma, \ii)$ such that 
\[  \mx \leq \ma[ a_{(\gamma, \ii)} = a_{(\gamma^\prime, \ii^\prime)}] \mbox{ and for all } (\gamma^{\prime\prime}, \ii^{\prime\prime}) <_{\lex} (\gamma^\prime, \ii^\prime), ~~  \mx \cap \ma[ a_{(\gamma, \ii)} = a_{(\gamma^\prime, \ii^\prime)}] = 0. \]
(This just uses the well ordering and the observation that $\mx_\epsilon \leq \ma[ a_{(\gamma, \ii)} = a_{(\gamma, \ii)}]$.)  
Since each $u_\epsilon$ is finite, we can assume, 
without loss of generality, that each $\mx_\epsilon$ decides equality for its set of indices $\{ (\gamma, \ii) : \gamma \in u_\epsilon, \ii < 2 \}$. For each 
$\epsilon < \mu^+$, define $f_\epsilon$ to be the function with domain $\{ (\gamma, \ii) : \gamma \in u_\epsilon, \ii < 2 \}$ and range 
$\{ (\gamma, \ii) : \gamma < \lambda, \ii < 2 \}$ which ``records the value of the collapse,'' that is, $(\gamma, \ii) \mapsto (\gamma^\prime, \ii^\prime)$ 
for the unique value satisfying the equation above for $\mx = \mx_\epsilon$. 

Now we use the $\Delta$-system lemma to smooth out the sets to which the positive and negative values collapse. That is, 
for each $\epsilon < \mu^+$ let $v_{\epsilon, 0} = \{ f_\epsilon(\gamma, 0) : \gamma \in u_\epsilon \}$ and let 
$v_{\epsilon, 1} = \{ f_\epsilon(\gamma, 1), : \gamma \in u_\epsilon \}$.  Choose $\uu_2 \in [\uu_1]^{\mu^+}$ so that 
$\langle v_{\epsilon, 1} : \gamma < \mu^+ \rangle$ and $\langle v_{\epsilon, 0} : \epsilon < \mu+ \rangle$ are each $\Delta$-systems, with hearts 
$v^*_1$ and $v^*_0$ respectively. 
Since $\mx_\epsilon \leq \mb_{u_\epsilon}$, necessarily 
$v_{\epsilon, 0} \cap v_{\epsilon_1} = \emptyset$ for each $\epsilon$, so also $v^*_1 \cap v^*_0 = \emptyset$.   
Now for any $\epsilon, \epsilon^\prime \in \uu_2$, without loss of generality 
we have that 
$( v^0_\epsilon \cup v^0_{\epsilon^\prime} ) \cap  ( v^1_\epsilon \cup v^1_{\epsilon^\prime}) = \emptyset$: thus, 
\[ \mx_{\epsilon} \cap \mx_{\epsilon^\prime} \leq \mb_w \mbox{ where } w = u_\epsilon \cup u_{\epsilon^\prime}. \]
Since, in the random graph, consistency of a set of instances of $\vp$ follows from the consistency of every two such instances, this implies that 
for any finite $U \subseteq \uu_2$ (not just those of size $m$),  
\[ \bigcap_{\epsilon \in U} \mx_\epsilon \leq \mb_W \mbox{ where $W = \bigcup_{\epsilon \in U} u_\epsilon$ } \]
which completes the proof. 
\end{proof}

\noindent This completes the proof of Claim \ref{c:42}. \end{proof}

\begin{lemma} \label{cc-lemma}
Let $\bar{\mfa} = \langle \mfa_\alpha : \alpha < \alpha_* \rangle$ be a construction sequence. Then for every $\alpha \leq \alpha_*$ and every 
$\bk, \bn$ with $\bk < \bn < \omega$, our 
$(\ba_\alpha, \bar{\mx})$ has the $(\bn, \bk, \mu)$-c.c. when $\bar{\mx} = \langle \mx_\epsilon = 
\mx_{\{ (\epsilon, 0) \}} : \epsilon < \del \rangle$ from $\ba_0$. 
\end{lemma}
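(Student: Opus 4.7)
The plan is to proceed by induction on $\alpha < \alpha_*$. The base case $\alpha = 0$ is exactly Claim \ref{g20}, since the distinguished sequence $\bar{\mx}$ sits inside the canonical generating system of $\ba_0 = \ba^1_{\del, \aleph_0, \aleph_0}$. For a nonzero limit $\alpha$, given the hypotheses (A) of \ref{g2a} with $\ma_u \in \ba_\alpha \cap N_u$ and $\bigcap_u \ma_u > 0$, I would approximate each $\ma_u$ from below by elements of some earlier $\ba_\beta$: inside each $N_u$, using that the chain $\langle \ba_\beta : \beta \leq \alpha \rangle$ is coded by $x$, choose a maximal antichain $I_u$ below $\ma_u$ drawn from $\bigcup_{\beta<\alpha}\ba_\beta$. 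Since $|I_u|\leq\mu$ by the $\mu^+$-c.c.\ (Claim \ref{c:42}), we get $I_u\subseteq N_u$. Distributivity of finite intersection over suprema in the complete algebra $\ba_\alpha$ yields $\mb_u\in I_u$ with $\bigcap_u\mb_u>0$, and a finite maximum produces $\beta^*<\alpha$ with all $\mb_u\in\ba_{\beta^*}\cap N_u$. The inductive hypothesis, applied to $\ba_{\beta^*}$ (or to some $\ba_\gamma$ with $\gamma\in N_\emptyset\cap\alpha$ and $\gamma\geq\beta^*$, whose existence is arranged by a cofinality analysis in the spirit of the limit case of Claim \ref{c:42} and \cite{MiSh:1167} 8.18), delivers the crosscutting.

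The main work is the successor case, $\alpha=\beta+1$, a $(\lambda,T)$-extension for some $T\in\mct$. Here I would follow the template of Claim \ref{c:42} case 1. Given $\ma_u\in\ba_\alpha\cap N_u$ with $\bigcap_u\ma_u>0$, the normal form lemma (Fact \ref{smooth}) produces in each $N_u$ a maximal antichain $I_u\subseteq N_u$ of positive elements below $\ma_u$ of shape $\mx\cap\mb^1_v\cap\bigcap_\ell(-\mb^1_{v_\ell})$ with $\mx\in\ba_\beta^+$ and $\mx\leq\mb_v$. Distributivity gives $\mb_u=\mx_u\cap\mb^1_{v_u}\cap\bigcap_\ell(-\mb^1_{v_{u,\ell}})\in I_u$ with $\bigcap_u\mb_u>0$. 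Set $W=\bigcup_u v_u$, $s=|W|$, and take $M=n(s)$ from the 1167-like property (Definition \ref{d:1167}) of $T$, which holds for $T_\xm$ by \cite{MiSh:1206} and trivially for $\trg$. Refine further inside each $N_u$ by partitioning $\mx_u$ into the finitely many pieces according to the first-$M$-level quantifier-free type of the parameter tuple indexed by $v_u$, and apply distributivity once more to keep $\bigcap_u\mb_u>0$ while forcing each $\mx_u$ to decide these types.

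The key step is to establish $\bigcap_u\mx_u\leq\mb_W$ in $\ba_\beta$. From $\bigcap_u\mb_u>0$ together with Fact \ref{smooth}(3)--(4), one gets $\bigcap_u\mx_u\cap\mb_W>0$ in $\ba_\beta$. Meanwhile the refinement ensures that $\bigcap_u\mx_u$ determines a specific first-$M$-level type of every $a_\gamma$ with $\gamma\in W$ (compatibly on overlaps, by positivity of the intersection); the 1167-like property then dictates that consistency of $\{R(x,a_\gamma):\gamma\in W\}$ is a function of these first-$M$-level types, so $\bigcap_u\mx_u$ must be either entirely $\leq\mb_W$ or disjoint from $\mb_W$, and positivity forces the former. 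With this in hand, apply the inductive hypothesis on $\ba_\beta$ to $\mc_u:=\mx_u\in\ba_\beta\cap N_u$ (using $\bigcap_u\mc_u>0$ and $\epsilon_i\notin\bigcup_u N_u$) to obtain $\bigcap_u\mx_u\cap\bigcap_i\mx_{\epsilon_i}^{\trv(i)}>0$ in $\ba_\beta$; combining with $\bigcap_u\mx_u\leq\mb_W$, then projecting back up to $\ba_\alpha$ and reinserting the negative terms via \ref{smooth}(4), gives $\bigcap_u\mb_u\cap\bigcap_i\mx_{\epsilon_i}^{\trv(i)}>0$, as desired. The case $T=\trg$ is handled analogously, with the simpler normal form analysis of Claim \ref{c:42} case 2.

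The hard part will be the key step, securing $\bigcap_u\mx_u\leq\mb_W$. In Claim \ref{c:42} case 1 one could exploit pigeonhole among $\mu^+$-many elements to synchronize first-$M$-level types across indices, but with only $\bn^\bk$-many indices $u\in[\bn]^{\leq\bk}$ available here there is no room for pigeonhole; the synchronization must instead come internally, by choosing $M$ only after the $v_u$'s (hence $W$) are determined and then iterating the refinement-plus-distributivity step so that every refined $\mx_u$ remains inside its respective $N_u$ and the finitely many choices combine compatibly.
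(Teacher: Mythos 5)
Your successor-stage analysis is essentially the right one (normal form, deciding finitely many formulas inside each $N_u$, the homomorphism onto the earlier algebra sending the $\mb^1$'s to $\mb_W$, and --- correctly --- getting $\bigcap_u \mx_u \le \mb_W$ from positivity of the common intersection rather than from a pigeonhole over $\mu^+$ indices; this matches the paper's Case 3). But the overall induction scheme has a genuine gap, and it is exactly the ``delicate point'' the paper flags before setting up its proof: you induct on $\alpha$, and at a limit stage you reduce each $\ma_u$ to some $\mb_u \in \ba_{\beta_u} \cap N_u$ and then take $\beta^* = \max_u \beta_u$ so as to invoke the inductive hypothesis for the single algebra $\ba_{\beta^*}$ with the \emph{same} system $\langle N_u \rangle$. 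That invocation requires the hypothesis (A) of Definition \ref{g2a} to hold for $\ba_{\beta^*}$ and this system, i.e.\ a code for $\ba_{\beta^*}$ must lie in all the models --- in effect $\beta^*$ must be visible to every $N_u$. There is no reason for this: each $\beta_u$ lives only in its own $N_u$, and the maximum can exceed every ordinal of $\bigcap_u N_u$ below $\alpha$ (indeed when $\cf(\alpha) > \mu$ the trace of any single model on $\alpha$ is bounded, and a $\beta_{u_0} \in N_{u_0}$ can sit above the supremum of $N_\emptyset \cap \alpha$). Your parenthetical repair --- find $\gamma \in N_\emptyset \cap \alpha$ with $\gamma \ge \beta^*$ by a ``cofinality analysis'' --- therefore fails; no such $\gamma$ need exist, and the Fodor-style argument from Claim \ref{c:42} does not transfer because there one is free to pass to a $\mu^+$-sized subfamily, whereas here all $\bn^{\bk}$ models must be retained.

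The paper avoids this by never synchronizing to a common stage. It fixes the system $\bar N$ once, defines the set $\mcs$ of tuples $\bar\beta = \langle \beta_u : u \in [\bn]^{\le\bk}\rangle$ with each $\beta_u \in (\alpha_*+1)\cap N_u$, equips $\mcs$ with a bespoke well-order $\le_\mcs$, and proves by induction on that well-order the relativized statement in which $\ma_u$ is drawn from $\ba_{\beta_u}\cap N_u$ --- with possibly different $\beta_u$ for different $u$. The limit-case reduction then lowers a single coordinate $\beta_{u_*}$ to a smaller ordinal of $N_{u_*}$ (obtained from a maximal antichain contained in $N_{u_*}$), and the successor case lowers all coordinates currently equal to the maximal value $\beta_* = \gamma_*+1$ down to $\gamma_*$, which \emph{is} definable in each relevant $N_u$ from $\beta_*$. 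To repair your argument you would need to replace the induction on $\alpha$ by an induction on such tuples (or something equivalent); as written, the limit case does not go through.
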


\begin{proof} 
Fix $\bn, \bk$ for the proof.  
Since $\alpha_*$, the length of the sequence, was arbitrary and initial segments of construction sequences are construction sequences, 
it will suffice to prove this for $\ba_{\alpha_*}$, and note it holds for $\alpha_* = 0$ by \ref{g20}. 
 
Normally we might hope to work up to $\alpha_*$ by  
induction on $\alpha$, but note here a delicate point. 
In the present c.c. we make use of various elementary submodels where all of the prior ordinals may not appear, 
say, $\beta < \alpha$ need not imply $\beta \in \bigcup_u N_u$. So instead, at stage $\alpha_*$, 
we work by induction on a related well-order which will do the job.  (Essentially, we re-run an entire proof by induction using only ordinals 
which appear, and \emph{then} conclude the property for $\alpha_*$.)

Suppose then that $\chi$ is sufficiently large and $\bM = (\mch(\chi), \in)$ contains $\ba_{\alpha_*}$ and suppose that 
 $\bar{N} = \langle N_u : u \in [\bn]^{\leq \bk} \rangle$, $x$ are given as in \ref{g2a}(A).

First we define the well-ordering for the induction. 
Fix in advance some background linear ordering on $\{ u : u \in [\bn]^{\leq \bk} \}$, which will be used in the third bullet point below 
so that the lexicographic comparison is well defined. 
Let: 
\[ \mcs = \{ \bar{\beta} : \bar{\beta} = \langle \beta_u : u \in  [\bn]^{\leq \bk} \rangle \mbox{ and } \beta_u \in (\alpha_* + 1) \cap N_{u} \}. \]
Any such $\bar{\beta}$ represents the choice of a relevant ordinal from each $N_u$; 
notice that the ordinals in any such $\bar{\beta}$ need not be in any order of size 
and may repeat.  
 
Define $\leq_\mcs$ to be the following order on $\mcs$:

\begin{quotation}
\noindent $\bar{\beta} \leq_\mcs \bar{\gamma}$ iff the following all hold.\footnote{For example, 
suppose $\bar{\beta} = (0,0,5,\omega+ \omega, 2, 4)$ and $\bar{\gamma} = (\omega + 5, 3, 0, \omega + \omega, 15, 5)$. 
Then $\rn(\bar{\beta})$ is the set $\{ 0, 2, 4, 5, \omega + \omega \}$ and $\rn(\bar{\gamma})$ is the set $\{ 0, 3, 5, 15, \omega+5, \omega + \omega \}$. 
In this case $v = \{ \omega + \omega \}$. So we are left comparing $5$ and $\omega + 5$ and we conclude $\bar{\beta} \leq_\mcs \bar{\gamma}$.  
If $\rn(\bar{\beta}) \subsetneq \rn(\bar{\gamma})$, it could happen that every ordinal which appears only in $\bar{\gamma}$ is less than all of the 
ordinals appearing in $\bar{\beta}$, thus $v = \rn(\bar{\beta}) \subsetneq \rn(\bar{\gamma})$ and we are under the jurisdiction of the second case. 
(If one of the ordinals appearing only in $\bar{\gamma}$ is \emph{not} below all of the ordinals of $\bar{\beta}$, then $v \subsetneq \rn(\bar{\beta})$ so we are safely in the first case.)  
If we compare two sequences in which exactly the same sets of ordinals occur, just possibly in a different order or with different multiplicities, then $u$ is everything, so for definiteness use the lexicographic order.}

\begin{itemize}
\item $\bar{\beta}, \bar{\gamma} \in \mcs$.
\item let $v(\bar{\beta}, \bar{\gamma})$ be the maximal $v$ such that: $v \subseteq \rn(\bar{\beta})$ is an end segment and 
$v \subseteq \rn(\bar{\gamma})$ is an end segment in the sense of the usual ordering on ordinals. 
\item $\max (\rn(\bar{\beta}) \setminus v ) < \max (\rn(\bar{\gamma}) \setminus v)$ \underline{or} 
\\ $\rn(\bar{\beta}) \setminus v = \emptyset$ and $\rn(\bar{\gamma}) \setminus v \neq \emptyset$ \underline{or} 
\\ $\rn(\bar{\beta}) \setminus v = \rn(\bar{\gamma}) \setminus v = \emptyset$ and 
$\bar{\beta} \leq_{\operatorname{lex}} \bar{\gamma}$. 
\end{itemize} 
\end{quotation}
(A brief explanation in English is in the footnote.) This is a linear order on $\mcs_*$, and in fact a well-order.  

\br
For $\bar{\beta} \in \mcs$ let $\operatorname{Statement}({\bar{\beta}})$ be the following ``relativized'' statement of our chain condition for our  
system $\bar{N}$ of models. Note that the relativization arranges that in model $N_u$, we choose the element $\ma_u$ from $\ba_{\beta_u} \cap N_u$.  
\begin{quotation}
\noindent ${\operatorname{Statement}({\bar{\beta}}})$:  
\\ \noindent if $\ma_u \in N_{u} \cap \ba_{\beta_u}$ for $u \in  [\bn]^{\leq \bk}$ and 
$0 < \bigcap_u \ma_u$ 
then for every $n<\omega$, $\trv: n \rightarrow \{ 0, 1\}$ and distinct 
$\epsilon_0, \dots, \epsilon_{n-1} \in \del \setminus \bigcup \{ N_{ u } : u \in  [n]^{\leq k} \}$  
we have $\ma \cap \bigcap_{i<n} (\mx_{\epsilon_i})^{\trv(i)} > 0$.
\end{quotation}
This statement is intended to be evaluated in $\bM$.  Note that our desired conclusion of the proof is equal to 
$\operatorname{Statement}({\bar{\beta}})$ in the case where $\bar{\beta}$ is the sequence constantly equal to $\alpha_*$, 
i.e. $\bigwedge_u \beta_u = \alpha_*$. 
So it is enough to prove by induction on 
$\bar{\beta} \in \mcs$, in the order given by $\leq_\mcs$, that $\operatorname{Statement}({\bar{\beta}})$ holds.

The induction splits to cases. In each case, we will asssume the choice of $\ma_u$'s satisfy 
$\ma = \bigcap_u \ma_u > 0$, as otherwise there is nothing to prove.  We will use $\ba$ to denote $\ba_{\alpha_*}$, i.e. in $\bM$.  
We will often also write $\bigcap_u$ or $\bigwedge_u$ without writing the quantification $u \in [\bn]^{\leq \bk}$.

\bbr
\noindent \underline{Case 1}.  $\bigwedge_u \beta_u = 0$.  

This case is immediate from \ref{g20} and the choice of $\ba_0$.  

\bbr
\noindent \underline{Case 2}. For at least one $u$, call it $u_*$, $\beta_u$ is a limit ordinal.  (If there is more than one, fix the rightmost 
one in the sequence.)

Working in $\bM$, the set $\{ \mc \in \ba^+_{\beta_{u_*}} : \mc \in \bigcup \{ \ba_\alpha : \alpha < \beta_{u_*} \}$ 
and $\mc \leq \ma_{u_*}$ or $\mc \cap \ma_{u_*} = 0_\ba \}$ is dense in $\ba_{\beta_{u_*}}$. Hence there is a maximal antichain of 
$\ba_{\beta_{u_*}}$ comprised of such elements, and necessarily of size no more than $\mu$, by \ref{c:42}(1). Call this antichain $\mci$. Just 
as in the proof of \ref{g20}, since $\ma_{u_*} \in N_{u_*}$, without loss of generality $\mci \subseteq N_{u_*}$. 

By definition of construction sequence, $\ba_{\beta_{u_*}} \lessdot \ba$ so it is a regular subalgebra and its antichains remain antichains of $\ba$. 
Now in $\ba$, we have assumed that $\ma = \bigcap_u \ma_u > 0$, hence for some $\mc \in \mci$ we have that $\ma \cap \mc > 0$. Note that 
$\mc \cap \ma_{u_*} = 0$ is impossible, since $u_*$ is one of our $u$'s, so $\ma_{u_*} \geq \ma$. So as we chose $\mc$ from $\mci$, necessarily 
$\mc \leq \ma_{u_*}$. We made this choice in the big model, but because $\mci \subseteq N_{u_*}$, necessarily $\mc \in N_{u_*}$ and so 
$\mc \in N_{u_*} \cap \ba_{\beta_{u_*}}$. 

Define $\ma^\prime_u$ to be $\mc$ if $u = u_*$ and $\ma_u$ if $u \neq u_*$ (we change just one).  Correspondingly define 
$\beta^\prime_{u} = \min \{ \beta : \mb^\prime_u \in \ba_{\beta} \cap N_{u} \}$. Clearly this is well defined, and $\beta^\prime_u \leq \beta_u$ 
for every $u \neq u_*$, while $\beta^\prime_u < \beta_u$ for $u = u_*$ by our choice of $\mci$.  We have reduced to a strictly smaller tuple 
in the sense of $\leq_\mcs$, so by inductive hypothesis, we finish this case. 

\bbr
\noindent \underline{Case 3}.  Suppose that at least one of the $\beta_u$'s is not zero, and none are limit ordinals. 
Let $\beta_* = \max \{ \beta_u : u \in [\bn]^{\leq \bk} \}$, so $\beta_*$ is necessarily a successor, say $\beta_* = \gamma_* + 1$. 
Notice that there may be more than one $u$ for which $\beta_u  = \beta_*$.  
In due course, we will split into cases 3A and 3B according to whether the problem handled at stage $\gamma_*$ was from a $T_\xm$ or $\trg$ 
(as we shall explain). 

Working in $\bM$, by the definition of statement $(\bar{\beta})$ each $\ma_u$ also belongs to $\ba_{\beta_*}$, since the chain of Boolean algebras is increasing. 
So in $\bM$, $\ba_{\beta_*} \models \ma = \bigcap_u \ma_u > 0$. Moreover, if $\beta_u \neq \beta_*$ then in $\bM$, $\ma_u \in \ba_{\gamma_*}$. 

Let $\bar{\mb}$ be the possibility pattern that was solved at stage $\gamma_*$. 

Let $\langle u_\ell : \ell < \ell_* = \bn^\bk \rangle$ be a fixed enumeration of $\{ u :  u \in [\bn]^{\bk} \}$ (it could be the one from the beginning of the proof, but 
this isn't important). 

First we shall set $\ma_{-1} = \ma$. Then, by induction on $\ell < \ell_*$, we shall choose $\ma^\prime_{u_\ell}$ and $\ma_\ell$ to satisfy:
(i) $\ma^\prime_{u_\ell} \in N_{u_\ell} \cap \ba_{\beta_{u_\ell}}$, (ii) $0 < \ma^\prime_{u_\ell} \leq \ma_{u_\ell}$, and (iii) 
$\ma_\ell := \bigcap_{k \leq \ell} \ma^\prime_{u_k} \cap \bigcap_{k > \ell} \ma_{u_k} > 0$. 
We do so  
as follows:\footnote{As usual, 
we would like to replace $\ma_{u_\ell}$ with (some smaller nonzero combination of elements from an earlier Boolean algebra and 
generators of the formal solution) as in \ref{smooth}(1); but here we want to make sure, in addition, that the replacement is intelligible to $N_{u_\ell}$ and 
also that the common intersection $\ma$ remains nonzero in $\ba$. The simple acrobatics described carry this out.}

\begin{itemize}
\item initially, as said, $\ma_{-1} = \ma$. 
\item if $\beta_{u_\ell} \neq \beta_*$, then $\ma^\prime_{u_\ell} = \ma_{u_\ell}$ and $\ma_\ell = \ma_{\ell-1}$. 
\item if $\beta_{u_\ell} = \beta_*$, proceed as follows. Working in $\bM$, call an element of $\ba^+_{\beta_*}$  \emph{special} if it can be written as 
$\mx \cap \mb^1_w \cap \bigcap_{n < t} (-\mb^1_{v_n})$, where $\mx \in \ba^+_{\gamma_*}$, $t \in \mathbb{N}$ 
and $w$ and each $v_n$ $(n<t)$ is a finite subset of 
$\lambda$, with each $v_n \not \subseteq w$. The special elements are dense in the completion, so dense in $\ba_{\beta_*}$. So we may choose 
$\mci$ a maximal antichain of $\ba_{\beta_*}$ consisting of special elements $\mz$ such that either $\mz \leq \ma_{u_\ell}$ or $\mz \cap \ma_{u_\ell} = 0$.  
Since $\ma_{u_\ell} \in N_{u_\ell}$, $\ba_{\beta_*} \in N_{u_\ell}$, without loss of generality $\mci \in N_{u_\ell}$ hence 
 $\mci \subseteq N_{u_\ell}$. Since $\ba_{\beta_*}$ is a regular subalgebra of 
$\ba$ and its maximal antichains remain maximal antichains, there is some $\mz \in \mci$ such that $\ba \models \ma_{\ell-1} \cap \mz > 0$. 
By our choice of $\mci$ (and recalling that $\ma \leq \ma_{u_\ell}$), 
necessarily $\ma_{\ell-1} \cap \mz \leq \ma_{u_\ell}$.  To complete this stage, define $\ma^\prime_{u_\ell} = \mz$ and 
define $\ma_\ell = \ma_{\ell-1} \cap \mz$.  Notice it is still the case that $\ma^\prime_{u_\ell} \in N_{u_\ell} \cap \ba_{\beta_{u_\ell}}$ and that 
the common intersection is nonzero.  
\end{itemize}

So we have reduced to the following case: if $\beta_{u_\ell} = \beta_*$, we may assume $\ma_{u_\ell} = \mx_\ell \cap \mb^1_{w_\ell} \cap \bigcap_{n<t_\ell} (-\mb^1_{v^\ell_n})$. 
If $\beta_{u_\ell} \neq \beta_*$, then $\ma_{u_\ell} = \mx_\ell$. Now as $\bigcap_\ell \ma_{u_\ell} > 0$, necessarily $w_\ell \not\subseteq v^i_n$ for $n < t_\ell$, 
$\ell < \ell_*$. Also the ``potential'' $\epsilon$'s can be from $\bigcup \{ v^\ell_n : n < t_\ell, \ell < \ell_* \}$. So we can ignore the ``$-\mb^1_{v^\ell_n}$'' terms. 

That is:  if $\beta_{u_\ell} = \beta_*$, assume $\ma_{u_\ell} = \mx_\ell \cap \mb^1_{w_\ell}$, and  
if $\beta_{u_\ell} \neq \beta_*$, then $\ma_{u_\ell} = \mx_\ell$.

We'll need one final reduction: Suppose at this stage we're given a fixed finite set of quantifier-free formulas 
$\Sigma$ of $T$, the theory associated to the possibility pattern $\langle \mb_u : u \subseteq \bigcup_{\ell < \ell_*} w_\ell \rangle$.  ($\Sigma$ must be fixed at this point, but 
may depend on information obtained after the initial application of \ref{smooth}, such as the size of the $w$'s.) 
We may be tempted to assume that each $\mx_\ell$ decides all formulas from $\Sigma$, but actually it can decide only $\Sigma_\ell$, a finite subset of the possibility 
pattern for $\langle \mb_w : w \subseteq N_{u_\ell} \cap \gamma_* $ finite $\rangle$ which necessarily belongs to $N_{u_\ell}$, and will be chosen later. 
Define $W$ as the finite set of relevant $\ma$'s (=formulas, in the notation of \ref{ntn:a}). 
By induction on $\ell < \ell_*$, define $\mx^\prime_{\ell}$ and $\ma_\ell$ 
so that (i) $\mx^\prime_\ell \in \ba_\beta \cap N_{u_\ell}$, 
(ii) $0 < \mx^\prime_\ell \leq \mx_\ell$, (iii) $\mx^\prime_\ell$ is decisive in the sense defined below,  
(iv) $\ma_\ell = \ma_{\ell-1} \cap \mx^\prime_\ell \cap \mb^1_{w_\ell} > 0$. 
We proceed as follows:
\begin{itemize}
\item initially, $\ma_{-1} = \ma$ [recall this was the intersection $\bigcap_u \ma_u > 0$].
\item for each $\ell < \ell_*$, proceed as follows. Let $\beta = \beta_{u_\ell}$ if $\beta_{u_\ell} \neq \beta_*$ and let $\beta = \gamma_*$ otherwise. 
(So either way, $\beta \leq \gamma_*$ and $\mx_\ell \in \ba_\beta \cap N_{u_\ell}$.)
Working in $\bM$, call an element of $\ba^+_{\beta}$  \emph{decisive} if it decides all formulas in $\Sigma_\ell$. Choose 
$\mci$ a maximal antichain consisting of decisive elements $\mz$ such that either $\mz \leq \mx_{\ell}$ or $\mz \cap \mx_{\ell} = 0$.  
Since $\mx_{\ell} \in \ba_{\beta} \cap N_{u_{\ell}}$, without loss of generality $\mci \subseteq N_{u_\ell}$. 
Since $\ba_{\beta}$ is a regular subalgebra of 
$\ba$, there is some $\mz \in \mci$ such that $\ba \models \ma_{\ell-1} \cap \ma_{u_\ell} \cap \mz > 0$, so 
necessarily $\ma_{\ell-1} \cap \mz \cap \ma_{u_\ell} \leq \mx_{\ell}$.  To complete this stage, define $\mx^\prime_{\ell} = \mz \cap \mx$ and 
define $\ma_\ell = \ma_{\ell-1} \cap \mz \cap \mb^1_{w_\ell}$. 
\end{itemize}
To summarize, we have reduced to the case where each $\ma_{u_\ell}$ is of the form $\mx_\ell \cap \mb^1_{w_\ell}$ where 
each $\mx_\ell \in N_{u_\ell} \cap \ba_{\gamma_*}$, each $w_\ell$ is finite, each $\mx_\ell$ decides all formulas from $\Sigma_\ell$, and 
finally, $\ba \models \ma = \bigcap_u \ma_u > 0$. 

To finish proving $\operatorname{Statement}(\bar{\beta})$, it would suffice to show that 
\begin{equation} \label{almost-the-end}
\bigcap \{ \mx_\ell : \beta_{u_\ell} \neq \beta_* \} \cap \bigcap \{ \mx_\ell : \beta_{u_\ell} = \beta_* \} \cap \mb_W > 0 
\end{equation} 
where $W = \bigcup \{ w_\ell : \beta_{u_\ell} = \beta_* \}$. Why? Then, working in $\bM$, there is a homomorphism of 
$\ba_{\beta_*}$ onto $\ba_{\gamma_*}$ which is the identity on $\ba_{\gamma_*}$ and takes each $\mb^1_\gamma$ (for $\gamma \in W$) to $\mb_W$, and 
$\mb^1_\gamma$ (for $\gamma \in \lambda \setminus W$) to $0$.  
This allows us to replace each $\ma_{u_\ell}$ by $\mx_\ell \cap \mb_W$, which is an element of the earlier Boolean algebra $\ba_{\gamma_*}$, 
and thus to apply the inductive hypothesis to a smaller $\bar{\beta}$. (More plainly: because of the homorphism, nonzero intersections with the 
distinguished sequence in the earlier algebras must reflect nonzero intersections in the present, so we would finish the proof.) 

For (\ref{almost-the-end}), there are two cases depending on the identity of the theory.  Remember from \S \ref{s:locality} that dealing with the formulas mentioned 
in each case will suffice.

\bbr
\noindent \underline{Case 3A: $T = \trg$}.  In this case, we remember that our possibility pattern $\bar{\mb}$ 
reflected a type $\{ R(x,a_{(\gamma, 1)}) \land \neg R(x, a_{(\gamma, 0)}) : \gamma < \lambda \}$, since dealing with this formula suffices by \S \ref{s:locality}. 
We take $\Sigma_\ell$ corresponding to deciding $\ma[a_{(\zeta, \ii)} = a_{(\delta, \ii)}]$ for 
$\zeta, \delta \in W$, $\ii \in \{ 0, 1 \}$. 
Because the common intersection $\ma > 0$, it is necessarily the case that if $\zeta \in w_\ell$ and $\delta \in w_{\ell^\prime}$ then 
$\ma \cap \ma[a_{(\zeta, \ii)} = a_{(\delta, 1-\ii)}] = 0$. So $\ma \leq \mb_W$ as desired.\footnote{In the earlier proof, 
we did not assume a priori that the $\ma$'s had a common intersection, so we had to smooth things out using the $\Delta$-system lemma 
and the like to obtain one, below $\mb_W$.  In the present proof, we don't have the luxury of throwing away some of the $\ma$'s, but conversely we know in advance 
that their intersection is nonzero, which compensates.}

\bbr
\noindent \underline{Case 3B: $T = T_\xm$}.
In this case, we remember that our possibility pattern $\bar{\mb}$ 
reflected a type of the form $\{Q_\nu(x) \land R(x,\bar{a}_{v_\gamma}) : \gamma < \lambda \}$, again by \S \ref{s:locality}. 

By quantifier elimination, there is some finite $M$ depending on $|W|$ so that whether $\bigwedge \{ R(x,\bar{y}_{v_\gamma}) : \gamma \in W \}$ 
is consistent depends only on the types of $\{ y_\zeta : \zeta \in v_\gamma, \gamma \in W \}$ up to level $M$ in $\mct_2$. 
We take $\Sigma_\ell$ corresponding to deciding $\ma[P_\eta(a_\zeta)]$ for  $\eta \in \mct_{2}, \ell(\eta) \leq M, \zeta \in v_\gamma, \gamma \in W$, 
and for good measure, also to deciding $\ma[a_\zeta = a_\delta]$ for $\zeta, \delta \in v_\gamma$ and $\gamma \in W$. 
Here too, since the common intersection $\ma > 0$, the answer to consistency (which by choice of $\Sigma$ is either uniformly yes or uniformly no) 
is uniformly yes: that is, we have that $\ma \leq \mb_W$ which finishes the proof. 
\end{proof}

\br
\begin{concl}  \label{c:constr} 
Suppose $\lambda = \del \geq \mu^+ > \aleph_0$. Then:
\begin{enumerate}
\item[(a)]   Any construction sequence produces a Boolean algebra $\ba_* = \ba_{\alpha_*}$ of size $\leq (|\ba_0| + \lambda)^\lambda$, along with an 
ultrafilter $\de_*$ on $\ba_*$.  
\item[(b)] $(\ba_*, \bar{\mx})$ satisfies the 
$(\bn, \bk, \mu)$-c.c.  for every finite $\bn > \bk \geq 2$ for $\bar{\mx} = \langle \mx_{(\epsilon,0)} : \epsilon < \del \rangle$, 
and also satisfies the $(\mu^+, m)$-Knaster condition for any finite $m$, thus a fortiori the $\mu^+$-c.c.   
\item[(c)] If in addition $\alpha_* = 2^\lambda$, then for some construction sequence, 
$\de_*$ will be moral for the theory of the random graph and for every $T_\xm$, in the sense of being able to handle any possibility pattern 
$\langle \mb_u : u \in [\lambda]^{<\aleph_0} \rangle$.   
\end{enumerate} 
\end{concl}

\begin{proof}
Item (a) on size, follows by induction as each $\ba_\alpha$ has the $\mu^+$-c.c.  Item (b), on chain conditions and 
the Knaster property, follow from \ref{c:42} and \ref{cc-lemma}.

Regarding item (c): if $\alpha_* = 2^\lambda$, then we have enough room to enumerate and handle all the relevant possibility patterns. 
We don't know the final Boolean algebra in advance, of course, so we can't enumerate in advance ``possible possibility patterns'' involving its subsets. 
We can handle this either by interpolating into our enumeration at each stage everything that becomes a possibility pattern by then,  
or alternately as follows. 

Recalling separation of variables, say that $\langle B_u : u \in [\lambda]^{<\aleph_0} \rangle$ is a \emph{pre-possibility pattern} if it is a set of nonempty subsets of $\lambda$ which is monotonic in the sense that $u \subseteq v$ implies $B_u \supseteq B_v$.  Say that $(q, p)$
is a \emph{pre-type} for $(T, \vp(x,\bar{y}))$ if $q$ is a partial type in the infinitely many variables $x, \langle y_\gamma : \gamma < \lambda \rangle$ 
which implies the complete $T$-type of each parameter variable $y_\gamma$, and asserts that $p = \{ \vp(x,\bar{y}_{v_\alpha}) : \alpha < \lambda \}$
is a partial type for $\langle v_\alpha : \alpha < \lambda \rangle$ some enumeration of some set of subsets of $\lambda$ with each $v_\alpha$ of size $\ell(\bar{y})$. 
 
Say that \emph{$\bar{B}$ represents $(q, p)$} when the following holds. 
There exist a regular ultrafilter $\de$ on $\lambda$ and 
 $\jj: \mcp(\lambda) \rightarrow \ba_\alpha$ satisfying the hypotheses of separation of variables for $(\ba_\alpha, \de_\alpha)$, and 
 an enveloping ultrapower $N = M^\lambda/\de$, and $\langle a_\gamma : \gamma < \lambda \rangle$ a sequence of elements of $M^I$, 
 such that $q_\beta(x,\langle a_\gamma/\de : \gamma < \lambda \rangle)$ is a partial type, thus  $p_\beta(x) = \{ \vp(x,\bar{a}_{v_\alpha}/\de) : \alpha < \lambda \}$ is  a partial type, and finally, for each $u \in [\lambda]^{<\aleph_0}$, we have that $\{ t \in \lambda : M \models \exists x \bigwedge \{ \vp(x,\bar{a}_{v_\alpha}(t)) : \alpha \in u \} = B_u$.
 
 Let $\langle (\bar{B}_\beta, (q_\beta, p_\beta)) : \beta < 2^\lambda \rangle$ 
 be an enumeration of all pairs of pre-possibility patterns and associated pre-types for each of our $(T, \vp)$'s of interest, 
 each occurring cofinally often. 
 At stage $\alpha = \beta+1$, look at $(\ba_\alpha, \de_\alpha)$ and ask the following. 
 Does $\bar{B}_\beta$ represent $(q_\beta, p_\beta)$? If so, is $\jj(\bar{B}_\beta) \subseteq \de_\alpha$? 
 If the answer to both is yes, then at this stage we solve the possibility pattern $\jj(\bar{B}_\beta)$. If not, do nothing (or 
 if some action is desired, re-solve a previously solved possibility pattern).   In this way any relevant problem that may show up in the 
 final $(\ba_*, \de_*)$ is eventually handled, recalling that the 
 cofinality of the construction is strictly greater than $\lambda$. 
\end{proof}

\newpage

\section{First direction: non-saturation}  \label{s:n-s}

In this section we show that preserving the chain condition from \ref{g2a} will block saturation of $T_{n,k}$. 
We 
know from Conclusion \ref{c:constr} in the  
previous section that we can preserve this chain condition while adding formal solutions to theories $T_\xm$. Together these give a proof of Theorem 
\ref{t:first} below, explaining how to build regular ultrafilters which are good for the $T_\xm$'s and not good for the $T_{n,k}$'s. 

Recall that for an integer $k$ and infinite cardinals $\lambda \geq \mu$, 
we call $F: [\lambda]^k \longrightarrow [\lambda]^{<\mu}$ a \emph{set mapping} if $F(\sigma) \cap \sigma  = \emptyset$ for all $\sigma \in [\lambda]^k$. 
Write $(\lambda, k, \mu) \longrightarrow n$ to mean that for every set mapping $F: [\lambda]^k \longrightarrow [\lambda]^{<\mu}$ there 
is a free set of size $n$, i.e. there is $w \in [\lambda]^n$ such that $F(\sigma) \cap w = \emptyset$ for all $\sigma \in [w]^k$. 
Half of a well known characterization of Kuratowski-Sierpinski is the theorem that 
\[ (\aleph_{\alpha + k}, k, \aleph_\alpha) \longrightarrow k+1 \]
for any ordinal $\alpha$, see \cite[\S 46]{ehmr}. 
It follows by monotonicty that for any integers $n > k$, 
\[ (\aleph_{\alpha + n}, k, \aleph_\alpha) \longrightarrow n+1 \]
see for instance \cite{MiSh:1050} Corollary 1.3. 
For our purposes here and in \cite{MiSh:1050}, it was convenient to replace set mappings with \emph{strong} set mappings 
(which replace the requirement that $F(\sigma) \cap \sigma = \emptyset$ with the requirement that $\sigma \subseteq F(\sigma)$, for all 
$\sigma \in [\lambda]^k$). This allows us to think of $F$ as setting some kind of closure for $\sigma$, and does not bother the 
freeness result, as the next observation shows. 

\begin{defn} \label{d56} 
Write $(\lambda, k, \mu) \xrightarrow{strong} n$ to mean that for every \emph{strong} set mapping $G: [\lambda]^k \longrightarrow [\lambda]^{<\mu}$ there is $w \in [\lambda]^n$ such that $w \not\subseteq G(u)$ for all $u \in [w]^k$.   
\end{defn}

\begin{obs} \label{o:ssm}
\begin{equation}
\label{e0}
(\lambda, k, \mu) \longrightarrow n \mbox{ \hspace{3mm} iff \hspace{3mm} } (\lambda, k, \mu) \xrightarrow{strong} n
\end{equation}
in particular
\begin{equation}
\label{e1} (\aleph_{\alpha + k}, k, \aleph_\alpha) \xrightarrow{strong} k+1 
\end{equation}
and by monotonicity, if $k< n$, 
\begin{equation} (\aleph_{\alpha + n}, k, \aleph_\alpha) \xrightarrow{strong} n+1. 
\end{equation}
\end{obs}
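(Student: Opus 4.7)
My plan is to split Observation \ref{o:ssm} into the equivalence (\ref{e0}) and the two Kuratowski--Sierpi\'{n}ski consequences (\ref{e1}), (\ref{e2}); the consequences will follow routinely from (\ref{e0}) by invoking classical Kuratowski--Sierpi\'{n}ski statements. For (\ref{e0}), I would translate between classical and strong set mappings via the two complementary operations $F \mapsto F \cup \mathrm{id}$ (making a classical mapping strong) and $G \mapsto G \setminus \mathrm{id}$ (making a strong mapping classical), both of which preserve the size bound $|\cdot| < \mu$ on the image sets.

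For the direction ``classical implies strong,'' given a strong set mapping $G: [\lambda]^k \to [\lambda]^{<\mu}$ I would set $F(\sigma) := G(\sigma) \setminus \sigma$; this is a classical set mapping, and the assumption $(\lambda, k, \mu) \longrightarrow n$ yields some $w \in [\lambda]^n$ with $F(u) \cap w = \emptyset$ for every $u \in [w]^k$. For such $u$,
\[ G(u) \cap w = (F(u) \cup u) \cap w = u, \]
and since $|u| = k < n = |w|$ we have $w \not\subseteq G(u)$, so $w$ is strong-free for $G$ as required.

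For the reverse direction, given a classical set mapping $F$ I would set $G(\sigma) := F(\sigma) \cup \sigma$ and apply the hypothesized strong arrow: the resulting $w$ has, for each $u \in [w]^k$, some element of $w \setminus u$ lying outside $F(u)$. When $n = k+1$ this singleton condition is exactly classical freeness; for $n > k+1$ an iteration of the strong arrow, or a more elaborate coding of $F$ into a strong mapping, is needed, and this is the main technical step I anticipate. Once (\ref{e0}) is in hand, (\ref{e1}) follows by applying the forward direction to the Kuratowski--Sierpi\'{n}ski theorem $(\aleph_{\alpha + k}, k, \aleph_\alpha) \longrightarrow k+1$ from \cite[\S 46]{ehmr}, and (\ref{e2}) by applying it to the strengthened form $(\aleph_{\alpha + n}, k, \aleph_\alpha) \longrightarrow n+1$ for $n > k$ quoted in \cite{MiSh:1050} Corollary~1.3.
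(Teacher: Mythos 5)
Your forward direction (classical arrow implies strong arrow) is exactly the paper's argument: given a strong $G$, pass to $F(\sigma)=G(\sigma)\setminus\sigma$, take a classically free $w$, and observe $G(\sigma)\cap w=\sigma\subsetneq w$; and your derivation of (\ref{e1}) and of the third display from the Kuratowski--Sierpi\'{n}ski results of \cite{ehmr} and \cite{MiSh:1050} is exactly how the paper obtains them.

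For the converse the paper's entire proof is the single word ``Similarly,'' and your analysis shows why that is too quick: dualizing via $G(\sigma)=F(\sigma)\cup\sigma$ and applying the strong arrow yields, for each $u\in[w]^k$, only that \emph{some} element of $w\setminus u$ avoids $F(u)$; this coincides with classical freeness precisely when $n=k+1$ (since then $w\setminus u$ is a singleton and $F(u)\cap u=\emptyset$ already) and is strictly weaker otherwise. So the ``main technical step'' you anticipate for $n>k+1$ is a genuine one, which the paper's one-word proof does not supply either. As written, your proposal establishes the implication the paper actually uses downstream --- only the direction from the classical arrow to the strong (and, in Observation \ref{o:ssm-a}, the stronger) arrow feeds into Lemma \ref{f2c} via Remark \ref{rmk42} --- but leaves the stated ``iff'' of (\ref{e0}) incomplete in the same place the paper does. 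If you want a clean statement without the extra work, it would suffice to weaken (\ref{e0}) to that one implication, since the converse is never invoked.
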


\begin{proof}
For (a), (b), (c) it suffices to prove (a).  $(\leftarrow)$ Let $G: [\lambda]^k \rightarrow [\lambda]^{<\mu}$ be a strong set mapping. 
Define $F: [\lambda]^k \rightarrow [\lambda]^{<\mu}$ by $F(\sigma) = G(\sigma) \setminus \sigma$ for $\sigma \in [\lambda]^k$. 
Let $w$ be such that for any $\sigma \in [w]^k$, $F(\sigma) \cap w = \emptyset$, so recalling 
$G(\sigma) = F(\sigma) \cup \sigma$, we have $G(\sigma) \cap w \subseteq \sigma \subsetneq w$. ($\rightarrow$) Similarly. 
\end{proof}

For our present purposes, to deal with $n$ possibly much larger than $k$, the following is what we need.  Note that 
in \ref{d56a}, each $\tau$ escapes `capture' not only by its own subsets of size $k$, but indeed by any subset of $w$ of size $k$. 

\begin{defn} \label{d56a} 
Write $(\lambda, k, \mu) \xrightarrow{stronger} n$ to mean that for every {strong} set mapping $G: [\lambda]^k \longrightarrow [\lambda]^{<\mu}$ there is $w \in [\lambda]^n$ such that 
$\forall \sigma \in [w]^k$  $\forall \tau \in [w]^{k+1}$ we have $(\tau \not\subseteq F(\sigma))$. 
\end{defn}

\begin{rmk}
It is a theorem of ZFC that $(\mu^{+\omega}, k, \mu) \xrightarrow{stronger} n$, and even \\ $(\mu^{+n}, k, \mu) \xrightarrow{stronger} n$. 
However, if we fix $n,k$ and we would like to get $(\mu^{+k+7}, T_{n,k})$-non-morality or so, we run into 
consistency problems $($see \cite{KoSh:645} and \cite{MoSh:1072}$)$. 
\end{rmk}

The main work of this section is in proving Lemma \ref{f2c}, which can be seen as a strong replacement for:

\begin{fact}[\cite{MiSh:1050} Claim 5.1]
Suppose $\ba = \ba^1_{2^\lambda, \mu}$ and $(\lambda, k, \mu^+) \xrightarrow{strong} k+1$.  Then no ultrafilter 
on $\ba$ can be $\lambda^+$-moral for $T_{k+1,k}$.
\end{fact}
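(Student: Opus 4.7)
The plan is to derive a contradiction from the assumption that some ultrafilter $\de_*$ on $\ba = \ba^1_{2^\lambda, \mu}$ is $\lambda^+$-moral for $T_{k+1,k}$, by converting a hypothetical multiplicative refinement into a strong set mapping $F \colon [\lambda]^k \to [\lambda]^{<\mu^+}$ whose existence of a free $(k+1)$-set leads back to a contradiction.

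First I would fix an enveloping ultrapower for $(\ba, \de_*)$ and choose a sequence $\langle a_\gamma : \gamma < \lambda \rangle$ of pairwise distinct parameters in a model of $T_{k+1,k}$, in sufficiently generic position that no $(k+1)$-subset is pre-forbidden. Consider the partial type $p(x) = \{ R(x, \bar{a}_\sigma) : \sigma \in [\lambda]^k \}$, whose consistency is controlled precisely by the absence of pre-forbidden $(k+1)$-subsets among the parameters. Via the \Los map and separation of variables, this projects to a possibility pattern $\bar{\mb} = \langle \mb_u : u \in [[\lambda]^k]^{<\aleph_0} \rangle$ in $(\ba, \de_*)$, where $\mb_u$ tracks, as the index varies, whether $\bigcup u$ picks out only non-pre-forbidden parameter configurations. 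Morality for $T_{k+1,k}$ would produce a multiplicative refinement $\langle \mb^1_\sigma : \sigma \in [\lambda]^k\rangle$ in $\de_*$ with $\bigcap_{\sigma \in u} \mb^1_\sigma \leq \mb_u$ for every finite $u$.

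Next I would exploit that $\ba = \ba^1_{2^\lambda, \mu}$ is the completion of a free Boolean algebra on $2^\lambda$ independent partitions of size $\mu$, with the $\mu^+$-c.c.\ forcing each element to be supported on only $\leq \mu$ many coordinates in this $2^\lambda$-indexed system. By suitably embedding $\lambda$ into the index set of partitions so that the parameters $a_\gamma$ are ``coded'' by coordinate $\gamma$, I would set $F(\sigma) = \sigma \cup (\supp(\mb^1_\sigma) \cap \lambda)$. Then $F \colon [\lambda]^k \to [\lambda]^{<\mu^+}$ is a strong set mapping. By the hypothesis $(\lambda, k, \mu^+) \xrightarrow{strong} k+1$, there is a free $w = \{\gamma_0, \ldots, \gamma_k\} \in [\lambda]^{k+1}$, meaning that for each $\sigma \in [w]^k$ and each $\gamma_i \in w \setminus \sigma$, the coordinate $\gamma_i$ lies outside $\supp(\mb^1_\sigma)$.

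The main step, and the main obstacle, is to parlay this freeness into a Boolean-algebraic contradiction. Since each $\mb^1_\sigma$ with $\sigma \in [w]^k$ is supported on coordinates disjoint from the ``parameter coordinates'' of the $a_{\gamma_i}$ for $\gamma_i \in w \setminus \sigma$, the $\aleph_0$-independence of generators from distinct antichains allows one to find a positive element $\mc \leq \bigcap_{\sigma \in [w]^k} \mb^1_\sigma$ on which the $R$-pattern among the $a_{\gamma_i}$ can be prescribed freely. In particular $\mc$ can be chosen inside the element of $\ba$ encoding the event that every $k$-subset of $\{a_{\gamma_0}, \ldots, a_{\gamma_k}\}$ is an $R$-edge, i.e.\ on which $w$ is pre-forbidden. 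But this contradicts $\mc \leq \mb_{[w]^k}$, since on a pre-forbidden $(k+1)$-set the type $\{R(x,\bar{a}_\sigma) : \sigma \in [w]^k\}$ together with an $x$ would produce the forbidden $(k+2)$-configuration. Hence no multiplicative refinement can exist, so $\de_*$ is not $\lambda^+$-moral for $T_{k+1,k}$, completing the proof.
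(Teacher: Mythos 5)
Your overall skeleton is the right one, and it matches the strategy the paper uses for the stronger Lemma \ref{f2c} (of which this Fact is essentially the free-Boolean-algebra, $n=k+1$ special case, via Claim \ref{g20}): an edge-free family of parameters, a possibility pattern in which edges ``briefly and occasionally'' appear, supports converted into a strong set mapping, a free set, and then independence to force the forbidden configuration below the refinement. But there is a genuine gap at the main step, and it comes from coding the \emph{vertices} rather than the \emph{edges}. If the parameter $a_\gamma$ is tied to coordinate $\gamma$, then the edge event $e_w = \jj(\{t : M \models R(\bar{a}_w[t])\})$ has support contained in $w = \sigma \cup \{\gamma_i\}$, which meets $\supp(\mb^1_\sigma)$ in $\sigma$. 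Nothing stops the refinement from satisfying $\mb^1_\sigma \leq -e_w$ using only $\sigma$-coordinates (e.g.\ if $e_w = \mx' \cap \my$ with $\mx'$ supported on $\sigma$ and $\my$ on $\{\gamma_i\}$, take $\mb^1_\sigma \leq -\mx'$), so your claim that the $R$-pattern on $w$ ``can be prescribed freely'' below $\bigcap_{\sigma} \mb^1_\sigma$ fails: freeness of the single missing vertex does not make $e_w$ independent of the $\mb^1_\sigma$'s. The fix, which is exactly what the paper's proof of \ref{f2c} does, is to fix a bijection $f$ from potential $(k+1)$-edges to designated generators $\mx_\epsilon$ and \emph{build} the pattern so that the edge event \emph{is} $\mx_\epsilon$, with $\mb_u = 1 - \bigcup\{\mx_\epsilon : \dots\}$; and correspondingly the set mapping must be closed under decoding (the paper takes $F(\sigma)$ to be a Skolem hull containing $f$ and $\cl(\sigma)$), since otherwise freeness of $w$ --- a statement about vertices --- says nothing about the single edge coordinate $\epsilon(w)$. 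Your $F(\sigma) = \sigma \cup (\supp(\mb^1_\sigma) \cap \lambda)$ has no such closure, so even after switching to edge coding the final step would not go through as written. (Your two correct technical points --- that the $\mu^+$-c.c.\ bounds supports by $\mu$, and that at $n=k+1$ the strong arrow pins down the unique missing vertex --- are fine.)

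There is a second, subtler gap: realizability of the pattern. Once the edge events are wired to independent generators, with parameters indexed plainly by $\lambda$ and every $(k+1)$-subset eligible for a shadow edge, independence produces a positive element below $\bigcap \{\mx_{\epsilon(\tau)} : \tau \in [v]^{k+1}\}$ for a $(k+2)$-set $v$, i.e.\ a positive set of indices at which the parameters themselves form the forbidden configuration --- impossible in a model of $T_{k+1,k}$, so no choice of parameter functions realizes your $\bar{\mb}$ and it is not a possibility pattern at all. Your phrase ``sufficiently generic position'' only addresses consistency of the type in the ultrapower, not consistency of the shadow-edge assignment at the index level. The paper's device for this is the rectangle $B = \{b_{\alpha, i} : \alpha < \lambda, i < n\}$ with edges permitted only on doubly strictly increasing tuples: since the second coordinates take only $n = k+1$ values, no $(k+2)$-set can have all of its $(k+1)$-subsets doubly strictly increasing, so even adding all shadow edges never creates the forbidden configuration. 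Both repairs are necessary, and with them your argument becomes the paper's.
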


\noindent not only in the sense of allowing $n$ to have large finite distance from $k$, but also in the sense 
that instead of using the completion of a free Boolean algebra $\ba^1_{2^\lambda, \mu, \aleph_0}$, 
we may use any Boolean algebra with a little freeness, as described by the c.c. 

\br 

\begin{lemma} \label{f2c}
Suppose that:  
\begin{enumerate}
\item 
$(\lambda, k, \mu^+) \xrightarrow{stronger} n+1$
\item $\ba$ is a complete Boolean algebra such that: 
\begin{enumerate} 
\item $\ba^1_{\lambda, \mu, \aleph_0} \lessdot \ba$ and 
\item $(\ba, \bar{\mx})$ satisfies the $(n, k, \mu)$-c.c.  
\\ where $\bar{\mx} = \langle \mx_{\epsilon} : \epsilon < \lambda \rangle$ and\footnote{There is 
such a free set of size $\lambda$ none of whose elements belong to $\de$.} 
each 
$\mx_\epsilon \in \{ \mx_{(\epsilon, 0)} \cap - \mx_{(\epsilon, 1)}, \mx_{(\epsilon, 1)} \} \setminus \de$. 
\end{enumerate}
\item $\de$ is an ultrafilter on $\ba$ 
\item $T = T_{n,k}$  
\end{enumerate}
Then $\de$ is not $(\lambda, T)$-moral,\footnote{In \cite{MiSh:999} we defined moral without the plus: 
$\lambda$-moral means over sets of size $\lambda$.}
 i.e. there is a possibility pattern $\langle \mb_u : u \in [\lambda]^{<\aleph_0} \rangle$ 
with no multiplicative refinement. 
\end{lemma}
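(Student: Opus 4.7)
The plan is to construct an explicit possibility pattern $\bar{\mb}$ for $T_{n,k}$ in $(\ba, \de)$, and then use hypothesis (1) together with the chain condition (2)(b) to block every multiplicative refinement.

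For the pattern, since $|[\lambda]^n| = \lambda$, fix an injection $\phi : [\lambda]^n \to \lambda$ and for $A \in [\lambda]^n$ set $\mz_A := \mx_{\{(\phi(A),0)\}} \in \ba^1_{\lambda,\mu,\aleph_0}$, so $-\mz_A \in \de$ by hypothesis (3). Taking the possibility-pattern index set to be $[\lambda]^k$, for finite $V \subseteq [\lambda]^k$ define
\[ \mb_V := \bigcap\{-\mz_A : A \in [\lambda]^n,\ [A]^k \subseteq V\}, \]
a finite intersection of elements of $\de$, hence in $\de$. This $\bar{\mb}$ arises as the possibility pattern for the partial type $\{R(x,\bar{c}_\sigma) : \sigma \in [\lambda]^k\}$ in a standard enveloping ultrapower where ``$A$ is pre-forbidden at index $t$'' is coded by $t \in \mz_A$, with parameters $\bar{c}_\sigma[t]$ enumerating $\sigma$.

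Now suppose for contradiction $\langle \mb^1_\sigma : \sigma \in [\lambda]^k\rangle$ is a multiplicative refinement. Using $\ba^1_{\lambda,\mu,\aleph_0} \lessdot \ba$ and the $\mu^+$-c.c., each $\mb^1_\sigma$ has an associated support $H(\sigma) \in [\lambda]^{\leq \mu}$: the union of generator indices appearing in a maximal antichain of refinements of $\mb^1_\sigma$ inside $\ba^1_{\lambda,\mu,\aleph_0}$, with the key property that any elementary submodel of $\bM := (\mch(\chi); \in)$ containing $\mb^1_\sigma$, $\ba$, and $\bar{\mx}$ also contains every ordinal in $H(\sigma)$. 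Define the strong set mapping
\[ G(\sigma) := \sigma \cup H(\sigma) \cup \bigcup\{\phi^{-1}(\epsilon) : \epsilon \in H(\sigma) \cap \rn(\phi)\} \in [\lambda]^{<\mu^+}. \]
By hypothesis (1), pick $W \in [\lambda]^{n+1}$ with $\tau \not\subseteq G(\sigma)$ for all $\sigma \in [W]^k$, $\tau \in [W]^{k+1}$; equivalently $G(\sigma) \cap W = \sigma$. Fix $A_0 \in [W]^n$ and set $\epsilon_* := \phi(A_0)$. Then $\epsilon_* \notin H(\sigma)$ for every $\sigma \in [A_0]^k$: otherwise $A_0 = \phi^{-1}(\epsilon_*) \subseteq G(\sigma) \cap W = \sigma$, contradicting $|A_0| > |\sigma|$.

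For the $(n,k)$-position family, let $\tilde\sigma \in [n]^k$ enumerate the positions of $\sigma \in [A_0]^k$ in $A_0$, put $i(\sigma) := \min([n] \setminus \tilde\sigma)$, and $X_i := \{\mb^1_\sigma : i(\sigma) = i\}$. Choose $M_0 \prec \bM$ of size $\mu$ containing the tuple $x$ coding $(\ba, \bar{\mx}, \mu, \theta, \del)$ and $\mu + 1$, but not the ordinal $\epsilon_*$, the function $\phi$, the sets $A_0$ or $W$, nor the map $\sigma \mapsto \mb^1_\sigma$; define $N_u$ as the Skolem hull in $\bM$ of $M_0 \cup \bigcup_{i \in [n] \setminus u} X_i$. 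Then $\langle N_u : u \in [n]^{\leq k}\rangle$ is in $(n,k)$-position, and $\epsilon_* \notin \bigcup_u N_u$ (since $M_0$ does not contain $\epsilon_*$ or $\phi$, and every $\mb^1_\sigma$ entering some $X_i$ has $H(\sigma)$ avoiding $\epsilon_*$). Set $\ma_u := \bigcap\{\mb^1_\sigma : \sigma \in [A_0]^k,\, u \subseteq \tilde\sigma\}$; each such $\mb^1_\sigma$ has $i(\sigma) \in [n] \setminus \tilde\sigma \subseteq [n] \setminus u$, so $\mb^1_\sigma \in X_{i(\sigma)} \subseteq N_u$, giving $\ma_u \in \ba \cap N_u$ by finite closure. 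Then $\ma := \bigcap_u \ma_u = \bigcap_{\sigma \in [A_0]^k} \mb^1_\sigma > 0$ as a finite intersection of ultrafilter elements. The chain condition applied with the single outside ordinal $\epsilon_*$ gives $\ma \cap \mx_{\epsilon_*} > 0$. But $A_0$ is the unique $n$-set with $[A_0]^k \subseteq [A_0]^k$, so $\mb_{[A_0]^k} = -\mx_{\epsilon_*}$, and the multiplicative refinement forces $\ma \leq -\mx_{\epsilon_*}$, a contradiction.

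The main obstacle is the simultaneous verification that $\langle N_u\rangle$ is genuinely in $(n,k)$-position and that $\epsilon_* \notin \bigcup_u N_u$; the trick of enlarging $G$ by $\phi$-preimages is what guarantees $\phi(A_0)$ is absent from each relevant support, letting the chain condition apply with $\epsilon_*$ genuinely outside all submodels.
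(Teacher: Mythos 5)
Your overall strategy (build a pattern whose negative part is carried by the independent generators, then use the \emph{stronger} partition relation plus the $(\lambda,\mu,\aleph_0,k,n)$-c.c.\ applied to submodels generated by the solution elements) is the same as the paper's, but your possibility pattern is not actually a possibility pattern, and this is precisely the point the paper flags as crucial. You code ``$A$ is pre-forbidden'' by the independent generator $\mz_A = \mx_{\{(\phi(A),0)\}}$ for \emph{every} $A \in [\lambda]^n$, over a single set of $\lambda$ distinct parameters. Take any $W' \in [\lambda]^{n+1}$. Since the generators are independent, $\bigcap\{\mz_A : A \in [W']^n\} > 0_\ba$. But if $\bar{\mb}$ arose from an enveloping ultrapower with distinct parameters $\langle c_\gamma \rangle$, then $-\mb_{[A]^k}$ would be the $\jj$-image of the set of indices $t$ at which $\{c_\gamma[t] : \gamma \in A\}$ is pre-forbidden, so $\bigcap_{A \in [W']^n}(-\mb_{[A]^k})$ would be the $\jj$-image of the set of indices at which all $n$-subsets of $W'$ are simultaneously pre-forbidden --- i.e.\ at which the $(n+1)$ elements indexed by $W'$ form the forbidden configuration (every $(k+1)$-subset of $W'$ lies in some $n$-subset). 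That set is empty because each index model satisfies $T_{n,k}$, so its image is $0_\ba$, contradicting $\bigcap_A \mz_A > 0$. Hence no enveloping ultrapower realizes your $\bar{\mb}$, and showing that a non-pattern has no multiplicative refinement proves nothing about morality. The paper avoids exactly this by using a \emph{rectangle} $B = \{b_{\alpha,i} : \alpha < \lambda,\ i < n\}$ with only $n$ columns and attaching generators only to doubly strictly increasing $(k+1)$-tuples: a clique can use at most one vertex per column, so no forbidden $(n+1)$-configuration can ever appear no matter which generators fire. The paper states explicitly that the boundedness of the rectangle in one direction ``was crucial for this to be a possibility pattern.''

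A secondary gap: to apply the chain condition you need $\epsilon_* \notin \bigcup_u N_u$, but your argument only shows $\epsilon_* \notin H(\sigma)$, where $H(\sigma)$ is a \emph{lower} bound on which ordinals a submodel containing $\mb^1_\sigma$ must contain. The models $N_u$ are Skolem hulls of $M_0$ together with several of the $\mb^1_\sigma$'s, and $N_u \cap \lambda$ may be much larger than $\bigcup_\sigma H(\sigma)$; the freeness of $W$ for your set mapping $G$ does not control it. The paper resolves this by defining the set mapping to be $F(\sigma) = \lambda \cap \operatorname{Sk}(\sigma \cup \cl(\sigma) \cup \{f,x\} \cup \{\mb^1_\alpha : \alpha \in \cl(\sigma)\} \cup \mu \cup \cdots)$, i.e.\ the trace of the Skolem hull itself, so that freeness of $w$ directly yields that the excluded indices lie outside every $N_\sigma$. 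Your two-layer construction (supports $H(\sigma)$ feeding a set mapping $G$, then separately built hulls $N_u$) would need an additional argument, or should be collapsed into the paper's one-step definition.
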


\begin{rmk} \label{rmk42}
The cardinal conditions of $\ref{f2c}(1)$ and $\ref{conv-a}$ are satisfied in ZFC when $\del = \lambda = \aleph_\omega$, $\mu = \theta = \aleph_0$.
\end{rmk}

\begin{proof}[Proof of Lemma \ref{f2c}]  Recall that in $T = T_{n,k}$, $R$ is a $(k+1)$-place symmetric irreflexive relation and the forbidden configuration has size $(n+1)$. 
Fix $M \models T$ and $N = M^I/\ee$ an enveloping ultrapower (\ref{d:enveloping} above). In $N$, or in the monster model for $T$, let 
\[ B = \{ b_{\alpha, i} : \alpha < \lambda, i < n \} \]
be a rectangle of distinct parameters [i.e. $b_{\alpha,i} = b_{\alpha^\prime, i^\prime}$ if and only if $(\alpha, i) = (\alpha^\prime, i^\prime)$] on which there are 
no instances of $R$.  
The intent will be that our type $p(x)$ asserts that $x$ connects to every $k$-tuple of $b$'s which is doubly strictly increasing, i.e. 
strictly increasing in each coordinate. Since there are no background edges on $B$ (at least modulo the ultrafilter) this is a type.  However, we will choose 
our possibility pattern so that some edges ``briefly and occasionally'' appear. 

More precisely, define $\mathbb{P}$ (the set of indices for parameters) by 
\begin{align*} 
\mathbb{P} = \{ (\bar{\beta}, \bar{i}) ~:~ &  (\bar{\beta}, \bar{i}) \mbox{ is doubly strictly increasing, i.e.  } \\
& \bar{\beta} \mbox{ is a strictly increasing sequence of elements of $\lambda$ of length $k$, } \\
& \bar{i} \mbox{ is a strictly increasing sequence of elements of $n$ of length $k$.} \} \\
\end{align*}
[This notation just separates the first and second coordinates of doubly strictly increasing sequences of length $k$.] 
Fix also an enumeration of 
$\mathbb{P}$ as 
\[ \langle (\bar{\beta}, \bar{i})_\alpha : \alpha < \lambda \rangle. \] 
We can recover the corresponding set of elements of $B$ by writing 
\[ \bar{b}_{(\bar{\beta}, \bar{i})} \mbox{ as shorthand for }\langle b_{(\beta_0, i_0)}, \dots, b_{(\beta_{k-1}, i_{k-1})} \rangle \] 
where $\bar{\beta} = \langle \beta_0, \dots, \beta_{k-1} \rangle$ 
and $\bar{i} = \langle i_0, \dots, i_{k-1} \rangle$. 
Our type will be 
\[ p(x) = \{ \vp_\alpha := R(x,\bar{b}_{ (\bar{\beta}, \bar{i})_\alpha }): \alpha < \lambda \} \]
again, asserting $x$ connects to every doubly strictly increasing sequence of elements in our rectangle $B$. 
Next we define $\mathbb{E}$ (the set of indices for shadow edges) by 
\begin{align*} 
 \mathbb{E} = \{ (\bar{\gamma}, \bar{j}) ~:&~  \bar{\gamma} \mbox{  a strictly increasing sequence of elements of $\lambda$ of length $k + 1$, }  \\
& \bar{j} \mbox{ a strictly increasing sequence of elements of $n$ of length $k + 1$.} \} 
\end{align*} 
[Note $k+1$ instead of $k$ this time.]  
Fix a bijection $f: \mathbb{E} \rightarrow \{ \mx_{\epsilon} : \epsilon < \lambda \}$, which gives an enumeration 
\[ \langle  (\bar{\gamma}, \bar{j})_\epsilon : \epsilon < \lambda \rangle. \]
and a way of associating to each $(\bar{\gamma}, \bar{j})_\epsilon$ the element $\mx_{\epsilon}$ of our free sequence.  

Now we work towards a possibility pattern. First, for any 
$(\bar{\gamma}, \bar{j})_\epsilon \in \mathbb{E}$, 
define  
\[ \ma[R(\bar{b}_{(\bar{\gamma}, \bar{j})_\epsilon})] ~ = ~ \mx_{\epsilon}. \]
If $(\bar{\gamma}, \bar{j})$ has length $k+1$ but is not doubly strictly increasing, set $\ma[R(\bar{b}_{(\bar{\gamma}, \bar{j})})]  = 0$. 
Set $\ma[\bar{b}_{(\bar{\gamma}_1, \bar{j}_1)} = \bar{b}_{(\bar{\gamma}_2, \bar{j}_2)}]$ to be $0$ if ${(\bar{\gamma}_1, \bar{j}_1)} \neq 
{(\bar{\gamma}_2, \bar{j}_2)}$ and $1$ if ${(\bar{\gamma}_1, \bar{j}_1)} =  
{(\bar{\gamma}_2, \bar{j}_2)}$. (Since $E$ contains only doubly strictly increasing sequences, let us spell out fully the rest of the conditions 
given by the theory. 
The edge relation is irreflexive, meaning $R(x_0, \dots, x_{k}) \implies \bigwedge_{i<j\leq k} x_i \neq x_j$,  and symmetric, meaning 
$R(x_0, \dots, x_{k}) \implies R(x_{\pi(0)}, \dots, x_{\pi(k)})$ for any bijection $\pi: k+1 \rightarrow k+1$. Finally, for every 
$i_0 < \cdots < i_k < n+1$, we have that $R(x_{i_0}, \dots, x_{i_k})$ is forbidden.) 

All cases of $\ma[R(b_{(\epsilon_0, i_0)}, \dots, b_{(\epsilon_{i_{k}}, i_{k})}]$ not just stated or not implied by the cases just stated are set to be 0.
Finally,  for each $u \in [\lambda]^{<\aleph_0}$, define 
$\mb_u = 1 - \bigcup \{ \my_{\bar{\beta}} : \bar{\beta} $ an increasing sequence of length $n$ from $u$, where 
$\my_{\bar{\beta}} = \bigcap \{ \mx_\epsilon : $ for some $i_0 < \cdots < i_k < n$ we have $(\langle \beta_{i_\ell} : \ell \leq k \rangle, \langle i_\ell : \ell \leq k \rangle ) 
= (\bar{\beta}, \bar{i})_\epsilon \}~ \}$. 

\br
Let 
\[ \bar{\mb} = \langle \mb_u : u \in [\lambda]^{<\aleph_0} \rangle. \]
This completes the definition of our possibility pattern.  Note that the fact that our rectangle is bounded by $n$ in one direction was crucial for this to be a possibility pattern: 
even if we add edges on \emph{all} doubly strictly increasing $(k+1)$-element sequences of elements of $B$ we will never get a clique on $(n+1)$-vertices. 

Assume for a contradiction that $\langle \mb^1_\alpha : \alpha < \lambda \rangle$ is a multiplicative refinement for $\bar{\mb}$.   
It will be useful to have the following translation between finite subsets $u$ of $\lambda$ in the sense of formulas in the type, and finite subsets $\sigma$ of $\lambda$ 
in the sense of first coordinates in our array $B$, since one has to do with our multiplicative refinement and the other with the domain of our partition theorem. 
So: 
\begin{itemize}
\item for any $u \in [\lambda]^{<\aleph_0}$, define 
\[ \proj(u) = \{ \beta < \lambda : b_{\beta,i} \mbox{ occurs in $\bar{b}_{(\bar{\beta}, \bar{i})_\alpha}$ for some 
$\alpha \in u \}$ }. \]
So $\proj(u) \in [\lambda]^{<\aleph_0}$, but $\lambda$ is now the set of 
first coordinates of elements of $B$, no longer the index set for the type. Note: if $u \neq \emptyset$, 
$|\proj(u)| \geq k$. 
\item for any $\sigma \in [\lambda]^{<\aleph_0}$, define 
\[ \cl(\sigma) = \{ \alpha < \lambda : \proj( \{ \alpha \} ) \subseteq \sigma \}. \] 
Visually, if $u = \{ \alpha_0, \dots, \alpha_\ell \}$ then 
$\proj(u)$ is the set of first coordinates of (indices for) parameters appearing in $\vp_{\alpha_0}, \dots, \vp_{\alpha_\ell}$.  
Continuing, $\cl(\proj(u))$ is again a set of indices for formulas in the type, 
namely, all formulas with parameters coming from doubly strictly increasing sequences with first coordinates from $\proj(u)$. 
\end{itemize}
Let $\bM = (\mch(\chi); \in)$ 
and let $\bM^+$ be an expansion by Skolem functions. 
Let $x$ be an element coding $(\ba, \bar{\mx}, \mu, \aleph_0, \lambda)$. 
Define $F: [\lambda]^{\leq k} \rightarrow [\lambda]^{\leq \mu}$ by: 
\[ \sigma \mapsto \lambda \cap \operatorname{Sk}\left( \sigma \cup \cl(\sigma) \cup \{ f, x \} \cup \{ \mb^1_\alpha : \alpha \in \cl(\sigma) \} \cup \mu \cup \{ \lambda, \mu, \aleph_0 \},  \bM^+\right). \]
So $F$ is well defined and its range is a subset of $\lambda$ of size $\mu$ (because we are taking the Skolem hull in a countable language of a set of size $\mu$; note that 
the set whose Skolem hull we take includes $\mu$ as a set). For the partition theorem, we focus on $F \rstr [\lambda]^k$, but define it on $[\lambda]^{\leq k}$ 
to match Definition \ref{d:n-k-pos}. 

By assumption (1) of the Lemma, there is some $w \in [\lambda]^n$ such that for all $\sigma \in [w]^k$ and $\tau \in [w]^{k+1}$ we have $\tau \not\subseteq F(\sigma)$ (or just that $F(\sigma) \cap w) = \sigma$). 
Fix this $w$ for the rest of the proof.

Defining $N_\sigma = F(\sigma)$ for $\sigma \in [w]^{\leq k}$ gives a family of submodels of $\bM$ (taking, if desired, the reduct to the language without Skolem functions). 
These models are in $(n,k, \mu)$-general position, since we have the necessarily monotonicity from $F$. 
For each $\sigma \subseteq w$, choose $\ma_\sigma = \bigcap \{ \mb^1_\alpha : \alpha \in \cl(\sigma) \}$, which is possible. 
Also $\ma_\sigma \in N_\sigma$ by definition. We know that 
\[ \bigcap \{ \ma_\sigma : \sigma \in [w]^k \} > 0 \]
since $\bar{\mb}^1$ is a multiplicative refinement (every finite intersection is in $\de$, hence nonzero). Let $(\bar{\delta}, \bar{\ell})$ be the unique doubly strictly increasing sequence 
of length $n$ whose first coordinates are from $w$.   Suppose that $(\bar{\gamma}, \bar{j}) = (\bar{\gamma}, \bar{j})_\epsilon$ 
is a strictly
 increasing subsequence of $(\bar{\delta}, \bar{\ell})$ of length $k+1$.  Then $\mx_\epsilon \notin N_\sigma$ for any $\sigma \in [w]^k$. 
[Why? Assume for a contradiction that $\mx_\epsilon$ were in some $N_\sigma$. Recall that $N_\sigma$ contains the bijection $f$, so 
from $\mx_\epsilon$ we can define $(\bar{\gamma}, \bar{j})_\epsilon$ from which we can define the projection on to the first coordinates 
$\bar{\gamma}$, so the set $\tau$ corresponding to the sequence $\bar{\gamma}$ is contained in $N_\sigma = F(\sigma)$. But $\tau \in [w]^{k+1}$ 
so we have found $\tau \in [w]^{k+1}$ and $\sigma \in [w]^k$ such that $\tau \subseteq F(\sigma)$, contradicting our partition theorem.] This was for 
$\sigma \in [w]^k$, but
by monotonicity, it follows that $\mx_\epsilon \notin N_\sigma$ for any $\sigma \in [w]^{\leq k}$. 

Let $E = \{ \epsilon ~: ~(\bar{\gamma}, \bar{j})_\epsilon$ 
is a doubly strictly increasing subsequence of $(\bar{\delta}, \bar{\ell})$ of length $k+1 \}$. We have just seen that
\[ E \subseteq \lambda \setminus \bigcup \{ N_\sigma : \sigma \in [n]^{\leq k} \}. \]
(It does not matter 
whether indices $\epsilon$ or elements $\mx_\epsilon$ are excluded, as the one-to-one mapping 
$\epsilon \mapsto \mx_\epsilon$ belongs to $\bigcap \{ N_\sigma : \sigma \in [w]^k \}$.)
So by our chain condition,
\[ (\star) ~~ \bigcap \{ \ma_\sigma : \sigma \in [w]^k \} \cap \{ \mx_\epsilon : \epsilon \in E \} > 0. \]
Recall, however, that $\ma_\sigma = \bigcap \{ \mb^1_\alpha : \alpha \in \cl(\sigma) \}$, so the intersection $(\star)$ includes 
$\bigcap \{ \mb^1_\alpha : (\bar{\beta}, \bar{i})_\alpha$ is a $k$-element subsequence of $(\bar{\delta}, \bar{\ell}) \}$, which cannot have 
positive intersection with the set $\bigcap \{ \mx_\epsilon : \epsilon \in E \}$ if $\bar{\mb}^1$ is a multiplicative refinement of $\bar{\mb}$. 
\end{proof}

\begin{theorem} \label{t:first}  Let $\del = \lambda$, $\mu$, $\theta = \aleph_0$ satisfy $\ref{f2c}(1)$ and $\ref{conv-a}$, for example, 
$\del = \lambda = \aleph_\omega$, $\mu = \aleph_0$.  Fix $n > k \geq 2$. 
Then there is a regular ultrafilter $\de$ on $\lambda$ which is:
\begin{enumerate}
\item good for the theory of the random graph, 
\item good for every $T_\xm$, 
\item not good for $T_{n,k}$. 
\end{enumerate}
\end{theorem}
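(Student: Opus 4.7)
The plan is to assemble Conclusion \ref{c:constr} with Lemma \ref{f2c} via separation of variables. I would first invoke \ref{c:constr}(c) with $\alpha_* = 2^\lambda$ to obtain a construction sequence $\langle \mfa_\alpha : \alpha < 2^\lambda \rangle$ whose final pair $(\ba_*, \de_*)$ satisfies: $\de_*$ is moral for $\trg$ and for every $T_\xm$; and by (a)--(b), $\ba_*$ has size $\leq 2^\lambda$, the $(\mu^+, m)$-Knaster condition for every $m<\omega$ (hence the $\lambda^+$-c.c.), and the $(\lambda, \mu, \aleph_0, \bn, \bk)$-c.c.\ with respect to the distinguished sequence $\bar{\mx} = \langle \mx_{\{(\epsilon, 0)\}} : \epsilon < \lambda \rangle \subseteq \ba_0$, for every $\bn > \bk \geq 2$. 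In fixing the initial $\de_0$ on $\ba_0 = \ba^1_{\lambda, \aleph_0, \aleph_0}$ I would moreover require $\mx_{\{(\alpha, 0)\}} \notin \de_0$ for each $\alpha < \lambda$ (easy, since each such element lies in an infinite independent antichain of $\ba_0$); as every $\de_\beta$ agrees with $\de_0$ on $\ba_0$, this propagates to $\mx_{\{(\alpha, 0)\}} \notin \de_*$.

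Next, I would invoke separation of variables (Definition \ref{d:built}) --- available because $\ba_*$ is complete, of size $\leq 2^\lambda$, and satisfies the $\lambda^+$-c.c. --- to obtain a regular ultrafilter $\de$ on $\lambda$ built from data $(\de_0^\prime, \ba_*, \de_*, \jj)$. The translation principle underlying separation of variables converts morality of $\de_*$ on $\ba_*$ for a theory $T$ into $\lambda^+$-saturation of every $\vp$-type (for $\vp$ a formula of $T$) in an enveloping ultrapower via $\de$; combined with Malliaris's reduction of Keisler's order to $\vp$-types, this yields that $\de$ is good for $\trg$ and for every $T_\xm$, giving clauses (1) and (2).

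For clause (3), I would apply Lemma \ref{f2c} to $(\ba_*, \de_*)$ with the designated $n, k$. Each hypothesis is in place: (1) is exactly the partition relation built into the statement of the theorem; (2)(a) holds since $\ba_0 = \ba^1_{\lambda, \aleph_0, \aleph_0}$ is by construction a complete subalgebra of $\ba_*$, which matches $\ba^1_{\lambda, \mu, \aleph_0} \lessdot \ba$ in the displayed case $\mu = \aleph_0$; (2)(b) is the $(\lambda, \aleph_0, \aleph_0, k, n)$-c.c.\ granted by \ref{c:constr}(b); and (3) was arranged in the first paragraph. Thus \ref{f2c} produces a possibility pattern in $\ba_*$ for $T_{n,k}$ admitting no multiplicative refinement, and under the reverse direction of separation of variables this corresponds to a $\vp$-type in an enveloping ultrapower via $\de$ that is not realized. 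Hence $\de$ is not good for $T_{n,k}$.

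The deep content has already been paid for: the inductive preservation of the $(\del, \mu, \theta, \bn, \bk)$-c.c.\ in Lemma \ref{cc-lemma}, and the Kuratowski--Sierpi\'nski application in Lemma \ref{f2c}. The present proof is therefore essentially bookkeeping, and the only subtlety I would watch for is aligning the parameter $\mu$ in its two roles --- as the width parameter for the c.c.\ on the Boolean-algebra side, and as the parameter in the partition relation $(\lambda, k, \mu^+) \xrightarrow{stronger} n+1$ on the combinatorial side. In the example $\lambda = \aleph_\omega$, $\mu = \aleph_0$ these coincide cleanly via Observation \ref{o:ssm-a} and the subsequent Remark.
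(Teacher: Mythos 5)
Your proposal is correct and follows exactly the paper's route: the paper's proof of Theorem \ref{t:first} is literally the two-line citation ``(1), (2) are given by Conclusion \ref{c:constr} and (3) is Lemma \ref{f2c},'' and you have simply filled in the separation-of-variables bookkeeping and the hypothesis-checking (including the useful observation that one should fix $\de_0$ so that $\mx_{\{(\alpha,0)\}} \notin \de_0$, hence $\notin \de_*$, to meet hypothesis (3) of Lemma \ref{f2c}, a point the paper leaves implicit). No discrepancies.
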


\begin{proof}  
(1), (2) are given by Conclusion \ref{c:constr} and (3) is Lemma \ref{f2c}. 
\end{proof}

\newpage
\section{Second direction} \label{s:second}

In this section we show that it is possible to saturate $T_{n,k}$ 
while not saturating $T_\xm$. 
Our strategy is to use the key chain condition from the proof in \cite{MiSh:1167} that Keisler's order has the maximum number of classes, see \ref{d:1167-cc} below. 
The main work of the section is in Lemma \ref{l:add-tnk-solution} which shows that adding a formal solution to a positive 
problem from $T_{n,k}$ can easily be done while preserving this chain condition, with 
no special assumptions on the ideal $\mci$ mentioned in \ref{d:1167-cc}. Since the ultrafilters we consider will be good for the theory of the random graph, solving 
positive $T_{n,k}$-problems suffices.

\begin{defn}[The $(\kappa, \mci, \bar{m})$-c.c., \cite{MiSh:1167} Definition 8.2] \label{d:1167-cc}
Let $\kappa$ be an uncountable regular cardinal.  
Let $\mci$ be an ideal on $\omega$ extending $[\omega]^{<\aleph_0}$ and $\bar{m}$ a fast sequence. 
We say that the Boolean algebra $\ba$ has the $(\kappa, \mci, \bar{m})$-c.c.  when: 
given $\langle \ma_\alpha : \alpha \in \uu_2 \rangle$ with $\uu_2 \in [\kappa]^\kappa$ 
a sequence\footnote{or renaming, without loss of generality, $\uu_2 = \kappa$.}  
 of elements of $\ba^+$, we can find 
$j < \omega$, $\uu_1 \in [\kappa]^\kappa$ and $A \in \mci$ such that: 

\begin{quotation}
\noindent $\oplus$  for every $n \in \omega \setminus A$ and every finite $u \subseteq \uu_1$ and every $i < n - j$, if 
\[ \frac{m_n}{(m^\circ_n)^{n^i}} < |u| \leq m_n \]
then there is some $v \subseteq u$ such that 
\[ |v| \geq \frac{|u|}{(m^\circ_n)^{n^{i+j}}} ~~\mbox{ ~~and~~ }~~ \bigcap \{ \ma_\alpha : \alpha \in v \}  > 0_\ba. \]
\end{quotation}
\end{defn}
\noindent For more on this chain condition, see \cite{MiSh:1167} Discussion 8.4. Its importance for omitting types is explained by the following, which informally says 
that if we construct our Boolean algebra by induction beginning from the completion of a free Boolean algebra in such a way as to maintain the $\kim$-c.c., then 
any resulting ultrafilter will not be good for theories $T_\xn$ whose level function is $1$ on a set which is not zero modulo the ideal $\mci$. 

\begin{fact}[\cite{MiSh:1167} Lemma 9.4] 
Suppose $\mu < \kappa \leq \lambda$ are infinite cardinals, with $\kappa$ regular. 
Suppose $\ba$ is a complete Boolean algebra, $\ba_* = \ba^1_{\kappa, \mu, \aleph_0}$, $\ba_* \lessdot \ba$ and 
$\ba$ has the $\kim$-pattern transfer property, see $\ref{paircc}$ below. Let $\xi$ be any level function such that $\xi^{-1}\{1\} \neq \emptyset \mod \mci$. Let 
$T = T_\xn$ where $\xn = \xn[\bar{m}, \bar{E}, \xi]$. Let $\de_{\oplus}$ be any nonprincipal ultrafilter on $\ba$. Then there is a possibility pattern for $T_n$ which 
has no multiplicative refinement.  

Hence if $\de$ is any regular ultrafilter on $\lambda$ built by separation of variables from $(\ba, \de_\oplus)$, 
$\de$ will not be good for $T_\xn$. 
\end{fact}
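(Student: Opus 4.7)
The plan is to exhibit, using the free structure inherited from $\ba_* = \ba^1_{\kappa, \mu, \aleph_0}$, a specific bad possibility pattern $\bar{\mb} = \langle \mb_u : u \in [\kappa]^{<\aleph_0} \rangle$ arising from a type in $T_\xn$ which asks a single $x$ to $R$-connect to parameters $\{ a_\alpha : \alpha < \kappa \}$, all lying (in any enveloping ultrapower $N$) in a common type-definable leaf $P^N_{\nu^*}$ for a fixed $\nu^* \in \lim(\mct_2)$, with $x$ meant to lie in $Q^N_{\eta^*}$ for a fixed $\eta^* \in \lim(\mct_1)$ against which no $R$-edges to $\nu^*$ are forbidden. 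This type is consistent in $N$, so its $\jj$-projection into $\ba$ is a legitimate possibility pattern. The key point in the design is to use the free generators of $\ba_*$ to encode, for each level $n$ and each $\alpha$, an ``independent'' choice of which child predicate $P_{(\nu^* \rstr n)^\smallfrown \langle j \rangle}$ the projection $a_\alpha[t]$ lies in at index $t$; we arrange this so that, roughly speaking, for every reasonably large finite $u$ the level-$n$ labels $\{ j_{n,\alpha}(t) : \alpha \in u \}$ are well-scattered across the vertex set of $E_n$.

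The quantifier-free axioms of $T_\xn$ make $\exists x \bigwedge_{\alpha \in u} R(x,a_\alpha[t])$ consistent in the index model $M_t$ only if, for every $n$ with $\xi(n) = 1$, the set $\{ j_{n,\alpha}(t) : \alpha \in u \}$ is contained in the $E_n$-neighborhood of a single vertex. By the sparseness property of $E_n$ recalled in \cite[Lemma 6.7]{MiSh:1167}, no set of vertices of ``large'' size (quantified in terms of $m_n$ and $m^\circ_n$) enjoys this; hence, with $\bar{\mb}$ designed as above, $\mb_u$ becomes disjoint from the relevant positive sub-elements of $\ba$ once $|u|$ exceeds this threshold at some $n \in \xi^{-1}\{1\}$, while for smaller $u$ the $\mb_u$ remain nonzero for the obvious free reasons.

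Assume for contradiction that $\bar{\mb}$ admits a multiplicative refinement $\langle \mb^1_\alpha : \alpha < \kappa \rangle$. Apply the $\kim$-c.c.\ of $\ba$ (or the stronger pattern transfer refinement, \ref{paircc}) to this sequence, obtaining $j_* < \omega$, $\uu_1 \in [\kappa]^\kappa$, and $A \in \mci$ as in Definition \ref{d:1167-cc}. Since $\xi^{-1}\{1\} \neq \emptyset \mod \mci$, choose $n \in \xi^{-1}\{1\} \setminus A$. Choose $i$ and then a finite $u \subseteq \uu_1$ with $m_n/(m^\circ_n)^{n^i} < |u| \leq m_n$, calibrated so that $|u|/(m^\circ_n)^{n^{i+j_*}}$ still lies above the sparseness threshold of $E_n$; the c.c.\ then produces $v \subseteq u$ with $|v|$ exceeding that threshold and $\bigcap_{\alpha \in v} \mb^1_\alpha > 0_\ba$. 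But this intersection is bounded above by $\mb_v$, while our construction of $\bar{\mb}$, combined with the sparseness of $E_n$ at the chosen level, forces $\mb_v$ to be zero (or disjoint from the positive witness produced by the c.c.); this contradiction yields the first conclusion. The ``hence'' clause is then immediate from separation of variables: any regular $\de$ built from $(\ba, \de_\oplus)$ would be forced to realize the lift of $\bar{\mb}$, which requires a multiplicative refinement that we have just shown cannot exist.

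The principal obstacle is the quantitative calibration in the previous paragraph: we need the c.c.-guaranteed lower bound $|u|/(m^\circ_n)^{n^{i+j_*}}$ on $|v|$ to still exceed the size at which $E_n$ ceases to embed any set of vertices into a single neighborhood, while keeping $|u| \leq m_n$ so that the c.c.\ is applicable in the first place. This alignment is precisely what the fast-growing choice of $\bar{m}$ from \cite[Definition 6.1]{MiSh:1167} and the exponents $n^i, n^{i+j_*}$ appearing in \ref{d:1167-cc} were engineered to permit, with the slack parameter $j_*$ absorbing the passage through $\bar{m}$'s growth. A secondary bookkeeping task is to arrange the free-generator encoding of the $\jj$-projections uniformly, so that ``the $\{j_{n,\alpha}(t)\}$ are too large at level $n$'' is actually witnessed $\de_\oplus$-positively; this is what the $\ba_* \lessdot \ba$ hypothesis and the nonprincipality of $\de_\oplus$ are for.
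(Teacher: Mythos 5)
The paper does not actually prove this statement: it is imported verbatim as a Fact from \cite{MiSh:1167} (Lemma 9.4), so there is no in-paper proof to compare against. That said, your reconstruction matches the mechanism the paper itself describes informally in \S\ref{s:indep} (``the $a_i[t]$'s fall across predicates $P_{\nu^\smallfrown\langle j\rangle}$ for a set of $j$'s which is large in the sense of $n=\lgn(\nu)$''), and the architecture --- encode level-$n$ labels via the independent partitions of $\ba_*$, use sparseness of $E_n$ to kill consistency on scattered label configurations, then apply the chain condition to a putative multiplicative refinement at some $n\in\xi^{-1}\{1\}\setminus A$ with the quantitative calibration you describe --- is the intended one.

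One step is stated incorrectly and your parenthetical hedge conceals the actual crux. For a possibility pattern arising from a consistent type, every $\mb_v$ lies in $\de_\oplus$ and so is never $0_\ba$; hence ``$\mb_v$ is zero'' cannot be the source of the contradiction, and the bare $\kim$-c.c.\ conclusion $\bigcap_{\alpha\in v}\mb^1_\alpha>0$ merely re-derives $\mb_v>0$, which contradicts nothing. The contradiction has to come from the extra element $\ma_*\in\ba_*^+$ in clause (4)(f) of the pattern transfer property (or, equivalently, from feeding the c.c.\ the elements $\mb^1_\alpha\cap\mx_\alpha$ where $\mx_\alpha\in\ba_*$ decides $a_\alpha$'s level-$n$ label): one must arrange that the positive intersection produced by the transfer sits below an element of $\ba_*$ on which the labels of $\{a_\alpha:\alpha\in v\}$ are provably too scattered for a single $E_n$-neighborhood, and \emph{that} element is disjoint from $\mb_v$. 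This is exactly why the hypothesis is the pattern transfer property for the pair $(\ba_*,\ba)$ rather than a chain condition on $\ba$ alone, and it is the piece your write-up needs to make explicit for the argument to close.
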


The pattern transfer property is often a simpler way to check that the $\kim$-c.c. has transferred to a later Boolean algebra in an inductive construction. 
We will use it in Lemma \ref{l:add-tnk-solution} below. 

\begin{defn}[Pattern transfer property] \label{paircc}  Let $\kappa$ be an uncountable cardinal, $\mci$ an ideal on $\omega$ extending 
$[\omega]^{<\aleph_0}$, and $\bar{m}$ a fast sequence. 
 The pair $(\ba_1, \ba_2)$ has the $(\kappa, \mci, \bar{m})$-pattern transfer property when: 
 $\mathbf{(1)}$ $\ba_1$ and $\ba_2$ are both complete Boolean algebras, 
$\mathbf{(2)}$ $\ba_1$ satisfies the $\kappa$-c.c
\footnote{we don't ask that $\ba_1$ have the $\kim$-c.c., only the 
$\kappa$-c.c., though in every application in the paper, $\ba_1$ will have the $\kim$-c.c.}, 
$\mathbf{(3)}$ $\ba_1 \lessdot \ba_2$, 
and 
$\mathbf{(4)}$ whenever $\uu_2 \in [\kappa]^\kappa$ and $\bar{\ma}^2 = \langle \ma^2_\alpha : \alpha \in \uu_2 \rangle$ 
is a sequence of 
elements of $\ba^+_2$, we can find a quadruple 
$(j, \uu_1, A, \bar{\ma}^1 )$ such that: 

\begin{enumerate}
\item[(a)] $j<\omega$
\item[(b)] $\uu_1 \in [\uu_2]^\kappa$
\item[(c)] $A \in \mci$
\item[(d)] $\bar{\ma}^1 = \langle \ma^1_\alpha : \alpha \in \uu_1 \rangle$ is a sequence of distinct elements of $\ba^+_1$ 
\item[(e)] $\alpha \in \uu_1$ implies $\ma^1_\alpha \bp \ma^2_\alpha$
\item[(f)] (i) implies (ii) where:
\begin{enumerate}
\item[(i)] we are given $n \in \omega \setminus A$, $i + j < n$, $u \subseteq \uu_1$, and $\ma_* \in \ba^+_1$ such that 
$m_n / (m^\circ_n)^{n^i} < |u| < m_n$ and 
\[ \ba_1 \models \ma_* \leq \bigcap_{\alpha \in u} \ma^1_\alpha \]
\item[(ii)] there is $v$ such that $v\subseteq u$ and $|v| \geq |u| / (m^\circ_n)^{n^{i+j}}$ {and} 
\[ \ba_2 \models \bigcap_{\alpha \in v} \ma^2_\alpha ~ \cap ~ \ma_* > 0.  \]
\end{enumerate}
\end{enumerate}
\end{defn}

\noindent The main work of the section is in the following lemma. 

\begin{lemma} \label{l:add-tnk-solution}
Assume $\mfa \in \AP$ and $\mfb$ is a $(\theta, T, \bar{\mb})$-extension of $\mfa$ where $\theta \leq \lambda$, $T= T_{n,k}$ for $n > k \geq 2$, and $\bar{\mb}$
is a possibility pattern arising from a type of the form 
\[  \{ R(x,\bar{a}_{v_\beta}) : \beta < \theta \}        \]
where each $|v_\beta| = k$. Then $\ba_\mfb$ satisfies the chain condition $\ref{d:1167-cc}$. 
\end{lemma}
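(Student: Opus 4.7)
The plan is to verify the $\kim$-pattern transfer property of Definition \ref{paircc} for the pair $(\ba_\mfa, \ba_\mfb)$. Since $\mfa \in \AP$ carries the inductively maintained $\kim$-c.c.\ on $\ba_\mfa$, pattern transfer then delivers the $\kim$-c.c.\ for $\ba_\mfb$.

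Given $\langle \ma^2_\alpha : \alpha \in \uu_2 \rangle$ in $\ba_\mfb^+$, I would first invoke Fact \ref{smooth}(1)-(2) to pass to a subset $\uu \subseteq \uu_2$ of size $|\uu_2|$ supporting the normal form
\[ 0 < \mx_\alpha \cap \mb^1_{u_\alpha} \cap \bigcap_{\ell < n_\alpha}(-\mb^1_{u_{\alpha,\ell}}) \leq \ma^2_\alpha, \]
with $\mx_\alpha \in \ba_\mfa^+$, $\mx_\alpha \leq \mb_{u_\alpha}$, and the sets $w_\alpha := u_\alpha \cup \bigcup_\ell u_{\alpha,\ell}$ forming a $\Delta$-system with heart $w_*$. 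Apply the $\Delta$-system lemma a second time to the parameter-index sets $V_\alpha := \bigcup_{\beta \in u_\alpha} v_\beta \subseteq \theta$, obtaining a common heart $V^*$. Using the maximal-antichain refinement from Case 3B of Claim \ref{c:42}, replace each $\mx_\alpha$ by a smaller element of $\ba_\mfa^+$ deciding the finitely many atomic formulas recording equalities and $R$-relations among $\langle \bar{a}_\gamma : \gamma \in V_\alpha \rangle$; by further thinning, arrange that this template is uniform across $\uu$. Set $\ma^1_\alpha := \mx_\alpha$ and $\uu_1 := \uu$; by Fact \ref{smooth}(3), $\ma^1_\alpha \bp \ma^2_\alpha$. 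The integer $j$ and the set $A \in \mci$ will be chosen at the end.

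For clause (f), suppose we are given $\ma_*, n, i, u$ with $\ma_* \leq \bigcap_{\alpha \in u} \mx_\alpha$ in $\ba_\mfa^+$ and $|u|$ in the required range; we must find $v \subseteq u$ of size at least $|u|/(m^\circ_n)^{n^{i+j}}$ with $\ma_* \cap \bigcap_v \ma^2_\alpha > 0$ in $\ba_\mfb$. By Fact \ref{smooth}(4) we discard the negations, reducing the goal to $\ma_* \cap \bigcap_v \mx_\alpha \cap \mb^1_W > 0$ in $\ba_\mfb$ with $W := \bigcup_{\alpha \in v} u_\alpha$. Since $\ma_* \leq \bigcap_v \mx_\alpha$, and since the only constraint on $\mb^1_W$ in the free extension is $\mb^1_W \leq \mb_W$, this is in turn equivalent to $\ma_* \cap \mb_W > 0$ in $\ba_\mfa$; model-theoretically, we need a positive part of $\ma_*$ on which the combined formulas $\{R(x,\bar{a}_{v_\beta}[t]) : \beta \in W\}$ are consistent in $M$.

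The hard part is this final combinatorial step. On $\ma_*$, the uniform template has fixed the q.f.\ $R$- and equality-structure within each $V_\alpha$, and single-piece consistency $\mx_\alpha \leq \mb_{u_\alpha}$ excludes any forbidden $T_{n,k}$-configuration lying wholly in one $V_\alpha$ (in particular, any such configuration confined to $V^*$, by monotonicity of $\mb$). An obstruction to joint consistency must therefore be a cross-$\alpha$ forbidden $(n+1)$-clique, namely $n$ parameters whose every $k$-subset equals some $v_\beta[t]$ with $\beta \in W$ and whose every $(k+1)$-subset is already an $R$-hyperedge in $M$. By the $\Delta$-system the fresh parts $V_\alpha \setminus V^*$ are pairwise disjoint, and by the uniform template the relative configuration of each fresh piece with $V^*$ is identical; consequently any such bad clique is specified by an ``abstract shape'' (how the $n$ vertices split between $V^*$ and at most $\binom{n_T}{k}$ fresh slots, with their covering data) and involves at most $\binom{n_T}{k}$ many distinct $\alpha$'s from $v$. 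The number of such shapes is bounded by a constant $C = C(n_T, k, |V^*|)$ independent of $|u|$. A greedy pruning that deletes the at most $\binom{n_T}{k}$ witnessing indices for each bad shape yields $v \subseteq u$ with $|v| \geq |u|/C$; passing to a positive sub-element $\ma'_* \leq \ma_*$ that decides (and negates) the finitely many residual cross-$\alpha$ parameter equalities outside $V^*$ arising on this $v$ (such an $\ma'_*$ exists since generically those equalities fail), we obtain $\ma'_* \leq \mb_W$, so $\ma_* \cap \mb_W > 0$. Finally, put a finite initial segment of $\omega$ into $A \in \mci$ and choose $j$ so that $(m^\circ_n)^{n^j} \geq C$ for every $n \in \omega \setminus A$; this absorbs the factor $C$ and completes the verification of pattern transfer.
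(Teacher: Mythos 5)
Your overall strategy (verify the pattern transfer property \ref{paircc} for the pair $(\ba_\mfa,\ba_\mfb)$, via the normal form of Fact \ref{smooth} and $\Delta$-systems) matches the paper's, but there are two genuine problems in the execution, one of which is the crux of the whole lemma.

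First, the treatment of parameter equalities is insufficient. You decide equalities and $R$-relations only \emph{within} each $V_\alpha$ and apply the $\Delta$-system lemma to the index sets $V_\alpha$; cross-$\alpha$ equalities are deferred to the very end, where you pass to $\ma'_*\leq\ma_*$ negating them ``since generically those equalities fail.'' There is no such genericity: $\ma_*$ is an arbitrary positive element of $\ba_\mfa$ below $\bigcap_{\alpha\in u}\mx_\alpha$, and it may well force $a_\gamma=a_{\gamma'}$ for $\gamma\in V_\alpha\setminus V^*$, $\gamma'\in V_{\alpha'}\setminus V^*$; then no positive $\ma'_*\leq\ma_*$ negates that equality, and the identified parameters can assemble a forbidden configuration even though the index sets are disjoint off the heart. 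This is exactly the danger the lemma must defuse: disjointness of the \emph{indices} $v_\beta$ says nothing about the parameters in an index model. The paper's proof handles this by making each $\mx_\alpha$ ``collapsed'' for its parameters (deciding, for each $\gamma$, the least $\beta$ with $a_\gamma=a_\beta$) \emph{before} thinning, and then running the $\Delta$-system on the sets $C_\alpha$ of collapse targets together with a uniformization of the covering functions $g_\alpha$ on $[C_*]^k$. That ordering is essential and cannot be recovered by a last-minute shrink of $\ma_*$.

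Second, the pruning step is both unnecessary and not correctly quantified. With the collapse and the $\Delta$-system on the $C_\alpha$ in place, one shows that \emph{no} forbidden configuration can arise at all: a putative bad $n$-set $A$ with covering function $G$ cannot lie in the heart $C_*$ (uniformity of the $g_\alpha$ plus $\mx_\alpha\leq\mb_{u_\alpha}$), cannot meet the fresh parts of two distinct $\alpha$'s (a $k$-subset containing fresh elements of both cannot lie in any single $C_{\alpha''}$, so $G$ is undefined on it), and hence lies in a single $C_\alpha$, contradicting $\mx_\alpha\leq\mb_{u_\alpha}$. So $\bigcap_{\alpha\in v}\mx_\alpha\leq\mb_W$ for \emph{every} finite $v$, and pattern transfer holds with $v=u$, $j=0$, $A=\emptyset$ --- no loss factor $C$ is needed. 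By contrast, your greedy deletion of ``the witnessing indices for each bad shape'' does not control the count: a single shape can have many instances on different tuples of $\alpha$'s, and indeed the uniformity of your template means that if one instance of a shape is realizable then so is essentially every instance on $\uu_1$, so deleting finitely many indices per shape cannot eliminate them and the bound $|v|\geq|u|/C$ does not follow.
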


\begin{proof}
The proof strategy will build on various ideas from \cite{MiSh:1167} \S 10.
However, to make this self-contained, we give all the details. 

Suppose that we are given $\bar{\mb} = \langle \mb_u : u \in [\theta]^{<\aleph_0} \rangle$, a sequence of elements, representing the problem from 
$\ba_\mfa$ just solved, and $\langle \mb^1_\alpha : \alpha < \theta \rangle$ the elements of $\de_\mfb$ which are its formal solution.  As written, consider this possibility 
pattern as coming from a positive $R$-type of the form $p(x) = \{ R(x,\bar{a}_{v_\alpha}) : \alpha < \theta \}$ in some enveloping ultrapower, 
where the notation means that in the enveloping ultrapower there is a sequence 
$\langle a_\gamma : \gamma < \theta \rangle$ of parameters, each $v_\alpha \in [\theta]^{k}$, and $\bar{a}_{v_\alpha} = \{ a_\gamma : \gamma \in v_\alpha \}$. 
[In slight abuse of notation, we will think of $v_\alpha$ as a set rather than a sequence, and so also consider $\bar{a}_{v_\alpha}$ as a set, 
since $R$ is symmetric and irreflexive.] 
We can think of $\mb_u$ as the image in $\ba$ of the set $\{ t \in \lambda : M \models \exists x \bigwedge_{\alpha \in u} R(x,\bar{a}_{v_\alpha}[t]) \}$. 
Note that since $p$ is a type, 
given any $\alpha_0, \dots, \alpha_{n-1}$ the set $\{ \{ \gamma : \gamma \in v_{\alpha_i} \} : i < n \}$ is never  
equal to $[w]^k$ for $w \in [\theta]^n$. Of course, this does 
not a priori prevent $\{ \{ a_\gamma[t] : \gamma \in v_{\alpha_i} \} : i < n \}$ from representing all $k$-element subsets of some $n$-element set in some index model 
with respect to the enveloping ultrapower; this is what we will have to avoid below by dealing not only with elements but with 
their ``collapse''. 
We will fix  p, the $a_\gamma$'s, and the $v_\beta$'s and refer to them throughout the proof. 

It will be very useful to keep track of equalities. For $\gamma < \theta$, call an element $\mx \in \ba^+_\mfa$ \emph{collapsed} for $\gamma$ if for some $\beta \leq \gamma$, 
\begin{equation} \label{e:collapse}
0 < \mx \leq \ma[a_\gamma = a_\beta] \mbox{ but for all } \delta < \beta, \mx \cap \ma[a_\gamma = a_\delta] = 0_{\ba_\mfa}. 
\end{equation}
Towards proving the pattern transfer property, suppose we are given $\langle \ma^2_\alpha : \alpha < \kappa \rangle$ with each $\ma^2_\alpha \in \ba^+_\mfb$. 
Following the normal form lemma \ref{smooth}, 
for each $\alpha < \kappa$ we may choose $\ii_\alpha = ( \mx_\alpha, u_\alpha, n_\alpha, \langle u_{\alpha, \ell} : \ell < n_\alpha \rangle)$ 
so that $\mx_\alpha \leq \mb_{u_\alpha}$ for each $\alpha < \kappa$, and 
\begin{equation} 
\label{1-dot} \ba_\mfb \models  0 < \mx_\alpha \cap \mb^1_{u_\alpha} \cap \bigcap_{\ell < m_\alpha} (- \mb^1_{u_{\alpha,\ell}})  \leq \ma^2_\alpha. 
\end{equation}
Without loss of generality, for each $\alpha < \theta$, $\mx_\alpha$ is collapsed for $\{ a_\gamma : \gamma \in v_\rho, \rho \in u_\alpha \}$. 
To record the effect of collapse, define a function $f: \theta \times \theta \rightarrow \theta$ by $(\alpha, \gamma) \mapsto \beta$ where $\beta$ is from 
equation (\ref{e:collapse}) in the case that $\mx = \mx_\alpha$. (So $\beta$ is well defined, and $\beta \leq \alpha$.)

Since $\kappa$ is regular, and by the $\Delta$-system lemma, we can find $\uu \in [\kappa]^\kappa$ such that: 
\begin{enumerate}
\item $\langle u_\alpha : \alpha \in \uu \rangle$ forms a $\Delta$-system.
\item $\langle P_\alpha : \alpha \in \uu \rangle$ forms a $\Delta$-system, where  $P_\alpha = \{ \gamma \in v_\rho, \rho \in u_\alpha \}$.\footnote{i.e. indices for all elements of $\langle a_\gamma : \gamma < \theta \rangle$ occurring as parameters in $R(x,\bar{a}_{v_\rho})$ for $\rho \in u_\alpha$.}
\item $\langle C_\alpha : \alpha \in \uu \rangle$ forms a $\Delta$-system with heart $C_*$, \\ where 
$C_\alpha = \{ f(\alpha, \gamma) :  \gamma \in v_\rho, \rho \in u_\alpha \}$. \footnote{Recall that 
$\mx_\alpha$ is collapsed for all of the parameters in $P_\alpha$. 
$C_\alpha$ is the set of indices for elements to  
which the elements of $P_\alpha$ collapse on $\mx_\alpha$.  A priori, we may have $|C_\alpha| \leq |P_\alpha|$ if parameters from 
different formulas collapse to the same value.  However, $\{ f(\alpha, \gamma) : \gamma \in v_\rho \}$ is always a set of size $k$, since 
$\mx_\alpha \leq \mb_{u_\alpha} \leq \mb_{ \{ \rho \} }$ for each $\rho \in u_\alpha$, and $R$ is irreflexive.}
\item $\langle g_\alpha \rstr [C_*]^k  : \alpha \in \uu \rangle$ is constant, where $g_\alpha: [C_\alpha]^k \rightarrow \{ 0, 1 \}$ is given as follows. 
If there is some $\rho \in u_\alpha$ such that $\{ f(\alpha, \gamma) : \gamma \in v_\rho \} = \{ \beta_0, \dots, \beta_{k-1} \}$, then 
$g_\alpha( \{ \beta_0, \dots, \beta_{k-1} \}) = 1$, else  $g_\alpha( \{ \beta_0, \dots, \beta_{k-1} \}) = 0$. \footnote{Because of the collapse, the 
key parameters in play for $\mx_\alpha$ are $\{ a_\gamma : \gamma \in C_\alpha \}$. As an example, if $k=2$, it could be that 
$u_\alpha = \{ \rho, \zeta \}$ and $v_\rho = \{ \gamma_1, \gamma_2 \}$ and $v_\zeta = \{ \gamma_3, \gamma_4 \}$. So 
$R(x,\bar{a}_{v_\rho}) = R(x, a_{\gamma_1}, a_{\gamma_2})$ and $R(x,\bar{a}_{v_\zeta}) = R(x, a_{\gamma_3}, a_{\gamma_4})$.  
Suppose $f(\alpha, \gamma_1) = \delta_1 = f(\alpha, \gamma_4)$, $f(\alpha, \gamma_2) = \delta_2$ and $f(\alpha, \gamma_3) = \delta_3$. 
Then even though $\{ \delta_2, \delta_3 \}$ is a two-element subset of $C_\alpha$, neither of our formulas amounts (on $\mx_\alpha$) 
to asserting a connection to $a_{\delta_2}$ and $a_{\delta_3}$. 
So we would have $g_\alpha(\{ \delta_2, \delta_3 \} ) = 0$ whereas, 
for instance, $g_\alpha( \{ \delta_1, \delta_2 \} ) = 1$. Since the type is positive, $0$ here asserts ``no information,'' not negation, which will suffice.}
\end{enumerate}
First let us prove that for any $v \in [\theta]^{<\aleph_0}$, 
\begin{equation}  \label{e:32}
  \bigcap_{\alpha \in v} \mx_\alpha \leq \mb_w 
\end{equation}
where $w = w(v) = \bigcup_{\alpha \in v} u_\alpha$. 
Suppose not. Let $C = \bigcup_{\alpha \in v} C_\alpha$. Then there must be a set $A$ and a function $G$ such that:
\begin{itemize}
\item $A \subseteq C$, $|A| = n$,
\item $G: [A]^k \rightarrow v$ is such that if $B \in [A]^k$ and $G(B) = \alpha$ then  
$B \subseteq C_\alpha$ and $g_\alpha(B) = 1$,  
\item and 
\[ \my := \bigcap_{\alpha \in v} \mx_\alpha \cap \bigcap_{B^\prime \in [A]^{k+1}} \ma[R(B^\prime)]~~ > 0. \]
\end{itemize}
Informally, there is an $n$-tuple of parameters (after collapse) such that $R$-connecting to each $k$-element subset is implied by some $\alpha$, and
there is additionally positive intersection with the ($\ba$-image of a) set where each $(k+1)$-element subset has an $R$-edge, \emph{thus} altogether 
causing $x$ to form a forbidden configuration over $B$.  

Recall that $C_*$ was the heart of the $\Delta$-system $\langle C_\alpha : \alpha \in \uu \rangle$. 

First observe that the range of $G$ cannot have size $1$, as each $\mx_\alpha \leq \mb_{u_\alpha}$.  Second, observe that we cannot have 
$A \subseteq C_*$. This is because $C_* \subseteq C_\alpha$ for each $\alpha$ and 
$\langle g_\alpha : \alpha \in \uu \rangle$ is constant on $[C_*]^k$, so for each $B \in [C_*]^k$, 
if $G(B) = \alpha$ for some $\alpha \in v$ then $g_\alpha(B)$ for every $\alpha \in v$ and so we have a contradiction already on a positive set 
for some (every) $u_\alpha$, contradicting $\mx_\alpha \leq \mb_{u_\alpha}$. 

Suppose for a contradiction that $A \not\subseteq C_*$. There are two cases. In the first case, suppose we can find $i \neq j$ such that $G(B_i) = \alpha_i$, 
$G(B_j) = \alpha_j$, and there are elements $b_i \in B_{i} \setminus C_{\alpha_j}$, $b_j \in B_j \setminus C_{\alpha_i}$, so necessarily 
$b_i, b_j \notin C_*$.  Then $G$ cannot be well defined on any 
$B_\ell \supseteq \{ b_i, b_j \}$, since if $G(B_\ell) = \alpha_\ell$, say, then either $\ell \neq i$ or $\ell \neq j$, say the first, and then 
$b_i \in C_{\alpha_i} \cap C_{\alpha_\ell}$ implies $b_i \in C_*$ by definition of $\Delta$-system, thus $b_i \in C_{\alpha_j}$, contradicting our choice of $b_i$. 
So this cannot happen.  
In the second case, for at most one $\alpha$ (though possibly more $B$s) does it happen that $G(B) = \alpha$ and $B \subseteq C_\alpha$, $B \not\subseteq C_*$. Then 
$A \subseteq C_\alpha$, contradicting $\mx_\alpha \leq \mb_{u_\alpha}$. 

We have ruled out all possible cases, so this proves equation (\ref{e:32}).

Let us verify \ref{paircc}(4)(f) holds for $A = \emptyset$, $j = 0$ when $\bar{\ma}^2$ is the sequence from the beginning of the proof and the role of 
$\bar{\ma}^1$ is played by $\langle \mx_\alpha : \alpha < \theta \rangle$. 
Suppose we are given $n \in \omega \setminus A$, a finite $u \subseteq \uu$, and a nonzero $\ma_* \in \ba^+_\mfa$ such that 
$m_n / (m^\circ_n)^{n^i} < |u| < m_n$ and \begin{equation} \label{e155}
 \ba_\mfa \models 0 < \ma_* \leq \bigcap_{\alpha \in u} \ma^1_\alpha.  
\end{equation}
To fulfill the $\kim$-pattern transfer (in a quite strong way) it would suffice to show that already for 
$v = u$,
\begin{equation} \label{e156a} \ba_2 \models \bigcap_{\alpha \in v} \ma^2_\alpha ~ \cap ~ \ma_* > 0.  
\end{equation}
By the comment on normal form \ref{smooth}(4), to prove (\ref{e156a})  it would suffice to show that  
\begin{equation} \label{e156}
 \ba_\mfb \models \bigcap_{\alpha \in v} (\mx_\alpha \cap \mb^1_{u_\alpha}) \cap \ma_* > 0. 
\end{equation} 
For this, in turn, it suffices to show that $\bigcap_{\alpha \in v} \mx_\alpha \leq \mb_w$, where $w = w(v)$, 
as we have already done in equation (\ref{e:32}).   Why is this sufficient? 
We can define $\hat{h}_w $ an endomorphism from 
$\ba_\mfb$ onto $\ba_\mfa$ which is the identity on $\ba_\mfa$ and for each $\alpha \in v$ takes  
$\mx_\alpha \cap \mb^1_{u_\alpha} = \mx_\alpha \cap \bigcap_{\gamma \in u_\alpha} \mb^1_\gamma$ 
to $\mx_\alpha \cap \mb_w$.  Now $\ma_* \leq \bigcap \mx_\alpha \leq \mb_w$ tells us that 
\[  \hat{h}_w\left(\ma_{*} \cap \bigcap_{\alpha \in v} (\mx_\alpha \cap \mb^1_{u_\alpha})\right) = 
\ma_{*} \cap \bigcap_{\alpha \in v} \mx_\alpha \cap \mb_w = \ma_{*} > 0 \]
and equation (\ref{e156}) follows. 

This completes the proof of the $\kim$-pattern transfer, and so the proof of the Claim. 
\end{proof}

\begin{theorem} \label{t:dir2}  Suppose $\mu < \kappa \leq \lambda$ are infinite cardinals and $\kappa$ is regular. Suppose $T_0 = T_{n,k}$ for 
some $2 \leq k < n < \omega$ and suppose $T_1 = T_\xn$ where $\xn = \xn[\bar{m}, \bar{E}, \xi]$.  Then there is a regular ultrafilter $\de$ on $\lambda$ such that 
$\de$ is good for $T_0$ and not for $T_\xn$. 
\end{theorem}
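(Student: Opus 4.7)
The plan is to build a Boolean algebra $\ba_*$ of size $\leq 2^\lambda$ and an ultrafilter $\de_*$ on it by a $2^\lambda$-stage construction sequence in the sense of \ref{d:constr}, then extract the desired regular ultrafilter $\de$ on $\lambda$ via separation of variables. Fix the fast sequence $\bar{m}$ from the parameter $\xn$, and choose an ideal $\mci$ on $\omega$ extending $[\omega]^{<\aleph_0}$ such that $\xi^{-1}\{1\} \neq \emptyset \mod \mci$ (for instance $\mci = [\omega]^{<\aleph_0}$ works, since $\xi$ is $1$ infinitely often by the standing assumption on $\xn$); these determine the chain condition parameters $\kim$ that I would carry throughout the induction.

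First I would begin with $\ba_0 = \ba^1_{\alpha_0, \mu, \aleph_0}$ for some $\alpha_0 \geq 2^\lambda$, and let $\de_0$ be any nonprincipal ultrafilter on $\ba_0$. Then $\ba_0$ has the $\kim$-c.c. by \cite{MiSh:1167} 8.5. At each successor stage $\alpha = \beta + 1$, I would pass to a $(\lambda, T, \bar{\mb})$-extension in the sense of \ref{d:extn}, where $T$ is either $\trg$ or $T_{n,k}$, and $\bar{\mb}$ is chosen from an enumeration that, following the book-keeping of Conclusion \ref{c:constr}(c), exhausts every $\trg$-possibility pattern and every \emph{positive} $T_{n,k}$-possibility pattern (coming from a type $\{R(x,\bar{a}_{v_\beta}) : \beta < \lambda\}$ with $|v_\beta| = k$) arising in the final algebra. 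At limit stages, take the completion of the union.

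The two points requiring verification are: (i) the $\kim$-c.c.\ is preserved at every stage, and (ii) the resulting $\de_*$ is moral for $T_{n,k}$. For (i), at successor stages it suffices to establish the $\kim$-pattern transfer property of \ref{paircc}: for $\trg$-extensions this was already shown in \cite{MiSh:1167} (this is the core of the main construction there), and for positive $T_{n,k}$-extensions this is exactly Lemma \ref{l:add-tnk-solution} of the present section; at limit stages, preservation follows by a Fodor/$\Delta$-system argument mirroring the limit case of the proof of \ref{c:42} (cf.\ \cite{MiSh:1167} 8.18). For (ii), the book-keeping arranges that every relevant pattern for $\trg$ and every relevant positive pattern for $T_{n,k}$ is eventually solved; as explained in the introduction to this section and in \cite{MiSh:1206}, morality for $\trg$ together with morality for positive $R$-type problems of $T_{n,k}$ already implies morality for $T_{n,k}$, because the negative instances and the equalities/inequalities of parameters in a general $T_{n,k}$-type are absorbed by $\trg$, while the forbidden configuration concerns only positive edges.

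Finally, separation of variables yields a regular ultrafilter $\de$ on $\lambda$ built from $(\de_0', \ba_*, \de_*, \jj)$ in the sense of \ref{d:built}, with $\de_0'$ a regular $\lambda^+$-good filter on $\lambda$ and $\jj: \mcp(\lambda) \to \ba_*$ a suitable surjective homomorphism. Saturation of $T_{n,k}$-ultrapowers follows from morality. In the other direction, since $\ba^1_{\kappa, \mu, \aleph_0} \lessdot \ba_*$ and $\ba_*$ has the $\kim$-c.c.\ (hence the pattern transfer property of \ref{paircc}), \cite{MiSh:1167} Lemma 9.4 applies and $\de$ fails to be good for $T_\xn$. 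The main obstacle in this program is exactly clause (i) in the positive $T_{n,k}$ case, i.e.\ the content of Lemma \ref{l:add-tnk-solution}; a secondary, but conceptually important, point is the reduction in (ii) from positive $T_{n,k}$-morality plus $\trg$-morality to full $T_{n,k}$-morality, which rests on the combinatorial simplicity of $T_{n,k}$ and its axiom scheme.
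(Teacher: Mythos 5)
Your proposal follows essentially the same route as the paper: the paper proves Theorem \ref{t:dir2} (and its strengthening, Conclusion \ref{c:6.6}) by running the $\kim$-c.c.-preserving construction of \cite{MiSh:1167} while interleaving formal solutions to positive $T_{n,k}$-problems, with Lemma \ref{l:add-tnk-solution} supplying exactly the pattern-transfer step you identify as the main obstacle, and \cite{MiSh:1167} Lemma 9.4 giving non-goodness for $T_\xn$. Your reduction of full $T_{n,k}$-morality to $\trg$-morality plus positive-type morality is also the paper's stated justification, so the argument is correct and matches the intended proof.
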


Now we explain how this result easily gives something more general. 
In \cite{MiSh:1167} a family of continuum many theories $\{ T_\xm : \xm \in \bM_* \}$ were produced and it was shown that for 
an partition of $\bM_*$ into $\mcm$ and $\mcn$, or just $\mcm$, $\mcn$ any two disjoint subsets of $\bM_*$, it is possible to construct 
$(\ba, \de)$ satisfying this c.c. which is good for the theory of the random graph and for all $T_\xm$ $(\xm \in \mcm)$ 
but for no $T_\xn$ ($\xn \in \mcn$).  These theories had in common $\bar{m}$ and $\bar{E}$ and differed on $\xi$ which was chosen from an 
independent family of functions in the sense of Engelking-Kar\l owicz \cite{ek} (see \cite{MiSh:1167}, Fact 6.20 and Corollary 6.22).  
The choice of $\mcm, \mcn$ would affect the choice of 
ideal $\mci$ used in \ref{d:1167-cc}, and this was in fact a characterization, see \cite{MiSh:1167} Theorem 11.10.  

In our context,  Lemma \ref{l:add-tnk-solution} means that we can 
ensure any such $\de$ is, in addition, good for $T_{n,k}$, simply by interleaving into the inductive construction steps which 
handle positive $T_{n,k}$-types. So in fact we get $T_\xn \not\tlf T_{n,k}$ for any $\xn \in \bM_*$, 
along with naturally stronger statements reflecting that we can deal with many $(n,k)$ and also freely partition $\bM_*$, as we state now.

\begin{concl} \label{c:6.6} 
Suppose $\mu < \kappa \leq \lambda$ are infinite cardinals and $\kappa$ is regular. 
Let $\{ T_\xm : \xm \in \bM \}$ be the sequence of continuum many independent theories from \cite{MiSh:1167}. 
For any disjoint $\mcm, \mcn \subseteq \bM$ there is a regular ultrafilter $\de$ on $\lambda$ such that 
$\de$ is:
\begin{enumerate}
\item good for the theory of the random graph, 
\item good for every $T_\xm$ with $\xm \in \mcm$, 
\item not good for any $T_\xn$ with $\xn \in \mcn$, 
\item good for $T_{n,k}$ for every $2 \leq k < n < \omega$. 
\end{enumerate}
\end{concl}

\begin{proof}
Follow the proof of \cite{MiSh:1167} Conclusion 10.25, except that in step 4 of that proof 
also interleave adding solutions to positive $R$-types for any or all theories $T_{n,k}$ as 
desired. This preserves pattern transfer by Lemma \ref{l:add-tnk-solution}, and thus the $\kim$-c.c., which suffices.  
\end{proof}

\begin{rmk}
We may replace $\ref{c:6.6}(4)$ by, for a given $\bk \geq 2$, 
\begin{enumerate} 
\item[(4)$^\prime$] if $2 \leq n <\omega$ then $\de$ is good for $T_{n,k}$ iff $k \geq \bk$.
\end{enumerate}
\end{rmk}

\vspace{3mm}

\section{Flexibility and the c.c.}  \label{s:flex-cc}

In this section we show that no regular ultrafilter built by separation of variables where the Boolean algebra $\ba$
has the $\mu$-c.c. for some regular uncountable $\mu < \lambda$ can be flexible. 
This conclusively settles a question addressed for free Boolean algebras in various earlier theorems. \footnote{\cite{Sh:c} Claim 3.23 
p. 364 proves that ultrafilter built by independent families of functions of small range will not be good 
(really, will not be flexible, avant la lettre).  Another proof for independent families was given in 
\cite{MiSh:997} \S 5, and updated and applied to separation of variables in the non-saturation argument of 
\cite{MiSh:999} Corollary 8.9, still only for free Boolean algebras.}  
By contrast, here the only assumptions on $\ba$ are the c.c. and  
those inherited from separation of variables: $\ba$ is complete, of size $\leq 2^\lambda$, and [instead of the 
$\lambda^+$-c.c. in the usual separation of variables] has the $\mu$-c.c. for some regular uncountable $\mu < \lambda$. 

This is a nice occasion to re-recall with appreciation that Kunen some years ago, on reading the definition of 
flexibility in \cite{mm4}, brought to our (MM's) attention the definition ``OK'' (as noted in e.g. \cite{MiSh:996}), thus setting the stage for some 
nice connections to Dow's problem of constructing ultrafilters which are OK (flexible) and not good. The first open problem in the 
next section relates to this. 

Let us recall the definition of flexibility. Suppose $\{ X_\alpha : \alpha < \lambda \}$ is a regularizing family in the ultrafilter $\de$ on $I$, $|I| = \lambda$, that is, 
the intersection of any finitely many $X_\alpha$'s belongs to $\de$ but the intersection of any infinitely many $X_\alpha$'s is empty.  
This is equivalent to saying that any $t \in I$ belongs to only finitely many $X_\alpha$'s. So we can naturally associate a ``size'' to any such regularizing family, 
namely, the nonstandard natural number $\prod_t n_t/\de$ where $n_t = | \{ \alpha < \lambda : t \in X_\alpha \} |$. Flexibility asks, informally, for regularizing families of 
arbitrarily small (nonstandard) size.

\begin{defn}[\cite{mm4}, Definition 8.2] 
Say the regular ultrafilter $\de$ on $I$, $|I| = \lambda$ is \emph{flexible} if for any $\de$-nonstandard $n_*$, there is a regularizing family $\{ X_\alpha : \alpha < \lambda \}$ 
with $| \{ \alpha < \lambda : t \in X_\alpha \} | \leq n_*[t]$ for $\de$-almost all $t \in I$.
\end{defn}

We now prove the section's main theorem. 

\begin{rmk}
Theorem $\ref{small-cc}$ is optimal: already when $\ba$ is free, if the ultrafilter on $\ba$ is $\aleph_1$-complete, the resulting 
regular ultrafilter can be flexible, \cite{MiSh:996} Corollary 6.3. 
\end{rmk}

\begin{theorem}  \label{small-cc}
Suppose $\mu$ is regular and uncountable. 
Suppose $\de_1$ is built from $(\de_0, \jj, \ba, \de)$ where $\de_0$ is an excellent regular filter on $\lambda$, 
$\ba$ satisfies the $\mu$-c.c., and $\de$ is not $\aleph_1$-complete.  Then $\de_1$ is not $\mu$-flexible. 
\end{theorem}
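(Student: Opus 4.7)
The plan is to use the failure of $\aleph_1$-completeness of $\de$ to build a function $f:\lambda\to\omega$ tending to infinity modulo $\de_1$ which no $\mu$-regularizing subfamily of $\de_1$ can dominate, with the contradiction arising via a Ramsey-style consequence of the $\mu$-c.c.

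First I will extract from the failure of $\aleph_1$-completeness a pairwise disjoint countable sequence $\langle \mb_n : n<\omega\rangle$ in $\ba$ with each $\mb_n\notin\de$ but $\bigvee_n\mb_n\in\de$: take a decreasing sequence $\langle e_n\rangle$ in $\de$ with $\bigwedge_n e_n\notin\de$, and set $\mb_n := e_n \wedge -e_{n+1}$, which is pairwise disjoint, each term is disjoint from $e_{n+1}\in\de$ hence not in $\de$, while $\bigvee_n\mb_n = e_0 \wedge -\bigwedge_n e_n\in\de$. Using that $\jj:\mcp(\lambda)\to\ba$ is a surjective homomorphism, I then lift to pairwise disjoint $\widehat{\mb}_n\subseteq\lambda$ with $\jj(\widehat{\mb}_n)=\mb_n$ (pick preimages and inductively subtract the earlier ones, which lie in $\ker\jj$), and define $f(t)=n$ for $t\in\widehat{\mb}_n$ and $f(t)=0$ elsewhere. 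Since $\jj(\bigcup_{n\geq k}\widehat{\mb}_n)\geq\bigvee_{n\geq k}\mb_n\in\de$, the set $\{t : f(t)\geq k\}$ is in $\de_1$ for every $k$, so $f/\de_1\to\infty$.

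Next I will suppose for contradiction that $\{X_\alpha : \alpha<\mu\}\subseteq\de_1$ is a $\mu$-flexibility witness relative to $f$, meaning $|\{\alpha : t\in X_\alpha\}|\leq f(t)$ for every $t$, and set $\mz_\alpha:=\jj(X_\alpha)\in\de$. The flexibility hypothesis forces $\bigcap_i X_{\alpha_i}\cap\widehat{\mb}_n=\emptyset$ whenever $\alpha_0,\dots,\alpha_n$ are distinct (else some $t\in\widehat{\mb}_n$ would lie in at least $n+1>f(t)$ of the $X_\alpha$'s), and applying $\jj$ yields
\[ \bigwedge_{i\leq n}\mz_{\alpha_i}\wedge\mb_n = 0_\ba \]
for every such configuration. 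Since each $\mz_\alpha\in\de$ must meet $\bigvee_n\mb_n\in\de$, some $\mb_{n(\alpha)}$ meets $\mz_\alpha$ nontrivially; the regularity and uncountability of $\mu$ then let me pigeonhole on $\omega$ to freeze a single $n^*<\omega$ and a set $U\in[\mu]^\mu$ for which $\my_\alpha:=\mz_\alpha\wedge\mb_{n^*}\neq 0$ for all $\alpha\in U$. The displayed equality specializes to: every $(n^*+1)$-subfamily of $\{\my_\alpha : \alpha\in U\}$ has zero meet in $\ba$.

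The main obstacle is now to derive a contradiction with the $\mu$-c.c.\ from this combinatorial datum, via the following Ramsey-type lemma which I will prove by induction on $k\geq 2$: under the $\mu$-c.c.\ (with $\mu$ regular uncountable) $\ba$ contains no family of $\mu$ nonzero elements in which every $k$-subfamily meets to $0$. The base case $k=2$ is the $\mu$-c.c.\ itself. For the induction step, given such a family $\{\my_\alpha : \alpha<\mu\}$ with every $(k{+}1)$-subfamily meeting to zero, I split on whether some index $\alpha^*$ has $\mu$-many compatible partners $\beta$ (with $\my_{\alpha^*}\wedge\my_\beta\neq 0$). In the first case the elements $\my_{\alpha^*}\wedge\my_\beta$ for such $\beta$ are $\mu$ nonzero and every $k$-subfamily among them has zero meet --- since adjoining $\alpha^*$ gives a $(k{+}1)$-subfamily of the original --- contradicting the inductive hypothesis. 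In the second case each $\alpha$ has fewer than $\mu$ compatible partners, and a greedy recursion using the regularity of $\mu$ extracts an antichain of size $\mu$, contradicting the $\mu$-c.c.\ directly. Applying the lemma at $k=n^*+1$ finishes the proof that $\de_1$ is not $\mu$-flexible.
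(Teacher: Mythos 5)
Your proof is correct and follows essentially the same route as the paper's: extract a countable antichain (here, a disjoint partition) outside $\de$ from the failure of $\aleph_1$-completeness, pigeonhole a putative solution onto a single piece $\ma_{n^*}$ to obtain $\mu$ nonzero elements all of whose $(n^*+1)$-wise meets vanish, and refute this against the $\mu$-c.c.\ via the same dichotomy (either some element has $\mu$-many compatible partners, or one extracts a $\mu$-sized antichain). The only differences are cosmetic: you package the combinatorial core as a standalone lemma proved by induction on $k$ and use a greedy recursion in place of the paper's club-based antichain extraction, and you phrase the flexibility problem on the index set $\lambda$ and push it down along $\jj$, whereas the paper works directly in $\ba$.
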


\begin{proof}
As $\ba$ is not $\aleph_1$-complete, there is a maximal antichain $\langle \ma_n : n < \omega \rangle$ with each $\ma_n \notin \de$. 
We may define a flexibility problem $f$ by: for each $u \in [\mu]^{<\aleph_0}$, let $f(u) = \bigcup \{ \ma_\ell : \ell > |u| \}$.

Assume for a contradiction that $\langle \mb_\alpha : \alpha < \mu \rangle$ is a solution of $f$. 
For each $\alpha <\mu$, for some $n = n(\alpha) < \omega$, 
\[ \mb^\prime_\alpha = \mb_\alpha \cap \ma_n > 0. \]
Define $S_n = \{ \alpha : n(\alpha) = n \}$. So 
\begin{equation}
\label{eq-n}
\mbox{if } u \subseteq S_n \mbox{ and }\bigcap_{\alpha \in u} \mb^\prime_\alpha > 0 \mbox{ necessarily }|u| \leq n.
\end{equation}  As $\mu$ is regular and uncountable, for some $m$ we have that $|S_{m}| = \mu$.  
Fix this $m$ for the rest of the proof. 

Now we try to choose $\alpha_\ell \in S_{m}$ by induction on $\ell$ so that $\langle \alpha_i : i < \ell \rangle$ is strictly increasing 
(or just without repetition) so that $\bigcap_{i<\ell} \mb^\prime_{\alpha_i} > 0$ and so that 
\[ S_{m, \ell} = \{ \beta ~ :~   \bigcup_{i < \ell} \alpha_i < \beta, \beta \in S_m \mbox{ and } 
\mb^\prime_\beta \cap \bigcap_{i<\ell} \mb^\prime_{\alpha_i}   > 0 \} \mbox{ has size $\mu$.} \]
For $\ell=0$ this holds trivially: $S_{m,0} = S_m$ which has size $\mu$.   Assume $\alpha_0, \dots, \alpha_{\ell-1}$ have been chosen. 
For each $\beta \in S_{m,\ell-1}$ let $\mb^{[\ell]}_\beta = \mb^\prime_\beta \cap \bigcap_{i<\ell} \mb^\prime_{\alpha_i}$.
Suppose that for some $\beta \in S_{m,\ell-1}$ 
there exist $\mu$ elements $\gamma \in S_{m,\ell-1}$ with 
\[ \mb^{[\ell]}_\beta \cap \mb^{[\ell]}_\gamma > 0. \]
If there is such a $\beta$, it can serve as $\alpha_\ell$ and we finish the inductive step. If not, we can build an antichain of 
size $\mu$ as follows: for every $\beta \in S_{m,\ell-1}$ there is 
$\gamma_\beta \in (\beta, \mu)$ such that 
\[ \gamma_\beta \leq \gamma \in S_{m,\ell-1} \implies \mb^{[\ell]}_\beta \cap \mb^{[\ell]}_\gamma = 0_{\ba}. \]
Let $E_1 = \{ \delta < \mu : \delta $ limit and $ (\forall \beta \in S_{m,\ell-1} \cap \delta) (\gamma_\beta < \delta) \}$, so 
$E_1$ is a club.  Also $E_2 = \{ \delta < \mu : \delta $limit$, \delta = \operatorname{sup}(S_{m,\ell-1} \cap \delta) \}$ 
is a club of $\mu$ because $|S_{m,\ell-1}|=\mu$ by inductive hypothesis. Let $E = E_1 \cap E_2$. For $\delta \in E$, let 
$\beta_\delta = \min (S_{n,\ell} \setminus \delta)$, so $\beta_\delta < \min (E \setminus (\delta+1))$. So 
$\langle \mb^{[\ell]}_{\beta_\delta} : \delta \in E \rangle$ is an antichain of $\ba$ of cardinality $\mu$, contradicting the 
$\mu$-c.c. 

This contradiction shows we can carry the induction for any finite $\ell$, but for $\ell>n$ we contradict equation 
(\ref{eq-n}), which completes the proof. 
\end{proof}

\begin{disc} \label{d35}
\emph{Which are the functions $f \in {^\lambda \omega}$ for which there is a regularizing family? They are the functions such that 
for all $n<\omega$, $f^{-1}\{ n \} = \emptyset \mod \de_1$. Recall that the set of such $f$s is a co-initial segment of 
$\mathbb{N}^\lambda/\de$.}
\end{disc}

\begin{disc} \label{d:infty-class}
\emph{Thus for any $\kappa^+ < \lambda$, we have a natural notion of the ``$\infty$-class'', that is, the theories $T$ for which there is no 
$(\kappa^+, T)$-moral $\de \in \operatorname{uf}(\ba)$ when $\ba$ has the $\kappa^+$-c.c. 
This includes all non-low simple theories and all non-simple theories.}  
\end{disc}

\begin{disc} \emph{Thesis: we should now look at morality before considering goodness.  Also, the case of the free Boolean algebras 
used in various earlier papers, while fine and interesting, now seem perhaps less natural in the big picture.}
\end{disc}

\begin{rmk}
Prior to Theorem $\ref{small-cc}$, our state of knowledge on the ``saturation separation'' of theories like $T_{n,k}$ from non-simple theories like 
$\tfeq$ was less clear, i.e.,  looking for regular ultrafilters where the first kind of theories are $\kappa$-saturated and the second set 
not $\mu$-saturated for $\kappa > \mu$ are there any a priori constraints on the distance of $\lambda$ and $\mu$. 
Now we see that indeed they can be arbitrarily far apart, although we still do not know that the $T_{n,k}$'s are always above $\tfeq$, see 
$\ref{q:tfeq}$ below. 
\end{rmk}

\begin{rmk}
Theorem $\ref{small-cc}$ also seems relevant to the questions raised in \cite{MiSh:1167} Discussion $13.7$ on  
the possible variation of regularizing families in regular ultrafilters. 
\end{rmk}

\br

\section{Some open questions}

\begin{qst} \label{q:tfeq}
Is $T_{n,k} \tlf T_{\operatorname{feq}}$?
\end{qst}

Either direction could be interesting. A positive answer in ZFC could make progress towards 
proving that simplicity is a ZFC dividing line in Keisler's order (we know by \cite{MiSh:1030} that it is a dividing line assuming 
existence of a supercompact cardinal). A negative answer in ZFC might address a 1985 problem of Dow \cite{dow} by providing an 
example of an ultrafilter which is more flexible than good. This is because any regular ultrafilter which is good for $T_{\operatorname{feq}}$  
must be flexible \cite{mm4}, but if it is not good for some $T_{n,k}$, it is not good. 
See also \cite{MiSh:1070}, \S 1, for a precise discussion of Dow's problem, stated as Question 1.3 there.  

\begin{qst}
Among the ``$<\infty$'' theories in the sense of Discussion $\ref{d:infty-class}$, is there a maximal one? 
\end{qst}

\begin{qst}
Are the theories $T_{n,k}$ incomparable with each other as $n,k$ vary? 
\end{qst}

\begin{qst}
Is Keisler's order absolute?
\end{qst}

\vspace{3mm}

\newpage

\section{Appendix: Existing evidence for independence} \label{s:indep}

The main constructions of this paper have shown that the theories $T_\xm$ and $T_{n,k}$ are incomparable in Keisler's order. To exposit 
some of the considerations in saturating these theories, we sketch two proofs we could have given in this vein by assembling existing results in 
 a perhaps less satisfactory way (and one which does not address Question \ref{q0.2}).   They may be of interest to readers who have followed 
 prior work, or who are thinking about how to extend existing techniques. However, to be clear, the arguments below are 
superceded by the main theorems just given. 

\subsection*{Analysis of the $T_\xm$s.} 
On one hand, any regular ultrafilter which is flexible and good for the theory of the random graph is good for any $T_\xm$, by \cite{MiSh:1206} Corollary 5.4. 
This proof proceeds by isolating a finite cover property-like property of the theories $T_\xm$, \cite{MiSh:1206} Definition 4.1, and 
showing that the combination of flexibility to handle this fcp-like property, along with goodness for $\trg$, the minimum unstable theory, to handle the independence property,
is sufficient to deal with $T_\xm$.\footnote{This appears to be a strong statement about the simplicity of the $T_\xm$'s, but may in fact be a statement about the 
ability of forking, here appearing via flexibility, to ``drown out'' the complexity of independence.} 

On the other hand, if $\ba = \ba^1_{\alpha_*, \mu, \aleph_0}$ and $\de$ is a nonprincipal ultrafilter on $\ba$ 
then no subsequent ultrafilter built from $(\ba, \de)$ by separation of variables can be good for any 
$T_\xm$.  The prototype for this argument is \cite{MiSh:1140}, Lemma 3.2.  The deeper explanation is in  
\cite{MiSh:1167} Lemma 9.4. Essentially, what can threaten saturation of a $T_\xm$ is the following. Suppose we have, as part of a type in the ultrapower, 
a finite set of conditions each asserting that $x$ should $R$-connect to some $a_i$. Because this is a type in the ultrapower, the $a_i$'s may belong in the ultrapower 
to the same predicates, or compatible ones.  However, it could happen in the projection to the $t$-th index model that the $a_i[t]$'s fall across predicates 
$P_{\nu^\smallfrown \langle j \rangle}$ for a set of $j$'s which is ``large'' in the sense of $n = \lgn(\nu)$, and so it is not possible to connect there to all of them, 
or indeed even to many of them, simultaneously.  One reason why (completions of) a free Boolean algebra will therefore cause problems for this theory is explained by the 
chain condition \cite{MiSh:1167} 8.2, which holds of this $\ba$ by \cite{MiSh:1167} 8.5, as well as for a wider class of Boolean algebras which need not be free. 
Informally, it expresses a requirement that among any large set of elements (for varying finite notions of large) there is 
an only slightly less large subset with nonzero common intersection.  This c.c. was stated and applied in \S \ref{s:second}. 
(An alternative avenue is in the second argument on the next page.) 

\subsection*{Analysis of the $T_{k+1,k}$s.}  Recall that Kuratowski and Sierpi\'{n}ski characterized the $\aleph_n$'s by existence of free sets in 
set mappings, see \cite{ehmr} or \cite{MiSh:1050} \S 1, or further details in \S \ref{s:n-s}.  
This turns out to have a close connection to the problem of saturating the theories $T_{k+1,k}$.  These theories can be thought of as 
posing fairly pure amalgamation problems: if we let $\bar{a} = \langle a_0, \dots, a_k \rangle$ be a sequence of distinct elements, 
and let $\bar{a}_i$ denote the $k$-element subsequence 
in which $a_i$ is omitted, then $\{ R(x,\bar{a}_i) : 0 \leq i \leq k \}$ is consistent if and only if 
$\neg R(\bar{a})$.  But even if this forbids certain instances of $R$ on parameters for the type in the ultrapower, 
it may be that an instance of $R$ appears on $\{ a_0[t], \dots, a_k[t] \}$ in the projection to the $t$-th index model, thus 
blocking realization there.  Whether the appearance of such an $R$ can be controlled by conditions on the strictly smaller $\bar{a}_i$ is similar to the question of 
existence of a set of size $(k+1)$ in a set mapping which is free, informally, not controlled by any of its $k$-element subsets.  

\cite{MiSh:1050} carried out this idea fully to prove the following. 
Suppose $\de$ is a regular ultrafilter on $\lambda \geq \mu \geq \aleph_0$ built by separation of variables and $\ba = \ba^1_{2^\lambda, \mu, \aleph_0}$. 
Suppose that in addition $\lambda = \aleph_{\alpha + \ell}$ and $\mu = \aleph_\alpha$ and $2 \leq \ell < \omega$.  Then, for $k < \ell$, $\de$ is not good for $T_{k+1,k}$.  
Here $n = k+1$ is used in an essential way: the bottleneck is \cite{MiSh:1050} Subclaim 5.3. 
However, $\de$ may be chosen to be good for all $T_{k+1,k}$ where $k > \ell$.  The so-called $(\lambda, \mu)$-perfected ultrafilters of \cite{MiSh:1050}
Definition 3.10-3.11 suffice. 
Informally, these are ``as good as possible'' modulo the constraint that $\ba$ has a much smaller c.c., $\mu^+$, than the $\lambda^+$ which 
would be expected in $\mcp(\lambda)$.  

Let us now assemble this understanding to give two proofs of incomparability: 

\subsection*{First argument: $T_\xm$ and $T_{k+1,k}$ are incomparable, using a large cardinal.}   This is \cite{MiSh:1206}, Conclusion 5.8, but we sketch the argument, 
which builds on the first (non-ZFC) incomparable classes in Keisler's order (see the independent works \cite{ulrich} and \cite{MiSh:1124}).  
Let $(\lambda, \mu, \theta, \sigma)$ be suitable, so in particular they are decreasing in size, and suppose $\sigma$ is uncountable and supercompact. 
Suppose $\lambda = \aleph_{\alpha + \ell}$ and $\mu = \aleph_{\alpha}$.  Suppose $\ba = \ba^1_{2^\lambda, \mu, \theta}$. Suppose $\de$ is a so-called  
\emph{optimized} ultrafilter built with this data by separation of variables. Then $\de$ is flexible and good for the random graph, so is good for any $T_\xm$. 
However, $\de$ is not good for $T_{k+1,k}$ for any $2 \leq k < \ell$.   This shows $T_{k+1,k} \not \tlf T_\xm$ for any such $k, \xm$. 
If instead of the large cardinal assumption we suppose $\theta = \sigma = \aleph_0$ and choose $\de$ to be $(\lambda, \mu)$-perfected, then 
$\de$ will not be good for any $T_{\xm}$, but it will be good for $T_{k+1,k}$ for any $k > \ell$. This shows $T_\xm \not \tlf T_{k+1,k}$. 

\subsection*{Second argument: $T_\xm$ and $T_{k+1,k}$ are incomparable in ZFC, using possibly uncountable $\theta$.}
We may give a proof of incomparability in ZFC by a more subtle attention to the cardinal $\theta$, the depth of intersection in $\ba = \ba^1_{2^\lambda, \mu, \theta}$. 
Suppose $\lambda = \aleph_{\alpha+\ell} > \mu = \aleph_\alpha \geq 2^{\aleph_0}$, and we will consider the cases $\theta = \aleph_0$ and $\theta = \aleph_1$ 
[here always $\sigma = \aleph_0$]. 
Suppose $\theta = \aleph_0$ and $\de$ is $(\lambda, \mu)$-perfected. Then for any $k > \ell$, $\de$ will be good for $T_{k+1,k}$, but it will not be good for 
$T_\xm$.  So $T_\xm \not \tlf T_{k+1,k}$. 

On the other hand, we may adapt the definition of $(\lambda, \mu)$-perfected ultrafilters to allow for uncountable $\theta$: 
see \cite{MiSh:1140}, Definition 5.3. 
Let $\de$ be perfected for our given $\lambda$, $\mu$, $\theta = \aleph_1$, and $\sigma = \aleph_0$. 
Then $\ell$, the integer distance between $\lambda$ and $\mu$, still controls the saturation or non-saturation of the $T_{k+1,k}$.  So $\de$ is not good for 
$T_{k+1,k}$ when $k < \ell$. 
To finish the analysis, it would be possible to show, by closely following the proof of  \cite{MiSh:1140} Theorem 5.5 [which deals with precursors to the $T_\xm$'s, called 
$T_f$] that $\de$ is good for $T_\xm$.   
So $T_{k+1,k} \not\tlf T_\xm$. 
 
\vspace{2mm}

The proofs we have chosen to focus on in the main text above strengthen this in several ways: they are in ZFC, 
they use $\theta = \aleph_0$, and they apply to $T_{n,k}$ for any $n > k \geq 2$, not requiring $n = k+1$.   

This said, what is to us 
even more interesting about the present paper is the substantially different approach, involving tailor-made Boolean algebras 
which are not free, and whose structure comes to reflect, via the chain conditions, certain 
aspects of the Boolean algebra of formulas in the theories in question. 
Although Keisler's order has long been about two kinds of Boolean algebras (the obvious one, and the one of formulas) and 
two kinds of ultrafilters on them (the obvious ones, and types), the interaction between the two remained mysterious. 
These constructions begin to close that gap.

\end{document}